\newcommand{\bY}{\mathbb{Y}}
\newcommand{\be}{\begin{equation}}
\newcommand{\ee}{\end{equation}}
\newcommand{\bes}{\begin{equation*}}
\newcommand{\ees}{\end{equation*}}
\renewcommand{\leq}{\leqslant}
\renewcommand{\geq}{\geqslant}
\renewcommand{\div}{\operatorname{div}}
\newcommand{\trace}{\operatorname{tr}}
\newcommand{\as}{\operatorname{as}}
\newcommand{\dist}{\operatorname{dist}}
\providecommand{\R}{\mathbb{R}}
\providecommand{\N}{\mathbb{N}}
\providecommand{\eps}{\varepsilon}
\def\bX{{\bf X}}
\newcommand\cS{\mathcal{S}}
\newcommand\cW{\mathcal{W}}
\newcommand\cX{\mathcal{X}}
\renewcommand{\leq}{\leqslant}
\renewcommand{\geq}{\geqslant}
\renewcommand{\div}{\operatorname{div}}
\newcommand{\curl}{\operatorname{curl}}
\newtheorem{Theorem}{Theorem}
\newtheorem{Definition}{Definition}
\newtheorem{Lemma}{Lemma}
\newtheorem{Remark}{Remark}
\newtheorem{Examples}{Examples}
\author{Franck Sueur
\\ Laboratoire Jacques-Louis Lions
\\ Universit\'e Pierre et Marie Curie - Paris 6
\\ 175 Rue du Chevaleret
\\ 75013 Paris
\\ FRANCE }
\date{\today}
\date{\today}
\title{Smoothness of the trajectories of ideal  fluid  particles
  with Yudovich vorticities in a planar bounded domain}
\begin{document}
\maketitle

\begin{abstract}
We consider the incompressible Euler equations in a (possibly multiply connected) bounded domain $\Omega$ of  $\mathbb{R}^2$, for  flows with bounded vorticity, for which Yudovich  proved  in  \cite{yudo63}  global existence and uniqueness of the solution.
 We prove that if   the boundary  $\partial \Omega$  of  the domain is $C^\infty$ (respectively  Gevrey of order  $M \geq 1$)   then the  trajectories  of the  fluid particles are $C^\infty$ (resp. Gevrey of order  $M+2$).   Our  results also cover the case of  ``slightly unbounded"  vorticities for which Yudovich extended his analysis in  \cite{yudo95}.
  Moreover if in addition the initial vorticity is H\"older continuous  on a part of $\Omega$ then this H\"older regularity  propagates smoothly along the flow lines.
Finally we observe that if the vorticity is constant in a neighborhood of the boundary, the smoothness of the boundary is not necessary for these results to hold.
 \end{abstract}

\section{Introduction}

We consider the initial-boundary-value problem for the $2$-D incompressible Euler equations in a regular (possibly multiply connected) bounded domain $\Omega$:
\be 
\label{2DincEuler}
\left\{
\begin{array}{ll}
\partial_t u + u \cdot \nabla_x u + \nabla_x p = 0, &  \mbox{ in }  (0, +\infty ) \times \Omega,\\
\text{div }  u =0, &   \mbox{ in } [0, +\infty ) \times\Omega,\\
u \cdot \hat{n} = 0,  &  \text{ on }  [0, +\infty )   \times \partial \Omega ,\\
u(0,x)=u_0(x), &  \text{ on } \{t=0\}\times\Omega.
\end{array}
\right.
\ee
Here, $u=(u_1,u_2)$ is the velocity field, $p$ is the pressure and $\hat{n}$ denotes the unit outward normal to the boundary $\partial \Omega $ of $\Omega$.
 A key quantity in the analysis is the vorticity 
$\omega := \text{curl }  u $,
which satisfies the transport equation:
\bes
\partial_t \omega + u \cdot \nabla \omega = 0, \mbox{ in }  (0, +\infty ) \times \Omega,
\ees
so that, at least formally, the integral over $\Omega$ of any function of the vorticity is conserved when time proceeds.

The global existence and uniqueness of classical solutions to  \eqref{2DincEuler}
were obtained by W. Wolibner  \cite{wolibner} and extended to multiply connected domains by Kato  in  \cite{kato}.
This result was extended by Yudovich \cite{yudo63} to flows such that the  initial  vorticity  (and hence the vorticity at any moment $t$) is bounded.
The corresponding velocity field $u$  is  Log-Lipschitz  so that  there exists a unique  flow map  $\Phi$ continuous from  $\R_+  \times \Omega$ to $\Omega$ such that   
  \be 
\label{flow}
  \Phi(t,x)  =  x + \int^{t}_0 u (s, \Phi(s,x) ) ds .
  \ee
Moreover there exists $c >0$ such that for any $t > 0 $, the vector field  $ \Phi(t,\cdot) $ lies in the  H\"older space $C^{0 , \exp(-ct  \| \omega_0  \|_{L^{\infty}  ( \Omega  )})}  ( \Omega  )$, and an example of Bahouri and Chemin \cite{BahouriChemin} shows that this estimate is   optimal.
Here and in the sequel  we denote $C^{\lambda,r}(\Omega)$, for $\lambda$ in $\N$ and $r \in (0,1)$,
 the H\"older space  endowed with the norm:
\begin{align*}
  \| u   \|_{  C^{\lambda,r} ( \Omega ) } := \sup_{ |\alpha| \leqslant \lambda}   \big(  \|   \partial^\alpha u  \|_{L^\infty (  \Omega ) }  
+ \sup_{ x \neq y \in   \Omega } \frac{ |\partial^\alpha u (x) -  \partial^\alpha u (y)| }{  |x - y|^r }  \big) <  + \infty  ,
\end{align*}
and the notation $C^{\lambda,r}_{\text{loc}} (\Omega_0)$ holds for the space of the functions which are in $C^{\lambda,r} (K)$ for any compact subset $K \subset \Omega $.

In this paper we prove the following result concerning the smoothness in time of the flow map.
\begin{Theorem} \label{start2}
Assume that the boundary  $\partial \Omega$ is $C^\infty$ (respectively Gevrey of order   $M \geq 1$). Then  there exists $c >0$  such that for any  divergence free vector field $u_0$ in  $L^{2} (\Omega)$ tangent to the boundary $\partial \Omega$, with $\omega_0 := \curl u_0 \in L^{\infty}  ( \Omega  )$,
 the flow map $ \Phi$ is, for any $r \in (0,1)$, for any $T >0$,   $C^\infty$  (resp. Gevrey of order   $M+2$) from $ \lbrack 0, T  \rbrack $ to $  C^{0 , \tilde{r} } (\Omega)$, with $ \tilde{r} :=  r \exp(-cT  \| \omega_0 \|_{L^{\infty}  (\Omega )}) $.
\end{Theorem}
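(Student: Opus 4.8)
The plan is to pass to Lagrangian variables, where the vorticity is frozen, extract from the Biot--Savart law a closed integro-differential equation for the flow map, and read off the time-smoothness by differentiating that equation under the integral sign and bootstrapping. Fix $T>0$ and write $\Phi_t:=\Phi(t,\cdot)$, $\mu(t):=\exp(-ct\|\omega_0\|_{L^\infty(\Omega)})$. Since $\Phi_t$ is measure preserving and $u\cdot\hat n=0$, the vorticity is transported, $\omega(t,\Phi(t,a))=\omega_0(a)$, and the circulations of $u$ around the components of $\partial\Omega$ are conserved (Kelvin). With $G_\Omega$ the Green function of $-\Delta$ on $\Omega$ with homogeneous Dirichlet data, this gives
\be
\label{pp-star}
\partial_t\Phi(t,a)=\int_\Omega\nabla^\perp_\xi G_\Omega\big(\Phi(t,a),\Phi(t,b)\big)\,\omega_0(b)\,db+h\big(\Phi(t,a)\big),
\ee
where $h$ is a fixed, \emph{time-independent}, divergence-free field carrying the circulations (the gradient-perp of a harmonic function with locally constant boundary values; as regular as $\partial\Omega$ allows, and absent when $\Omega$ is simply connected). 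All the time dependence now sits in the arguments $\Phi(t,a),\Phi(t,b)$ of fixed objects.

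Base step: the results recalled above give that $\Phi_t$ is a bi-H\"older homeomorphism with $\Phi_t\in C^{0,\mu(t)}(\Omega)$, and that $u(t,\cdot)$ is Log-Lipschitz, hence in $C^{0,s}(\Omega)$ for every $s<1$. Using the composition rule ($f\in C^{0,a}$, $g\in C^{0,b}\Rightarrow f\circ g\in C^{0,ab}$) and the fact that $C^{0,a}(\Omega)$ is a Banach algebra, \eqref{pp-star} already gives $\partial_t\Phi(t,\cdot)=u(t,\Phi_t)\in C^{0,s\mu(t)}(\Omega)\hookrightarrow C^{0,\tilde r}(\Omega)$ for $t\le T$ and $r\le s<1$, continuously in $t$. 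I then prove by induction on $k$ that $\partial_t^k\Phi(t,\cdot)$ exists, is continuous in $t\in[0,T]$ with values in $C^{0,\tilde r}(\Omega)$, and satisfies $\|\partial_t^k\Phi(t,\cdot)\|_{C^{0,\tilde r}(\Omega)}\le A\,B^k$ (finite for each $k$) in the $C^\infty$ case, respectively $\le A\,B^k\,(k!)^{M+2}$ in the Gevrey case; summation then yields the theorem.

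Inductive step: differentiate \eqref{pp-star} $k-1$ times in $t$, moving the derivatives under the integral. Split $G_\Omega(\xi,\eta)=-\tfrac{1}{2\pi}\log|\xi-\eta|+\gamma_\Omega(\xi,\eta)$; the regular part $\gamma_\Omega$ is smooth (resp. Gevrey of order $M$) on $\overline\Omega\times\overline\Omega$ with quantitative bounds, since $\partial\Omega$ is $C^\infty$ (resp. Gevrey $M$) and $\gamma_\Omega(\cdot,\eta)$ is harmonic with boundary data $\tfrac{1}{2\pi}\log|\cdot-\eta|$. By the Fa\`a di Bruno/Leibniz expansion, $\partial_t^{k-1}$ of the integrand is a combinatorial sum of terms $\big(\nabla^m_{(\xi,\eta)}\nabla^\perp_\xi G_\Omega\big)(\Phi_a,\Phi_b)$ contracted with products of factors $\partial_t^{j_i}\Phi(t,\cdot)$ at $a$ or at $b$, with $\sum_i j_i=k-1$. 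The $\gamma_\Omega$-terms and the $h(\Phi_a)$-terms are benign (bounded kernels, products on the algebra $C^{0,\tilde r}(\Omega)$), the boundary regularity entering only through the Gevrey-$M$ bounds on $\gamma_\Omega$ and $h$. The dangerous terms carry the singular part, whose $m$-th derivative is homogeneous of degree $-1-m$; here one uses \emph{(i)} translation invariance of $\log|\xi-\eta|$, so that the symmetric combinations $(\nabla_\xi+\nabla_\eta)$ produce only commutators and the effective order of the singularity drops by one; \emph{(ii)} incompressibility $\div u=0$, which supplies the principal-value/Calder\'on--Zygmund cancellation rendering order-$(-2)$ kernels admissible, i.e. bounded on $C^{0,\alpha}(\Omega)$ for \emph{every} $\alpha\in(0,1)$ (not on $L^\infty$ — which is why the H\"older scale is forced, and why the H\"older exponent does not degrade as $k$ grows); \emph{(iii)} the H\"older regularity of the lower-order $\partial_t^j\Phi(t,\cdot)$ granted by the induction, so that differences $\partial_t^j\Phi(t,a)-\partial_t^j\Phi(t,b)$ absorb a power $|\Phi_a-\Phi_b|^\beta$ and turn higher-order kernels into integrable ones. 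This legitimizes differentiation under the integral at every order, uniformly for $t\in[0,T]$, and gives the stated bounds; tracking constants, each time derivative costs a fixed finite amount in the $C^\infty$ case, while in the Gevrey case the order-two nature of $-\Delta$ (hence of the Biot--Savart inversion), together with the Gevrey-$M$ control of $\gamma_\Omega$ and the combinatorics of the iterated differentiation, makes the constants compound to Gevrey order $M+2$.

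The main obstacle is this inductive control of the singular integrals: the kernels $\nabla^m\nabla^\perp G_\Omega$ become ever more singular as $k$ grows, and it is only the combination of translation invariance of the fundamental solution, incompressibility, and the bootstrapped H\"older regularity of the lower-order time derivatives that keeps them Calder\'on--Zygmund and hence bounded on $C^{0,\alpha}$, $\alpha<1$; and it is only a careful Fa\`a di Bruno/Leibniz bookkeeping — weighing the number of terms and the growth of their constants against the Gevrey-$M$ bounds on $\gamma_\Omega$ up to $\partial\Omega$ — that yields the sharp exponent $M+2$ rather than something worse. Near $\partial\Omega$, the tangency $u\cdot\hat n=0$ and the boundary regularity keep $\gamma_\Omega$ and all its derivatives controlled up to the boundary, so the boundary terms fit the same scheme.
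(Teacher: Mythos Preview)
Your Lagrangian/Biot--Savart scheme has a genuine gap at step (iii). After $k-1$ time derivatives, the Fa\`a di Bruno expansion of the singular part produces terms of the form $\big(\nabla^{m}K\big)(\Phi_a-\Phi_b)\prod_{i=1}^{m}\big(\partial_t^{j_i}\Phi_a-\partial_t^{j_i}\Phi_b\big)$ with $K(z)=\tfrac{1}{2\pi}\nabla^\perp\log|z|$ and $\nabla^m K$ homogeneous of degree $-1-m$. Your induction only gives $\partial_t^{j}\Phi\in C^{0,\tilde r}$ in the \emph{Lagrangian} variable, so $|\partial_t^{j}\Phi_a-\partial_t^{j}\Phi_b|\le C|a-b|^{\tilde r}\le C|\Phi_a-\Phi_b|^{\tilde r\,\mu(t)}$ via the bi-H\"older property of the flow. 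The product of $m$ such factors kills only $m\,\tilde r\,\mu(t)$ orders of the singularity, leaving an effective kernel of order $-1-m(1-\tilde r\,\mu(t))$. Since $\tilde r\,\mu(t)<1$ (and can be arbitrarily small), for $m\ge 2$ this is strictly worse than $-2$: the kernel is neither integrable nor of Calder\'on--Zygmund type, and your claim that the differences ``turn higher-order kernels into integrable ones'' fails. Point (ii) does not rescue this: CZ cancellation helps at order exactly $-2$, not at order $-3,-4,\ldots$, and the incompressibility of $u$ gives no extra cancellation on these higher derivatives of the fundamental solution. This is precisely why Gamblin in the whole space does \emph{not} differentiate the Biot--Savart representation but works with $D^k u$ in Eulerian H\"older spaces; your sketch is not his argument.

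The paper takes a different route that sidesteps this escalation entirely. It works with the material derivatives $D^k u$ in Eulerian variables and observes (Lemma~\ref{P1}) that $\div D^k u$, $\curl D^k u$ and $\hat n\cdot D^k u|_{\partial\Omega}$ are explicit polynomials in $\nabla D^{\alpha_1}u,\ldots,\nabla D^{\alpha_s}u$ with $|\alpha|=k+1-s$ --- only \emph{first} spatial derivatives appear, so no singularity ever grows. The closure is obtained by elliptic regularity in $W^{1,p}$ (Lemma~\ref{triv}) with the Yudovich constant $cp$, combined with the ``walking down the scale'' trick: one proves recursively $\|D^k u\|_{W^{1,p_2/(k+1)}}\le p_2^{\,k}(k!)^M L^k (k+1)^{-2}\|u\|_{W^{1,p_2}}^{k+1}$, the H\"older inequality \eqref{holder} assembling the products. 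Applying this with $p_2$ replaced by $p_2(k+1)$, using $\|\omega_0\|_{L^{p}}\le\|\omega_0\|_{L^\infty}$ and Kelvin, and then Morrey, yields $\|D^k u\|_{C^{0,r}}\lesssim (k!)^{M+2}L^k$; finally $\partial_t^{k+1}\Phi=(D^k u)\circ\Phi$ and Lemma~\ref{trivcomp} give the result. The extra two Gevrey orders come transparently from the two Stirling factors (one from $(k+1)^k$ in the scale shift, one from $p^{k+1}\sim (k+1)^{k+1}$ in the elliptic constant), not from any bookkeeping on $\gamma_\Omega$.
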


 Theorem \ref{start2} extends some previous results on the smoothness of the trajectories of the incompressible Euler equations that we now recall. In  \cite{cheinventiones}, \cite{cheminsmoothness} 
Chemin proved some similar statements for classical solutions in the full space. More precisely, he proves 
that  the flow map $ \Phi$ is $ C^{\infty}$ from $ \lbrack 0, T  \rbrack $, for any $T \in (0, T^* )$ with $T^*$ is the lifetime of the  classical solution,
 to the H\"older space\footnote{One has also to require a decreasing condition at infinity to avoid anomalous solutions, for instance imposing that the velocity field $u$ is  in $L^q (\R^3 )$ with $1<q<+\infty$.} $  C^{ 1,r  } (\R^d)$ for $r \in  (0,1)$ and for  both $d=2$ and  $d=3$.
  These results were improved by  Gamblin \cite{gamblin} and  Serfati  \cite{Serfati1}, \cite{Serfati2}, \cite{Serfati3} who prove that the  flow of classical solutions is analytic and that  the  flow of Yudovich's solutions with bounded vorticity is Gevrey $3$, still for fluids filling the whole space.

  Their results were extended to the case of classical solutions in bounded domains,  in both $2$ and $3$ dimensions, in  \cite{katoana}  by Kato (the flow map $ \Phi$ is $ C^{\infty}$ from $ \lbrack 0, T  \rbrack $ to the H\"older space  $  C^{1,r} (\Omega)$ for $r \in  (0,1)$) and in  \cite{ogfstt} (the flow map $ \Phi$ is analytic from $ \lbrack 0, T  \rbrack$ to the H\"older space $  C^{ 1,r  } (\Omega)$ for $r \in  (0,1)$). Actually the main result in  \cite{ogfstt}  is that  the motion of a
rigid body immersed in an incompressible perfect fluid which occupies a three dimensional
bounded domain is  at least as smooth as the boundaries (of the body and of the domain) when  the initial velocity of the fluid is in the  H\"older space  $  C^{1,r} (\Omega)$  (till the classical solution exists and 
till the solid does not hit the boundary).
One ingredient of the proof was precisely the smoothness of the flow  of the incompressible Euler equations. 
We therefore hope that the analysis of the paper should be applied to the smoothness of the motion of a body immersed in a perfect incompressible fluid with Yudovich  vorticities.

The following result bridges Theorem \ref{start2} and the earlier results about classical solutions, proving that, for Yudovich solutions,  extra local H\"older regularity  propagates smoothly along the flow lines.
\begin{Theorem} \label{start2localh}
Under the (respective) hypotheses of Theorem \ref{start2}, and assuming moreover that the restriction  $\omega_0 |_{ \Omega_0 }$ is in the H\"older space $C^{ \lambda_0 ,r}_{loc}  ( \Omega_0 )$, where $ \lambda_0  \in  \N $ and   $r \in (0,1)$ and $\Omega_0$ an open set such that $\overline{\Omega_0} \subset \Omega $, we have that 
 the flow map $ \Phi$ is, for any $T >0$,  for any compact $K  \subset {\Omega_0}$, 
  $C^\infty$  (resp. Gevrey of order   $M + 2 + ( \lambda_0 + 1) ( r + 1)$) from $ \lbrack 0, T  \rbrack $ to $  C^{ \lambda_0 + 1 ,r} (K)$.
\end{Theorem}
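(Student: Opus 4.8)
The key is to extend the machinery that (presumably) underlies Theorem \ref{start2} — namely, a fixed-point / recursion argument on the time-Taylor coefficients of the flow map $\Phi$, controlled in Hölder spaces via elliptic estimates for the stream function — so as to track the *extra* spatial regularity of the vorticity on the interior region $\Omega_0$. Let me describe the approach in four stages.

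First, I would recall how the flow is reconstructed. Writing $\partial_t\Phi(t,x)=u(t,\Phi(t,x))$ and differentiating repeatedly in $t$, one obtains that the $k$-th time derivative $\partial_t^k\Phi$ is a universal polynomial expression in the spatial derivatives of the "Eulerian velocity field" $u(t,\cdot)$ and its time derivatives, composed with $\Phi$; and the time derivatives of $u$ themselves solve, via the Euler equations, a cascade of transport/elliptic problems driven by $\omega(t,\cdot)=\omega_0\circ\Phi(t,\cdot)^{-1}$. The Biot--Savart law on $\Omega$ (inverting $\operatorname{curl}$ with the tangency boundary condition, plus the circulation conditions in the multiply connected case) gains two derivatives, so $u(t,\cdot)$ is controlled in $C^{\lambda+1,r}_{\loc}$ wherever $\omega(t,\cdot)$ is controlled in $C^{\lambda-1,r}_{\loc}$, away from the boundary.

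Second — and this is the crux — I would propagate the local Hölder regularity of the vorticity. Since $\omega(t,\cdot)=\omega_0\circ\Phi^{-1}(t,\cdot)$ and $\omega_0|_{\Omega_0}\in C^{\lambda_0,r}_{\loc}(\Omega_0)$, the regularity of $\omega(t,\cdot)$ on the moving domain $\Phi(t,\Omega_0)$ is exactly that of $\Phi^{-1}(t,\cdot)$ composed with a $C^{\lambda_0,r}$ function; so I must show $\Phi(t,\cdot)$ and $\Phi^{-1}(t,\cdot)$ are themselves $C^{\lambda_0+1,r}$ on the relevant compacts. This is a bootstrap: from Theorem \ref{start2} the flow is already smooth in time with values in a low-regularity Hölder space; the ODE $\partial_t\Phi=u\circ\Phi$ with $u(t,\cdot)\in C^{\lambda_0+1,r}_{\loc}$ (interior elliptic regularity, using $\omega(t,\cdot)\in C^{\lambda_0,r}_{\loc}$, hence $\curl u\in C^{\lambda_0,r}$, hence $u\in C^{\lambda_0+1,r}$) upgrades the spatial regularity of $\Phi(t,\cdot)$ by one order compared to that of $\omega$; this closes the loop consistently at regularity $\lambda_0+1$ for the flow and $\lambda_0$ for the vorticity. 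One repeats this on a chain of nested compacts $K\subset K_1\subset\cdots\subset\Omega_0$ to absorb the loss of domain at each differentiation (transport moves mass, and the elliptic estimate is only interior), which is why the statement is phrased for arbitrary compact $K\subset\Omega_0$.

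Third, I would assemble the Gevrey bound. Each additional spatial derivative one wants on $\Phi$ costs, in the recursion for the time-Taylor coefficients, a controlled number of extra factorials: the composition $\omega_0\circ\Phi^{-1}$ and the Faà di Bruno combinatorics for $\partial_t^k$ of the $(\lambda_0+1)$-fold spatially-differentiated flow, together with the two-derivative-gaining but Gevrey-class-preserving Biot--Savart operator on a Gevrey-$M$ boundary, degrade the Gevrey order additively. Tracking the arithmetic — $M$ from the boundary, $+2$ from Theorem \ref{start2}'s two elliptic derivatives, and an extra $(\lambda_0+1)(r+1)$ coming from the $\lambda_0+1$ spatial derivatives each carrying a Hölder exponent $r$ and a unit of regularity in the interpolation/ composition estimates — yields the claimed Gevrey order $M+2+(\lambda_0+1)(r+1)$; the $C^\infty$ case is the same recursion without the factorial bookkeeping.

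The main obstacle I anticipate is making the interior elliptic regularity and the transport of Hölder regularity uniform in time on the shrinking nested compacts, with constants that grow only geometrically in $\lambda_0$ (so that the Gevrey series converges): one must choose the chain of compacts and the time interval so that $\Phi(t,K)$ stays inside $K_1$ for $t\in[0,T]$ — which uses only the already-established continuity of the flow — and then run the quantitative estimates with explicit control of how the $C^{\lambda_0+1,r}$-norm of $u(t,\cdot)$ on $K_1$ feeds back into the $C^{\lambda_0+1,r}$-norm of $\Phi(t,\cdot)$ on $K$. The combinatorial Gevrey bookkeeping for the coupled flow/velocity cascade at arbitrary spatial order $\lambda_0+1$ is the other delicate point, but it is a routine (if lengthy) extension of the $\lambda_0=0$ case handled in Theorem \ref{start2}.
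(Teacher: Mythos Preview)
Your outline is essentially the paper's approach: first propagate the local H\"older regularity of $\omega$ and $u$ along the flow (this is the paper's Theorem~\ref{localreg}, proved by exactly the bootstrap you describe), then run a Gevrey recursion with interior elliptic estimates on shrinking compacts (the paper's Theorem~\ref{start22localh}, of which the present statement is the bounded-vorticity special case $\theta=\theta_0$). Two points of comparison are worth noting. First, the paper works not with $\partial_t^k\Phi$ and Fa\`a di Bruno directly, but with the material derivatives $D^k u$ (related by $\partial_t^{k+1}\Phi=(D^k u)\circ\Phi$) together with Kato's commutator identities (Lemma~\ref{P1}) for $\div D^k u$ and $\curl D^k u$; this keeps the combinatorics explicit and feeds straight into the elliptic step. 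Second, and more importantly, your account of where the extra Gevrey exponent $(\lambda_0+1)(r+1)$ comes from is too hand-wavy to stand as a proof. In the paper it arises concretely from the interior Schauder estimate (Lemma~\ref{LemmeInterior}), which passes from $C^{\lambda-1,r}(K_{\tilde\eps})$ to $C^{\lambda,r}(K_{\tilde\eps+\eps})$ at the price of a factor $\eps^{-(1+r)}$; the double induction (outer in $\lambda$, inner in $k$) on $\|D^k u\|_{C^{\lambda,r}(K_{k\eps})}$ accumulates this to $\eps^{-k\lambda(1+r)}$ in \eqref{decrea}, and the final choice $\eps\sim \dist(K,\underline K^c)/k$ converts that into $(k!)^{(\lambda_0+1)(1+r)}$ via Stirling. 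That is the specific mechanism you should make explicit; it is not a property of Biot--Savart or of the boundary regularity. (Incidentally, Biot--Savart gains one derivative, not two: $\omega\in C^{\lambda,r}$ gives $u\in C^{\lambda+1,r}$, so your indexing in the first stage is off by one.)
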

Actually we will  obtain Theorem \ref{start2} (resp. Theorem \ref{start2localh}) as a particular case of Theorem \ref{start22} (resp. Theorem \ref{start22localh}) below, which encompasses more general initial vorticities. More precisely we will also consider the  ``slightly unbounded" vorticities introduced by Yudovich  in  \cite{yudo95}.

 \section{Yudovich's slightly unbounded vorticities }

In this section we recall the setting of Yudovich's paper  \cite{yudo95}
with a few extra remarks which will be useful  in the sequel.
 We start with the following definition.
\begin{Definition}[Admissible germs] 
 \label{germi}
 A function  $\theta : [p_0 , +\infty ) \rightarrow  (0, +\infty )$, with  $p_0 > 1$, is said admissible if the auxiliary function $T_{\theta} :  [1, +\infty ) \rightarrow  (0, +\infty )$ defined for $a > 1$ by
\begin{eqnarray*}
T_{\theta} (a) := \inf \{ \frac{a^\epsilon }{\epsilon } \theta (\frac{1 }{\epsilon } ), \ 0 < \epsilon \leqslant 1/ p_0 \} 
\text{ satisfies }
\int^{+ \infty }_1  \frac{da}{a T_{\theta} (a) } = \infty .
\end{eqnarray*}
\end{Definition}
Let us denote $\theta_0 (p) :=  1 $, and, for any $m \in \N^*$,  
\begin{eqnarray}
 \label{thetam}
\theta_m (p) := \log p \cdot \log^2 p \cdot \cdot \cdot  \log^m p ,
\end{eqnarray}
where $\log^m$ is log composed with itself $m$ times.
\begin{Examples}
 \label{exam}
For  any $m \in \N$, the germs  $\theta_m $  are admissible.
\end{Examples}
\begin{proof}
Let be given   $\theta : [p_0 , +\infty ) \rightarrow  (0, +\infty )$, with  $p_0 > 1$.
For $a > e^{p_0}$,
\begin{eqnarray}
 \label{TT}
T_{\theta} (a) \leq e  \log a \cdot   \theta ( \log a ) ,
\end{eqnarray}
since $\epsilon = ( \log a )^{-1}$ is in $(0,1/ p_0 )$.
 Therefore,
\begin{eqnarray*}
\int^{+ \infty }_1  \frac{da}{a T_{\theta} (a) }  \geq e^{-1} \int^{+ \infty }_{ e^{p_0}}  \frac{d \log a}{ \log a \cdot  \theta ( \log a ) }   =     e^{-1} \int^{+ \infty }_{ {p_0}}  \frac{dp}{  p \cdot  \theta ( p ) } .
\end{eqnarray*}
The result  follows from a repeated change of variables: for  any $m \in \N$, for  any $p_2 
 \geq p_1  \geq \exp^{m} (1) $, where $\exp^m$ is $\exp$ composed with itself $m$ times,
\begin{eqnarray}
 \label{calcul}
\int^{p_2 }_{p_1 }   \frac{dp}{  p \cdot  \theta_m ( p ) } =  \log^{m+1} p_2 - \log^{m+1} p_1 .
  \end{eqnarray}
\end{proof}
\begin{Definition}[The space  $\bY_\theta$] 
Given an admissible germ $\theta : [p_0 , +\infty ) \rightarrow  (0, +\infty )$, with  $p_0 > 1$,  we denote $\bY_\theta$ the space of  the divergence free vector fields $u$ in  $L^{2} (\Omega)$ tangent to the boundary, such that  $\curl u $  belongs in
$\cap_{p \geqslant p_0} \, L^p (\Omega)$, and such that there exists $c_f > 0$ such that 
\begin{eqnarray}
 \label{propre}
\| f  \|_{  L^p  ( \Omega  )} \leq c_f \theta (p)  \text{ for } p \geqslant p_0 .
  \end{eqnarray}
It is a Banach space endowed with the following norm:
\begin{eqnarray*}
\| f  \|_{ \bY_\theta  } :=   \| f  \|_{ L^2 ( \Omega) } +     \inf \{ c_f > 0 /  \text{  \eqref{propre} holds true }  \} .
  \end{eqnarray*}
\end{Definition}
\begin{Remark}
 \label{class}
In particular  for $\theta = \theta_0 $, the space $\bY_\theta$ corresponds to  the space of  the divergence free vector fields $u$ in  $L^{2} (\Omega)$ tangent to the boundary with  $\curl u $ in $L^{\infty} (\Omega)$.
\end{Remark}
\begin{Remark}
Vorticities with a point singularity at $x_0 \in \Omega$ of type $\log \log \|  x - x_0 \|^{-1} $ belongs to the space  $\bY_\theta$, with $\theta$ of the form $\theta= c \theta_1 $ (where $c$ is a   positive constant), which is therefore admissible  (cf. \cite{yudo95}, Example $3.3$).
On the other hand thanks to Laplace's method, we have that $L^p$ norms of a vorticity with a point singularity at $x_0 \in \Omega$ of type $\log \|  x - x_0 \|^{-1} $ is equivalent to $c p$ (where $c$ is a   positive constant), and is therefore   non-admissible  (cf. \cite{yudo95}, Example $3.2$).
\end{Remark}
In this setting existence and uniqueness holds according to the following result.
\begin{Theorem}[Yudovich \cite{yudo95}]
 \label{95}
Assume that  the boundary  $\partial \Omega$  of  the domain is $C^2$.
Given $u_0$ in  $\bY_\theta$,  there exists a unique weak solution $u$ of  \eqref{2DincEuler} in  $ L^\infty  ([0, +\infty ), \bY_\theta )$.
\end{Theorem}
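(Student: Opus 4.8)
\textbf{Overall strategy.} The plan is to follow Yudovich's scheme from \cite{yudo95}, reducing the problem to the transport of vorticity along the flow of a divergence-free, tangent velocity field reconstructed from the vorticity via the Biot--Savart law in $\Omega$. First I would set up the linear theory: given $\omega \in L^p(\Omega)$ for all $p \geq p_0$, with $\int_\Omega \omega$ prescribed on each connected component of $\partial\Omega$ (the harmonic/circulation part fixed by $u_0$), solve the div-curl system $\curl u = \omega$, $\div u = 0$, $u\cdot\hat n = 0$ to obtain $u = K[\omega]$; elliptic regularity in the $C^2$ domain gives $\|u\|_{W^{1,p}(\Omega)} \lesssim \|\omega\|_{L^p(\Omega)} + \|u_0\|_{L^2}$, uniformly in $p$ up to a constant times $p$, hence by Morrey/Sobolev embedding a modulus-of-continuity estimate $|u(x)-u(y)| \lesssim c_f\, \mu(|x-y|)$ where $\mu$ is the Osgood modulus built from $\theta$ — this is exactly the role of the admissibility condition via the function $T_\theta$.

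\textbf{Key steps in order.} (1) Linear elliptic estimates for the Biot--Savart operator in $\Omega$, with the $p$-dependence tracked explicitly, yielding the Osgood modulus of continuity for $u$ in terms of $\|\omega\|_{\bY_\theta}$. (2) A priori estimates: since $\omega$ is transported by a divergence-free flow, all $L^p$ norms of $\omega(t,\cdot)$ are conserved, so the solution stays in $\bY_\theta$ with the same germ $\theta$ for all time — this gives the global-in-time bound and membership in $L^\infty([0,\infty),\bY_\theta)$. (3) Existence: regularize $u_0$ (e.g. mollify $\omega_0$, keeping the circulations fixed) to get smooth solutions by Wolibner--Kato, pass to the limit using the uniform $\bY_\theta$ bound and compactness (Aubin--Lions, using $\partial_t u$ bounded in a negative Sobolev space via the equation), checking the weak formulation is preserved. (4) Uniqueness: for two solutions $u^1, u^2$ with flows $\Phi^1,\Phi^2$, estimate $\delta(t) := \int_\Omega |\Phi^1(t,x)-\Phi^2(t,x)|\,dx$ (or an $L^2$ energy difference), obtaining $\delta'(t) \leq C\,\delta(t)\,\beta(\delta(t))$ where $\beta$ is a concave modulus with $\int_0 \frac{ds}{s\beta(s)} = \infty$; Osgood's lemma then forces $\delta \equiv 0$. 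The admissibility integral $\int_1^\infty \frac{da}{a T_\theta(a)} = \infty$ is precisely what makes this Osgood condition hold.

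\textbf{Main obstacle.} The delicate point is step (1) together with the Osgood argument in step (4): one must convert the family of $L^p$ bounds $\|\omega\|_{L^p} \leq c_f\,\theta(p)$ into a single admissible modulus of continuity for the velocity, and then show this modulus is Osgood. Concretely, $|u(x)-u(y)| \lesssim c_f \inf_{0<\epsilon\leq 1/p_0} \frac{|x-y|^\epsilon}{\epsilon}\theta(1/\epsilon) = c_f\, T_\theta(|x-y|^{-1})$-type bound, and one needs the quantitative continuity of the Biot--Savart kernel and its gradient in the bounded domain $\Omega$ (the Green's function of the Laplacian with the relevant boundary conditions), which requires the $C^2$ regularity of $\partial\Omega$; handling the harmonic correction terms and the multiply connected topology (the circulations around the holes) is a bookkeeping complication but not a genuine difficulty. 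Since this theorem is quoted verbatim from \cite{yudo95}, I would in practice refer to Yudovich's proof for the technical core and only supply the few extra remarks needed later (e.g. that the modulus depends on $\omega_0$ only through $\|\omega_0\|_{\bY_\theta}$).
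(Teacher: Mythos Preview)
The paper does not prove Theorem~\ref{95}: it is stated as a result of Yudovich and attributed to \cite{yudo95} without proof. Your proposal is a reasonable outline of Yudovich's argument (elliptic $L^p$ estimates with explicit $p$-dependence for the Biot--Savart law, conservation of $L^p$ norms of vorticity, compactness for existence, and an Osgood-type energy/flow estimate for uniqueness), and you correctly recognize at the end that the theorem is quoted from \cite{yudo95} and that in practice one refers to Yudovich for the technical core.

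One small correction in your sketch of step~(4): Yudovich's uniqueness argument in \cite{yudo95} is an $L^2$ energy estimate on the velocity difference $u^1-u^2$, not on the flow difference $\Phi^1-\Phi^2$ in $L^1$; one writes $\frac{d}{dt}\|u^1-u^2\|_{L^2}^2 \le C \|\nabla u^1\|_{L^p}\|u^1-u^2\|_{L^{2p'}}^2$, interpolates, optimizes over $p$, and obtains an inequality of the form $\frac{d}{dt}\delta \le C\,\delta\, T_\theta(\delta^{-1})$ to which the Osgood criterion (via the admissibility of $\theta$) applies. This is the place where the integral condition $\int_1^\infty \frac{da}{a\,T_\theta(a)}=\infty$ enters, as you note.
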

We are now going to examine the  flow map of  these solutions.
We first recall the following definition.
\begin{Definition}[Modulus of continuity]
We will say that, $a >0$ being given, a function $\mu : \lbrack 0, a  \rbrack \rightarrow \R_+$ is a modulus of continuity if it is an increasing continuous function such that $\mu (0) = 0$. We will denote by $C_\mu (\Omega)$ the space of 
continuous   functions $f$ over $\Omega$  such that the following semi-norm
\begin{eqnarray*}
\| f  \|_{C_{\mu}  ( \Omega  )} := 
 \sup_{ 0 < \| x - y \| \leqslant a } \frac{ \| f(x) - f(y) \| }{\mu( \| x - y \|) } 
\end{eqnarray*}
is finite.
\end{Definition}
A function in $C_\mu (\Omega)$ extends uniquely to a function in  $C_\mu (\overline{\Omega})$. In particular, since $\Omega $ is bounded,  a  function in $C_\mu (\Omega)$ is bounded.
Moreover  in the case where $a > \text{diam}( \Omega)$, and $\mu (h) := h^r$ with $r \in (0,1)$, then  $C_\mu (\Omega) = C^{0,r} (\Omega)$.
  
Let us remark the following.
\begin{Lemma}
 \label{trivcomp}
Let $F$ be in the  H\"older space $C^{0,r}(\Omega)$,  with $r \in (0,1)$,  let $\mu$ be a modulus of continuity and let $\phi :\Omega \rightarrow \Omega$ be  in   $C_\mu (\Omega)$; then $\mu^r$ is a modulus of continuity and $F \circ \phi  $ is in $C_{\mu^r} (\Omega)$ with $ \|  F \circ \phi  \|_{ C_{\mu^r} (\Omega) } \leqslant   \|  F   \|_{ C^{0,r}(\Omega) } \cdot \| \phi   \|_{C_{\mu}  ( \Omega  )}^r$.
\end{Lemma}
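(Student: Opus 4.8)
The statement follows quickly once the two definitions are unwound, so the plan is short.

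First I would check that $\mu^r$ is a modulus of continuity: since $h \mapsto h^r$ is continuous, strictly increasing on $\lbrack 0, +\infty)$ and vanishes at $0$, and $\mu : \lbrack 0,a\rbrack \to \R_+$ is continuous, increasing with $\mu(0) = 0$, the composite $\mu^r := (\mu(\cdot))^r$ inherits these three properties and is therefore a modulus of continuity on $\lbrack 0,a\rbrack$.

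Next, the heart of the matter is a two-line chaining estimate. For $x \neq y$ in $\Omega$ with $\|x-y\| \leqslant a$, I would write
\begin{eqnarray*}
\| F(\phi(x)) - F(\phi(y)) \| \leqslant \| F \|_{C^{0,r}(\Omega)}\, \| \phi(x) - \phi(y) \|^r \leqslant \| F \|_{C^{0,r}(\Omega)}\, \| \phi \|_{C_\mu(\Omega)}^r\, \mu(\|x-y\|)^r ,
\end{eqnarray*}
using in the first inequality that $\phi$ takes values in $\Omega$ (so that $\phi(x), \phi(y)$ lie in the domain of $F$) together with the H\"older bound on $F$, and in the second the defining bound on $\phi$ raised to the power $r$ (the monotonicity of $t \mapsto t^r$). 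Dividing by $\mu^r(\|x-y\|) = \mu(\|x-y\|)^r$ and taking the supremum over admissible pairs $x \neq y$ gives at once both the finiteness of the $C_{\mu^r}(\Omega)$ seminorm of $F \circ \phi$ and the claimed inequality.

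I do not expect any real obstacle; the only point deserving a comment is the possibility that $\mu(\|x-y\|) = 0$ for some $x \neq y$ (a priori allowed, since $\mu$ is merely assumed increasing). In that case finiteness of $\|\phi\|_{C_\mu(\Omega)}$ forces $\phi(x) = \phi(y)$, hence $F(\phi(x)) = F(\phi(y))$, so such pairs contribute $0$ to the relevant supremum and can be discarded, and the estimate holds for all admissible pairs. In the sequel this lemma is what lets one push H\"older regularity of a function through a flow map that is known only to be $C_\mu$-regular, at the cost of replacing the modulus $\mu$ by $\mu^r$.
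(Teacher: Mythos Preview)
Your proof is correct and follows essentially the same chaining argument as the paper's own proof. You are in fact slightly more careful in handling the degenerate case $\mu(\|x-y\|)=0$ (and implicitly $\phi(x)=\phi(y)$), which the paper's multiply-and-divide presentation glosses over.
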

\begin{proof}
For $x,y $ in $\Omega $, with $0 < \| x - y \| \leqslant a$, we have
\begin{eqnarray*}
\frac{\| F \circ \phi (x) - F \circ \phi (y) \|}{\mu( \| x - y \|)^r }
  \leqslant   \frac{\| F \circ \phi (x) - F \circ \phi (y) \| }{  \| \phi (x) -  \phi (y) \|^r } \cdot  \frac{\| \phi (x) -  \phi (y) \|^r }{\mu( \| x - y \|)^r }
  \leqslant  \|  F \|_{ C^{0,r}(\Omega) } \cdot  \| \phi   \|_{C_{\mu}  ( \Omega  )}^r ,
\end{eqnarray*}
and $\mu^r : h \in \lbrack 0, a  \rbrack \mapsto \mu(h)^r $ is a modulus of continuity.
\end{proof}
\begin{Definition}[Osgood modulus of continuity]
 We say that  a  modulus of continuity $\mu : \lbrack 0, a  \rbrack \rightarrow \R_+$ is an Osgood modulus of continuity if $\int_0^{a }  \frac{dh}{ \mu(h) } = + \infty $.
\end{Definition}
\begin{Remark}
For a modulus of continuity  (Osgood or not), only the behavior of $\mu$ near $0$ does really matter for our purposes, not the value of $a$.
\end{Remark}
\begin{Theorem}[Yudovich  \cite{yudo95}]
\label{mod}
Assume that  the boundary  $\partial \Omega$  of  the domain is $C^2$.
There exists $C > 0$ (depending only on $\Omega$) such that 
for any admissible germ $\theta$, for any initial velocity $u_0$ in  $\bY_\theta$,  
 the corresponding  unique weak solution $u$ of the Euler equations provided by Theorem \ref{95} is in   $ L^\infty  ([0, +\infty ), C_\mu ( \Omega  ))$, where the modulus of continuity  $\mu$ satisfies 
 \begin{eqnarray}
 \label{maj}
  \mu (h) \leq  C hT_{\theta} (h^{-2}), 
   \end{eqnarray}
  where $T_{\theta}$ is the function which appeared in Definition  \ref{germi}. Thus $ \mu$ is  an Osgood modulus of continuity.
\end{Theorem}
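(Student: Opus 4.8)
The plan is to deduce the modulus-of-continuity bound \eqref{maj} for the velocity field from the elliptic regularity of the Biot--Savart operator on $\Omega$ combined with an $L^p$-interpolation argument, optimized in $p$. Since $u(t,\cdot)\in\bY_\theta$ uniformly in $t$ by Theorem \ref{95}, it suffices to establish, for every divergence free $v\in L^2(\Omega)$ tangent to the boundary with $\curl v=f\in\cap_{p\ge p_0}L^p(\Omega)$ satisfying $\|f\|_{L^p(\Omega)}\le c_f\theta(p)$, that $v\in C_\mu(\Omega)$ with the stated bound and a constant depending only on $c_f$ (hence on $\|v\|_{\bY_\theta}$) and $\Omega$. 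First I would recall that $v$ is recovered from $f$ via $v=\nabla^\perp\psi$ where $\psi$ solves the Dirichlet problem $\Delta\psi=f$ in $\Omega$ (plus harmonic corrections to account for the possibly multiply connected topology and the prescribed circulations around the holes, all of which contribute only smooth/bounded terms since the harmonic part is controlled by $\|v\|_{L^2}$). The Calder\'on--Zygmund estimate on the bounded $C^2$ domain $\Omega$ then gives $\|\nabla v\|_{L^p(\Omega)}\le C p\,\|f\|_{L^p(\Omega)}$, with the crucial linear-in-$p$ growth of the constant as $p\to\infty$ — this is the standard sharp bound for the $\nabla\nabla\Delta^{-1}$ operator.

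Next I would convert the $W^{1,p}$ control with $p$-dependent constant into a modulus of continuity by a Morrey-type / dyadic argument. For $x\neq y$ in $\Omega$ with $h:=\|x-y\|$ small, one writes $v(x)-v(y)$ as an integral of $\nabla v$ along a path, or more robustly estimates $|v(x)-v(y)|$ by averaging $\nabla v$ over balls of radius comparable to $h$ and telescoping over dyadic scales; H\"older's inequality on a ball $B$ of radius $\rho$ gives $\rho^{-2/p}\|\nabla v\|_{L^p(B)}\cdot\rho^{1-2/p}$-type terms, so that
\begin{eqnarray*}
|v(x)-v(y)|\le C\,\|\nabla v\|_{L^p(\Omega)}\,h^{1-2/p}\le C c_f\, p\,\theta(p)\, h^{1-2/p}.
\end{eqnarray*}
Now set $\epsilon:=1/p\in(0,1/p_0]$: the right-hand side becomes $C c_f\, h\cdot\epsilon^{-1}\theta(1/\epsilon)\,h^{-2\epsilon}=C c_f\, h\cdot\frac{(h^{-2})^\epsilon}{\epsilon}\theta(1/\epsilon)$. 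Taking the infimum over admissible $\epsilon$ exactly produces $C c_f\, h\, T_\theta(h^{-2})$, which is \eqref{maj}. One has to be slightly careful that $a$ in the definition of the modulus can be taken small (by the Remark, only the behavior near $0$ matters), so that $h^{-2}\ge 1$ and $T_\theta(h^{-2})$ is defined; for $h$ bounded away from $0$ the estimate is trivial since $v$ is bounded. That $\mu$ is increasing and continuous with $\mu(0)=0$ follows because $a\mapsto T_\theta(a)$ is at most of logarithmic-times-$\theta$ growth by \eqref{TT}, so $hT_\theta(h^{-2})\to 0$ as $h\to0$.

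The Osgood property is then a direct consequence: substituting $\mu(h)=ChT_\theta(h^{-2})$ and changing variables $a=h^{-2}$ (so $dh/h=-\frac12\,da/a$) turns $\int_0^a dh/\mu(h)$ into a positive multiple of $\int_1^{+\infty}\frac{da}{a T_\theta(a)}$, which diverges precisely by the admissibility of $\theta$ in Definition \ref{germi}. I expect the main obstacle — or at least the point requiring the most care — to be the sharp linear-in-$p$ Calder\'on--Zygmund bound on the bounded domain with only $C^2$ boundary, together with checking that the harmonic/topological corrections in the Biot--Savart law genuinely do not spoil this (they are smooth inside and controlled by $\|v\|_{L^2}$, but one must phrase the estimate so the $\bY_\theta$-norm, which includes the $L^2$ part, appears cleanly). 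Everything after that — the dyadic Morrey estimate and the optimization in $\epsilon$ — is the mechanical heart that makes the function $T_\theta$ appear, and is where the definition of $T_\theta$ reveals itself as tailored exactly to this computation.
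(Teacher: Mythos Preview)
The paper does not supply its own proof of this theorem: it is stated as a result of Yudovich \cite{yudo95} and simply cited. The paper does, however, isolate the key analytic ingredient elsewhere (Lemma \ref{triv2} and the surrounding discussion), namely the sharp linear-in-$p$ Calder\'on--Zygmund estimate $\|v\|_{W^{1,p}(\Omega)}\le cp\|\curl v\|_{L^p(\Omega)}+c\sum_i|\Gamma_i(v)|$, and remarks explicitly that this $p$-dependence ``was crucial'' in Yudovich's proof of Theorem \ref{mod}.

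Your outline is correct and is precisely the standard Yudovich argument: linear-in-$p$ elliptic bound, Morrey embedding $C^{0,1-2/p}\hookleftarrow W^{1,p}$ to obtain $|v(x)-v(y)|\le C c_f\, p\,\theta(p)\, h^{1-2/p}$, then the substitution $\epsilon=1/p$ and infimum over $\epsilon\in(0,1/p_0]$ which manufactures $T_\theta(h^{-2})$ by its very definition; the Osgood check via the change of variable $a=h^{-2}$ is also right. Your caveats about the harmonic part are addressed in the paper by Lemma \ref{charlot} and Lemma \ref{triv2}: the circulations contribute a term independent of $p$, so they do not interfere with the optimization. In short, there is nothing to compare --- the paper defers to \cite{yudo95}, and your sketch reproduces that proof.
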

Let us stress that the function $\mu$ in Theorem \ref{mod} is independent of time.
\begin{Remark}
Theorem \ref{mod} applies in particular for  $\theta =  \theta_0$ (bounded vorticity), and we recover the  well-known fact since  \cite{yudo63} that we can take an Osgood modulus of continuity of the form 
$$ \mu (h):=  C  \| \omega_0  \|_{L^{\infty}  ( \Omega  )} h  \log (h^{-2} ), \text{ with } C >0 \text{  (depending only on } \Omega).$$
\end{Remark}
It is therefore classical (see for example \cite{bcd}) that, for $u$ as in Theorem \ref{mod},   there exists a unique corresponding flow map  $ \Phi$ continuous from  $\R_+  \times \Omega$ to $\Omega$ such that   
 \begin{eqnarray*}
  \Phi(t,x)  =  x + \int^{t}_0 u (s, \Phi(s,x) ) ds .
 \end{eqnarray*}
   This relies on the Osgood Lemma that we recall above under a form appropriated for the sequel. 
\begin{Lemma}
\label{Olemma}
 Let $\rho$ be a measurable function from $ \lbrack 0, T  \rbrack $ into $ \lbrack 0, a  \rbrack $, $\mu$ a modulus of continuous on  $ \lbrack 0, a  \rbrack $, $c \in  ( 0, +\infty)$ and assume that for all $t  \in  \lbrack 0,T  \rbrack $,
 \begin{eqnarray*}
 \rho (t) \leqslant c + \int_0^t  \mu ( \rho (s)) ds .
 \end{eqnarray*}
Then for all $t  \in  \lbrack 0,T  \rbrack $,
 \begin{eqnarray*}
  \int_c^{ \rho (t)}   \frac{dh}{ \mu(h) }  \leqslant t .
 \end{eqnarray*}
 \end{Lemma}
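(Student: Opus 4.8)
The plan is to dominate $\rho$ by the running bound $\psi(t) := c + \int_0^t \mu(\rho(s))\,ds$, convert the integral inequality into a differential inequality for $\psi$, and integrate it after separating variables; this is the classical argument behind the Osgood lemma. Since $\mu$ is increasing and continuous on $[0,a]$ it is bounded there by $\mu(a)$, and $\rho$ takes its values in $[0,a]$, so $\mu\circ\rho$ is measurable and bounded; hence $\psi$ is Lipschitz, in particular absolutely continuous, on $[0,T]$, with $\psi'(t)=\mu(\rho(t))$ for almost every $t$. The hypothesis says precisely $\rho(t)\le\psi(t)$ for all $t$, so the monotonicity of $\mu$ yields $\psi'(t)=\mu(\rho(t))\le\mu(\psi(t))$ for a.e.\ $t$, provided $\psi(t)$ remains in $[0,a]$ so that the right-hand side is defined.

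First I would dispose of the degenerate cases. If $c\ge a$, or, at a given time $t$, if $\rho(t)\le c$, then $\int_c^{\rho(t)}dh/\mu(h)$ is nonpositive and the conclusion is immediate; so from now on I assume $c<a$ and fix a time $t$ with $\rho(t)>c$. Next I would set $s_0:=\sup\{\,s\in[0,T]:\psi(s)\le a\,\}$; this supremum is well defined since $\psi(0)=c<a$ puts $0$ in the set, and by continuity of $\psi$ one has $\psi(s)\le a$ throughout $[0,s_0]$, with $\psi(s_0)=a$ as soon as $s_0<T$. On the interval $[0,s_0]$ the differential inequality above is valid, so dividing by $\mu(\psi(s))>0$ and integrating from $0$ to any $\tau\le s_0$, together with the change of variables $h=\psi(s)$, gives
\[
\int_c^{\psi(\tau)}\frac{dh}{\mu(h)}=\int_0^{\tau}\frac{\psi'(s)}{\mu(\psi(s))}\,ds\le\tau .
\]

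Finally I would propagate this estimate to the whole of $[0,T]$. When $t\le s_0$, the bound $\rho(t)\le\psi(t)$ and the positivity of $1/\mu$ give $\int_c^{\rho(t)}dh/\mu(h)\le\int_c^{\psi(t)}dh/\mu(h)\le t$. When $t>s_0$ --- which can only happen if $s_0<T$, hence $\psi(s_0)=a$ --- one has $\rho(t)\le a=\psi(s_0)$, and therefore $\int_c^{\rho(t)}dh/\mu(h)\le\int_c^{\psi(s_0)}dh/\mu(h)\le s_0\le t$. This would complete the proof. The only point that needs care is that the majorant $\psi$ may exit the domain $[0,a]$ of $\mu$ (it does, for instance, in the borderline case $\mu(h)=h$), and the threshold $s_0$ is introduced precisely to contain this; the reduction from a merely measurable $\rho$ to the absolutely continuous majorant $\psi$ is routine.
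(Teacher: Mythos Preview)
The paper does not actually prove this lemma: it is simply recalled there as the classical Osgood lemma, ``under a form appropriated for the sequel,'' with no argument given. Your proof is correct and is the standard one --- pass to the absolutely continuous majorant $\psi(t)=c+\int_0^t\mu(\rho(s))\,ds$, derive the differential inequality $\psi'\le\mu(\psi)$ from the monotonicity of $\mu$, separate variables, and integrate. Your explicit treatment of the degenerate cases and of the possibility that $\psi$ leaves $[0,a]$ (via the stopping time $s_0$) is careful and appropriate; there is nothing to add.
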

Let $T>0$. Since the  modulus of continuous $ \mu$ provided by Theorem \ref{mod} is an Osgood modulus there exists $ \tilde{a} \in (0,a)$ such that 
 \begin{eqnarray*}
    \int_{\tilde{a}}^{a}   \frac{dh}{ \mu(h) } \geqslant \kappa T , \text{ where }  \kappa :=  \|  u \|_{L^\infty  ([0, +\infty ), C_\mu ( \Omega  ))} .
 \end{eqnarray*}
   For any $t  \in    \lbrack 0,T  \rbrack $, for any $h \in  (0, \tilde{a}  \rbrack$ there exists an unique $\Gamma_t (h)  \in   \lbrack h, a  \rbrack $
   such that  
 \be 
 \label{modulus}
    \int_{h}^{\Gamma_t (h)}   \frac{dh}{ \mu(h) } =  \kappa  t .
 \ee
   In addition, for any $t  \in    \lbrack 0,T  \rbrack $, extended by $\Gamma_t (0) = 0$, the function  $\Gamma_t $ is a modulus of continuity.
   Furthermore, we have the following.
\begin{Lemma}
\label{Gamma}
For  any $t  \in    \lbrack 0,T  \rbrack $, the  flow map $ \Phi(t,.)$ at time $t$ belongs to $C_{\Gamma_t} (\Omega)$.
 \end{Lemma}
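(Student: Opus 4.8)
The plan is to control the distance between two trajectories by a Gronwall-type argument built on the Osgood Lemma. Fix $t \in [0,T]$ and two points $x \neq y$ in $\Omega$ with $\|x-y\| \leq \tilde{a}$, and set $\rho(s) := \|\Phi(s,x) - \Phi(s,y)\|$ for $s \in [0,t]$. Since $\Phi$ is continuous and valued in the bounded set $\Omega$, the function $\rho$ is continuous and takes values in $[0,\text{diam}(\Omega)]$; enlarging $a$ if necessary (which, as noted in the Remark above, changes nothing essential), we may assume $\mu$ is defined on an interval containing this range, so that all the quantities below are meaningful.

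Subtracting the integral identity defining $\Phi$ (recalled just before Lemma \ref{Olemma}) at $x$ and at $y$, and using that $u(s,\cdot) \in C_\mu(\Omega)$ with $\|u(s,\cdot)\|_{C_\mu(\Omega)} \leq \kappa$ for a.e.\ $s$, I would obtain
\begin{eqnarray*}
\rho(t) \leq \|x-y\| + \int_0^t \| u(s,\Phi(s,x)) - u(s,\Phi(s,y)) \| \, ds \leq \|x-y\| + \int_0^t \kappa\, \mu(\rho(s)) \, ds .
\end{eqnarray*}
Applying Lemma \ref{Olemma} with $c = \|x-y\|$ and the modulus of continuity $\kappa\mu$ in place of $\mu$ then gives $\int_{\|x-y\|}^{\rho(t)} \frac{dh}{\kappa\mu(h)} \leq t$, that is,
\begin{eqnarray*}
\int_{\|x-y\|}^{\rho(t)} \frac{dh}{\mu(h)} \leq \kappa t .
\end{eqnarray*}

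On the other hand, the defining relation \eqref{modulus} of $\Gamma_t$ reads $\int_{\|x-y\|}^{\Gamma_t(\|x-y\|)} \frac{dh}{\mu(h)} = \kappa t$, which is legitimate precisely because $\|x-y\| \in (0,\tilde{a}]$. Since $h \mapsto \int_{\|x-y\|}^{h} \frac{dh'}{\mu(h')}$ is strictly increasing, comparing the last two displays forces $\rho(t) \leq \Gamma_t(\|x-y\|)$, i.e.\ $\|\Phi(t,x) - \Phi(t,y)\| \leq \Gamma_t(\|x-y\|)$. Since $x, y$ and $t$ were arbitrary with $\|x-y\| \leq \tilde{a}$, and $\Gamma_t(0)=0$, this is exactly the statement that $\Phi(t,\cdot) \in C_{\Gamma_t}(\Omega)$, with semi-norm at most $1$.

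The argument is short once Lemma \ref{Olemma} is available; the only delicate points are bookkeeping about domains: one must check that $\rho$ stays within the interval on which $\mu$ is defined so that the integral inequality for $\rho$ makes sense (this uses the boundedness of $\Omega$ together with the freedom in choosing $a$), and one must restrict the comparison with $\Gamma_t$ to scales $h \in (0,\tilde{a}]$, which is precisely why $C_{\Gamma_t}(\Omega)$ --- whose semi-norm only tests pairs with $\|x-y\| \leq \tilde{a}$ --- is the right target space. Notably, nothing about $u$ beyond its time-uniform spatial modulus of continuity $\mu$ enters the proof.
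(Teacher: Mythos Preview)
Your proof is correct and follows essentially the same approach as the paper: subtract the integral identities for $\Phi(\cdot,x)$ and $\Phi(\cdot,y)$, use the uniform modulus of continuity of $u$ to get an Osgood-type integral inequality for $\rho(s)=\|\Phi(s,x)-\Phi(s,y)\|$, apply Lemma~\ref{Olemma}, and compare with the defining relation~\eqref{modulus} of $\Gamma_t$. Your added bookkeeping about the domain of $\mu$ and the restriction to scales $\|x-y\|\leq\tilde a$ is a welcome clarification that the paper leaves implicit.
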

   \begin{proof}
   Let $x,y$ be in $\Omega$ with  $0 < \| x - y \|  \leqslant \tilde{a}$. Using Theorem \ref{mod} we get that for any $t  \in    \lbrack 0,T  \rbrack $, 
 \begin{eqnarray*}
 \| \Phi(t,x)  -   \Phi(t,y) \|  \leqslant \| x - y \| +  \kappa  \int_0^t  \mu (  \|   \Phi(s,x) - \Phi(s,y)  \| ) ds .
   \end{eqnarray*}
   Thanks to the  Lemma \ref{Olemma}, we infer that for any $t  \in    \lbrack 0,T  \rbrack $, 
 \begin{eqnarray*} 
    \int_{  \| x - y \|}^{  \|   \Phi(t,x)  -   \Phi(t,y)  \|  }   \frac{dh}{ \mu(h) }  \leqslant  \kappa   t ,  \ 
   \text{ and thus  }
   \|   \Phi(t,x)  -   \Phi(t,y)  \|  \leqslant \Gamma_t (\| x - y \| ).
     \end{eqnarray*}
   \end{proof}
     At this point we can already say that for any $x\in \Omega$, the curve $t \mapsto  \Phi(t,x) $ is absolutely continuous hence differentiable almost everywhere with  $\partial_t  \Phi(t,x)  = u (t, \Phi(t,x) )$.
In addition uniqueness implies that the flow satisfies the Markov semigroup property.
Finally for any $t  \in  \lbrack 0,T \rbrack $, the flow map  $ \Phi(t,\cdot) $ at time $t$ is a volume-preserving homeomorphism.

Let us now have a deeper look at the smoothness in space of the flow map.
We start with recalling the following.
\begin{Remark}
 \label{bahou}
In the particular case where the initial vorticity is bounded  (when $\theta = \theta_0 $) it is well-known that 
 there exists $c >0$ such that for any $t > 0$, $ \Phi(t,\cdot) $ lies in the  H\"older space $C^{0, \exp (-ct  \| \omega_0  \|_{L^{\infty}  ( \Omega  )})}$.
 An example by Bahouri and Chemin  \cite{BahouriChemin} shows that this estimate is actually optimal.
   \end{Remark}
\begin{Lemma}
\label{loglog}
For $\theta = \theta_m $, as in 
 \eqref{thetam}, with   $m \in \N^*$,  the modulus of continuity 
 $\Gamma_t $ of the flow map satisfies
 \begin{eqnarray}
 \label{log2}
  \Gamma_t  (h) \leq (\exp^m (( \log^m (h^{-2}) )^{\exp (-2C \kappa t) } ))^{ -\frac{1}{2} } .
   \end{eqnarray}
 \end{Lemma}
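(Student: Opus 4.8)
The plan is to start from the defining relation \eqref{modulus} for $\Gamma_t$ and make the change of variables that linearizes the integral in the case $\theta=\theta_m$. First I would invoke the bound \eqref{maj} from Theorem \ref{mod}: since $\theta_m$ is admissible, $\mu(h)\leq C h T_{\theta_m}(h^{-2})$, and combining this with the elementary estimate \eqref{TT} applied to $a=h^{-2}$ gives $\mu(h)\leq C' h\log(h^{-2})\cdot\theta_m(\log(h^{-2}))$ for $h$ small, i.e. $\mu(h)\leq C' h\,\log(h^{-2})\log^2(h^{-2})\cdots\log^{m+1}(h^{-2})$. Since only the behavior of $\mu$ near $0$ matters (cf. the Remark preceding Lemma \ref{Olemma}), and since replacing $\mu$ by a larger Osgood modulus only enlarges $\Gamma_t$, I may assume $\mu(h)= C' h\prod_{j=1}^{m+1}\log^{j}(h^{-2})$, possibly after shrinking $a$.

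Next I would compute $\int_h^{\Gamma_t(h)} \frac{dk}{\mu(k)}$ explicitly. The substitution $p=k^{-2}$ turns $\frac{dk}{k}$ into $-\tfrac12\frac{dp}{p}$, so that $\int \frac{dk}{\mu(k)} = -\frac{1}{2C'}\int \frac{dp}{p\,\theta_{m+1}'(\cdots)}$ — more precisely, by the computation \eqref{calcul} with $m$ replaced by $m+1$, one gets $\int_{h}^{\Gamma_t(h)}\frac{dk}{\mu(k)} = \frac{1}{2C'}\bigl(\log^{m+2}(h^{-2}) - \log^{m+2}(\Gamma_t(h)^{-2})\bigr)$ for $h$ small (note the direction of the inequality $\Gamma_t(h)\geq h$ means $\Gamma_t(h)^{-2}\leq h^{-2}$, so this difference is nonnegative). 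Setting this equal to $\kappa t$ and solving for $\Gamma_t(h)$ yields
\begin{eqnarray*}
\log^{m+2}(\Gamma_t(h)^{-2}) = \log^{m+2}(h^{-2}) - 2C'\kappa t,
\end{eqnarray*}
hence, peeling off the outermost logarithm,
\begin{eqnarray*}
\Gamma_t(h)^{-2} = \exp^{m+1}\!\bigl(\log\bigl(\log^{m+1}(h^{-2})\bigr) - 2C'\kappa t\bigr) = \exp^{m}\!\bigl(\bigl(\log^{m}(h^{-2})\bigr)^{\exp(-2C'\kappa t)}\bigr),
\end{eqnarray*}
where in the last step I used $\exp\bigl(e^{-\lambda}\log y\bigr)=y^{e^{-\lambda}}$ with $y=\log^{m}(h^{-2})$ and $\lambda=2C'\kappa t$. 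Taking the $-\tfrac12$ power and absorbing the precise constant $C'$ into the $C$ of the statement gives exactly \eqref{log2}.

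The step I expect to be the main obstacle is the bookkeeping around the threshold $\tilde a$ and the fact that \eqref{modulus} only holds for $h\in(0,\tilde a]$ and the identity \eqref{calcul} only for arguments beyond $\exp^{m+1}(1)$: one must check that for $h$ small enough (depending on $t$, $\kappa$, $m$) all iterated logarithms appearing are well-defined and positive, that $\Gamma_t(h)$ stays in $[h,a]$, and that the constant $C$ can be chosen uniformly in $t\in[0,T]$ — this forces a small $h$, but that is harmless since $\Gamma_t$ is a modulus of continuity and only its germ at $0$ is relevant. A secondary, purely cosmetic point is tracking whether the constant inside the exponent should be $2C'$ or $2C'\kappa$ after the $k^{-2}=p$ substitution; writing the final constant simply as $C$ (as in the statement) sidesteps this.
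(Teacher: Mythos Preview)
Your proposal is correct and follows essentially the same route as the paper: combine \eqref{TT} with \eqref{maj} to bound $\mu(h)\le C' h\,\theta_{m+1}(h^{-2})$, substitute $p=k^{-2}$ and use \eqref{calcul} to evaluate the integral in \eqref{modulus}, then invert the resulting inequality on $\log^{m+2}$. The paper does this in two lines (keeping the inequality rather than passing to an auxiliary larger modulus) and dismisses the threshold and constant bookkeeping you flag with ``the result is then straightforward''; your treatment of those points is more careful but not different in substance.
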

   \begin{proof}
   Combining  \eqref{TT} and  \eqref{maj}, we get 
  $\mu (h) \leq  Ce h \theta_{m+1} (h^{-2})$.
Hence, using  \eqref{modulus}, 
 \bes
\kappa t =    \int_{h}^{\Gamma_t (h)}   \frac{dh}{ \mu(h) } \geq -  \tilde{C}^{-1}  \int_{h^{-2}}^{\Gamma_t (h)^{-2}}   \frac{dp}{  p \cdot  \theta_{m+1} ( p )  }
\\ =  \tilde{C}^{-1} (-  \log^{m+2}  (\Gamma_t (h)^{-2} ) +  \log^{m+2}   (h^{-2} )),
 \ees
by \eqref{calcul}, with $ \tilde{C} := 2Ce$. The result is then straightforward.
 \end{proof}
   \cite{kelli},  Chapter 5  provides some examples of Yudovich's slightly unbounded  initial vorticities for which the solution to the Euler equations in the plane has an associated 
flow which lies in no H\"older  space of positive exponent for all
positive time. However for $\theta = \theta_m $, with   $m \in \N^*$,  the flow map is necessarily Dini continuous. Before to prove this, let us recall the following  definition.
\begin{Definition}[Dini modulus of continuity]
 We say that  a  modulus of continuity $\mu : \lbrack 0, a  \rbrack \rightarrow \R_+$ is a Dini modulus of continuity if 
$\int_0^{a }  \frac{\mu(h)}{ h}  dh < + \infty $.
A function which belongs to a space $C_{\mu}$ where $\mu$  is a Dini modulus of continuity is said Dini continuous.
\end{Definition}
\begin{Lemma}
\label{mdini}
For $\theta = \theta_m $ with  $m \in \N^*$, $r \in (0,1\rbrack$ and $t\geqslant 0$,   the modulus  of continuity
 $\Gamma_t^r $ of the flow map is Dini.
      \end{Lemma}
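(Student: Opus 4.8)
The plan is to feed the explicit bound on $\Gamma_t$ from Lemma~\ref{loglog} into the Dini integral and reduce everything to the convergence of one real integral built from a tower of exponentials. First I would note that $\Gamma_t^r$ is indeed a modulus of continuity when $r\in(0,1\rbrack$ (being continuous, increasing and vanishing at $0$, as the $r$-th power of the modulus $\Gamma_t$), so that the statement is meaningful, and then record that, by \eqref{log2} with $\beta:=\exp(-2C\kappa t)\in(0,1\rbrack$,
\[
\Gamma_t^r(h)\ \leq\ \bigl(\exp^{m}\bigl((\log^{m}(h^{-2}))^{\beta}\bigr)\bigr)^{-r/2}.
\]
After the change of variable $p=h^{-2}$, for which $\frac{dh}{h}=-\frac{1}{2}\frac{dp}{p}$ and a neighbourhood of $0$ is sent to a neighbourhood of $+\infty$, the Dini condition $\int_0^{a}\frac{\Gamma_t^r(h)}{h}\,dh<+\infty$ will follow once one shows
\[
I\ :=\ \int^{+\infty}\frac{dp}{p\,\bigl(\exp^{m}((\log^{m}p)^{\beta})\bigr)^{r/2}}\ <\ +\infty .
\]

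Next I would perform the $m$-fold logarithmic substitution $v=\log^{m}p$, using that $\frac{d}{dp}\log^{m}p=\bigl(p\,\theta_{m-1}(p)\bigr)^{-1}$ with $\theta_{m-1}$ as in \eqref{thetam} (and $\theta_{0}\equiv1$), together with $\log^{k}p=\exp^{m-k}(v)$ for $1\leq k\leq m$. This replaces $\frac{dp}{p}$ by $\theta_{m-1}(p)\,dv=\prod_{k=1}^{m-1}\exp^{m-k}(v)\,dv$, the empty product being $1$ when $m=1$, and the denominator by $\exp\bigl(\frac{r}{2}\exp^{m-1}(v^{\beta})\bigr)$; hence, after discarding a finite piece if needed so that $\log^{m}$ is well defined throughout the range, $I$ equals
\[
\int^{+\infty}\Bigl(\,\prod_{k=1}^{m-1}\exp^{m-k}(v)\Bigr)\,\exp\!\Bigl(-\tfrac{r}{2}\,\exp^{m-1}(v^{\beta})\Bigr)\,dv .
\]

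The one point requiring care is the elementary comparison $\exp^{m-1}(v^{\beta})/\exp^{m-2}(v)\to+\infty$ as $v\to+\infty$ (with the convention $\exp^{0}(v)=v$), which holds precisely because $\beta>0$. When $m=1$ there is no prefactor and one simply checks $\int^{+\infty}e^{-\frac{r}{2}v^{\beta}}\,dv<+\infty$ by the substitution $s=v^{\beta}$. When $m\geq2$ I would establish the comparison by induction on $m$: the base case $m=2$ is $e^{v^{\beta}}/v\to+\infty$, i.e. $v^{\beta}-\log v\to+\infty$, and the passage from $m$ to $m+1$ is obtained by composing both sides of $\exp^{m-1}(v^{\beta})\geq2\exp^{m-2}(v)$ (valid for $v$ large) with the increasing map $\exp$, which even yields $\exp^{m}(v^{\beta})\geq(\exp^{m-1}(v))^{2}$ for $v$ large. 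Since $\log\bigl(\prod_{k=1}^{m-1}\exp^{m-k}(v)\bigr)\leq(m-1)\exp^{m-2}(v)$ and $\exp^{m-2}(v)\geq v\geq2\log v$ for $v$ large, the comparison forces the exponent in the last integrand to be $\leq-2\log v$ for $v$ large; the integrand is thus eventually dominated by $v^{-2}$, so $I<+\infty$. No genuine obstacle arises — the effort is bookkeeping of the exponential/logarithmic tower — and the delicate regime is $t$ large (hence $\beta$ small), which remains covered because $\beta$ stays strictly positive for every finite $t$.
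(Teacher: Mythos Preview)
Your proof is correct and follows essentially the same route as the paper: both reduce the Dini integral via the substitution $v=\log^{m}(h^{-2})$ (you do it in two steps, $p=h^{-2}$ then $v=\log^{m}p$) to the same integral $\int^{+\infty}\prod_{i=1}^{m-1}\exp^{i}(v)\cdot\exp\bigl(-\frac{r}{2}\exp^{m-1}(v^{\beta})\bigr)\,dv$. The paper simply asserts this integral is finite, whereas you supply the details of the comparison $\exp^{m-1}(v^{\beta})\gg\exp^{m-2}(v)$; your bookkeeping is sound.
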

  \begin{proof}
  Let  $m \in \N^*$ and $t \geqslant 0$. We set $u :=  \log^m (h^{-2}) $ so that,  using \eqref{log2}, we get 
  \begin{eqnarray*}
2  \int_0^{a }  \frac{\Gamma_t^r (h)}{ h}  dh  \leqslant 
   \int_{  \log^m (a^{-2}) }^{+\infty  }  \prod_{i=1}^{m-1} \exp^i (u) \cdot  \exp (-\frac{r}{ 2}    \exp^{m-1} (u^{\exp (-2Ct) }  )) du < +  \infty .
    \end{eqnarray*}
  \end{proof}
%
 We do not know if the function $\Gamma_t$, with $t \geq 0$, defined by  \eqref{modulus}, for $h$ small enough, is necessarily a Dini modulus of continuity when $\mu : \lbrack 0, a  \rbrack \rightarrow \R_+$ is an Osgood modulus of continuity, in other terms is the flow map generated by an Osgood vector field is necessarily Dini continuous.
  
 \section{Statement of the results}
 \label{gene}
 
 Our analysis applies as well to  the case of multiply connected domains.
Let  us therefore assume that $\Omega $ has as internal boundaries some piecewise smooth Jordan curves 
$C_1$, ..., $C_d$, and is bounded externally by a closed curve $C_0$.
We choose the positive directions on the curves $C_0$, $C_1$..., $C_d$ such that the domain  $\Omega $ is always on the left (so that the  curves $C_1$, ..., $C_d$  are oriented clockwise and the curve $C_0$ is oriented counter-clockwise). 
For a smooth enough function $f$, we denote  $ \Gamma_i (f)$ 
 the circulations of $f$ around the curve $C_i$, for $1 \leqslant  i \leqslant  d$.

 \subsection{Smoothness of the trajectories  for bounded or ``slightly unbounded" vorticities }
 \label{gene1}

We are now ready to state the first general result hinted in the introduction.
\begin{Theorem}
 \label{start22}
Let   $\theta$  be an admissible germ and assume that the initial data $u_0$ is in  $\bY_\theta$.
Assume that the boundary  $\partial \Omega$ is $C^\infty$ (respectively Gevrey of order   $M \geq 1$).
Then the flow map $ \Phi$ is, for any $r \in (0,1)$, for any $T >0$,   $C^\infty$  from $ \lbrack 0, T  \rbrack $ to $ C_{  \Gamma_T^r  } (\Omega)$  (resp. satisfies there exists $L >0$ depending only on $\Omega$ such that for any $t \in \lbrack 0, T  \rbrack$, for any $k \in \N$,
 \begin{eqnarray}
 \label{start22beurk}
      \| \partial^{k +1 }_t \Phi (t,.) \|_{ C_{\Gamma_t^r }   (\Omega)}
       \leqslant  \frac{(k!)^{M+1}   {L}^{k+1}}{(1-r)^{k}}    \Big(  \frac{k!}{(1-r)^{k+1}} \theta\big( \frac{2(k+1)}{1-r} \big)^{k+1} +   \sum_{i=1}^d  | \Gamma_i ( u_0)  |^{k+1}  \Big).
\end{eqnarray}
\end{Theorem}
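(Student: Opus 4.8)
The plan is to reduce the time-smoothness of $\Phi$ to the smoothness of the velocity field along the flow, and to iterate. Since $\partial_t \Phi(t,x) = u(t,\Phi(t,x))$, differentiating $k$ times in $t$ produces, by the Faà di Bruno formula, a finite sum of terms built from the material derivatives $(D_t^j u)(t,\Phi(t,x))$, $j\le k$, composed with the flow, multiplied together according to partitions of $\{1,\dots,k\}$. So the heart of the matter is a good quantitative description of $D_t^j u$, where $D_t = \partial_t + u\cdot\nabla$. From the Euler equations $D_t u = -\nabla p$, and more generally each $D_t^j u$ is $-\nabla$ of some pressure-like quantity $\pi_j$ plus lower-order commutator terms coming from $[D_t,\nabla]$; one shows by induction that $D_t^j u = -\nabla \pi_j + (\text{terms already controlled})$, and that $\pi_j$ solves an elliptic problem $-\Delta \pi_j = (\text{polynomial in }\nabla u,\dots)$ with a Neumann-type boundary condition carrying the geometry of $\partial\Omega$, together with prescribed circulations around $C_1,\dots,C_d$ (these are why the $\Gamma_i(u_0)$ appear: the harmonic part of $u$ is a fixed linear combination of the harmonic fields with circulations $\Gamma_i(u_0)$, and it is transported only through composition with $\Phi$). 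The key structural point, going back to Chemin, is that the nonlinearities in these elliptic problems involve only first spatial derivatives of the iterates, so no loss of spatial regularity accumulates: the natural space in which everything lives is $C^{1,r'}$ for the spatial variable, and after composing with $\Phi\in C_{\Gamma_t}$ one lands in $C_{\Gamma_t^r}$ by Lemma \ref{trivcomp}.

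Concretely I would proceed as follows. First, set up the elliptic estimates: using that $\partial\Omega$ is $C^\infty$ (resp. Gevrey $M$), establish $C^{1,r'}$ (resp. Gevrey-$M$ analytic-type) bounds for the solution operator of the Neumann problem on $\Omega$ with given $L^p$ data and given circulations, tracking the dependence on $p$ — this is where the germ $\theta$ enters, since $\|\omega(t)\|_{L^p}\le c_f\,\theta(p)$ uniformly in $t$ and the Calderón–Zygmund constant grows linearly in $p$. Second, run the induction on $j$ producing $D_t^j u$ as $-\nabla\pi_j$ plus controlled remainder, and bound $\|D_t^j u\|$ in a scale of spaces $C^{1,r_j}$ with radii $r_j$ shrinking geometrically (the device $r_j = r + (1-r)(1 - j/N)$-type loss of Hölder exponent, or rather a loss of a controlled fraction of $1-r$ at each of finitely many steps, to run a Cauchy-estimate-style argument). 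Third, combine via Faà di Bruno to pass from material derivatives $D_t^j u$ to genuine time derivatives $\partial_t^{k+1}\Phi$, which only introduces combinatorial factors (number of partitions of $k$), and finally compose with $\Phi(t,\cdot)\in C_{\Gamma_t}(\Omega)$ and invoke Lemma \ref{trivcomp} to land in $C_{\Gamma_t^r}(\Omega)$. Collecting constants yields a bound of the shape $\frac{(k!)^{M+1}L^{k+1}}{(1-r)^k}$ times the two competing contributions — the $\theta\big(\frac{2(k+1)}{1-r}\big)^{k+1}$ term from the $L^p$-growth of the vorticity with $p$ chosen $\sim (k+1)/(1-r)$ (this optimization of $p$ against the Hölder loss is exactly what produces that argument in $\theta$), and the $\sum_i|\Gamma_i(u_0)|^{k+1}$ term from the harmonic/circulation part. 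The $C^\infty$ statement is then the special case where $\theta=\theta_0$ and one does not track factorial growth.

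The main obstacle I expect is the bookkeeping of the two intertwined losses — the loss of spatial Hölder exponent and the growth in the integrability index $p$ — in the inductive elliptic estimates, while keeping the constants in the exact form of \eqref{start22beurk}. One must choose, at step $k$, to work in $L^{p_k}$ with $p_k\sim k/(1-r)$ and in Hölder class with exponent decreased by $\sim (1-r)/k$, and verify that the Calderón–Zygmund / Schauder constants, raised to the relevant powers and multiplied by the combinatorial factors from Faà di Bruno and from the Leibniz-type expansion of the nonlinear elliptic right-hand sides, assemble into $(k!)^{M+1}L^{k+1}(1-r)^{-k}$ times the stated bracket; the Gevrey-$M$ regularity of $\partial\Omega$ contributes the extra $(k!)^M$ factor through the boundary-regularity of the Neumann solution operator, which accounts for the final Gevrey order $M+2$ (the "$+2$" being the two spatial derivatives one spends: one to go from the stream function / pressure to the velocity, and one more in the composition/transport structure). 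A secondary technical point is justifying all the formal manipulations (differentiating under the integral in \eqref{flow}, the chain rule for $\Phi\in C_{\Gamma_t}$ only) at the level of regularity available, which is handled by the a priori estimates themselves via a standard bootstrap/approximation argument.
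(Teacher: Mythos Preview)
Your outline has the right architecture --- iterate material derivatives $D^k u$, control them via elliptic estimates, then compose with the flow --- but there is a genuine gap in the function spaces you propose for the induction, and one unnecessary complication.

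The gap: you propose to run the induction in H\"older spaces $C^{1,r_j}$ with shrinking $r_j$. For Yudovich data this fails already at the base: $u$ is only log-Lipschitz, so $\nabla u\notin L^\infty$, let alone $C^{0,r'}$. The nonlinear sources in the elliptic problems (whether you use div--curl or the pressure Neumann problem) are products of the form $\nabla D^{\alpha_1}u\cdots\nabla D^{\alpha_s}u$, and for Yudovich velocities these cannot be placed in any $L^\infty$-based space. The paper's remedy is to run the entire induction in the Sobolev scale: fix $p_2\ge 2/(1-r)$ and prove recursively
\[
\|D^k u\|_{W^{1,\,p_2/(k+1)}(\Omega)}\ \le\ p_2^{\,k}\,\frac{(k!)^M L^k}{(k+1)^2}\,\|u\|_{W^{1,p_2}(\Omega)}^{\,k+1},
\]
using the H\"older inequality $\|\prod_i f_i\|_{L^{p_2/(k+1)}}\le\prod_i\|f_i\|_{L^{p_2/(\alpha_i+1)}}$ to estimate the products, and the Calder\'on--Zygmund div--curl estimate of Lemma~\ref{triv} (with its explicit linear-in-$p$ constant) for the elliptic step. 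One then reapplies this with $p_2(k+1)$ in place of $p_2$ to get a uniform $W^{1,p_2}$ bound on $D^k u$, and only at the very end does Morrey convert this into a $C^{0,r}$ bound. Your remark ``choose $p_k\sim k/(1-r)$'' is the right instinct, but it must be implemented as a decreasing-$p$ Sobolev ladder, not a shrinking-exponent H\"older ladder.

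Two smaller points. First, no Fa\`a di Bruno is needed: since $\partial_t[f(t,\Phi(t,x))]=(Df)(t,\Phi(t,x))$, induction gives exactly $\partial_t^{k+1}\Phi=(D^k u)\circ\Phi$ --- a single term, not a partition sum. Second, the Gevrey-$M$ factor does not enter through an abstract ``boundary regularity of the Neumann solution operator''; in the paper it comes concretely from the boundary identity $\hat{n}\cdot D^k u=H^k[u]=\sum c^3_k(\theta)\,\nabla^s\rho\{D^{\alpha_1}u,\dots,D^{\alpha_s}u\}$ of Lemma~\ref{P1}, where the derivatives of the boundary-distance function $\rho$ carry the bound $\|\nabla^s\rho\|\le c_\rho^{\,s}(s!)^M$. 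The pressure identity you invoke does appear (Lemma~\ref{pressure}), but only to control the harmonic projection $\Pi D^k u=\Pi K^k[u]$, not as the main elliptic engine.
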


To deduce Theorem \ref{start2} from  Theorem \ref{start22} it suffices to take into account  Remark  \ref{class} and  Remark   \ref{bahou}. 
Let us stress that Theorem \ref{start22} fails to prove that the flow is  $C^\infty$  from $ \lbrack 0, T  \rbrack $ to $ C_{  \Gamma_T  } (\Omega)$, for the estimate  \ref{start22beurk} blows up when $r$ tends to $1$.
In the particular case where  $\theta :=  \theta_m $, with   $m \in \N$, the estimate \eqref{start22beurk} and Lemma \ref{mdini} yield that the flow map is, for any $\eps \in (0,1)$, 
 for any $T >0$,  Gevrey of order   $M + 2 + \eps$  from $ \lbrack 0, T  \rbrack $ to the space $C_D  (\Omega)$ of Dini continuous functions.

 \subsection{Extra local  H\"older regularity  propagates smoothly }
 \label{gene2}

In this section we deal with the case where the initial vorticity is locally  H\"older continuous.
We will prove first the following result:
\begin{Theorem} \label{localreg}
Assume that  the boundary  $\partial \Omega$  of  the domain is $C^2$.
Assume that the initial data $u_0$ is in  $\bY_{\theta_m }$, with   $m \in \N$.
Assume that $\Omega_0$ is a  open subset such that 
 $\overline{ \Omega_0 } \subset  \Omega $. Assume that  $\omega_0 |_{ {  \Omega_0}}$ is in $C^{\lambda_0 ,r}_{\text{loc}} (  \Omega_0 )$ with $\lambda_0$ in $\N$ and $r \in (0,1)$.  Then for any $t \geqslant 0$,  the restrictions $\omega (t,\cdot )|_{\Omega_t  }$ and $u (t,\cdot ) |_{\Omega_t  }$ of the vorticity and of the velocity to the set 
  $\Omega_t := \{  \Phi(t,x) , \/ \ x \in \Omega_0 \} $ are respectively in $C^{\lambda_0 ,r}_{\text{loc}}  ( \Omega_0 ) $ and $C^{\lambda_0 +1,r}_{\text{loc}}  ( \Omega_0 )$.
 \end{Theorem}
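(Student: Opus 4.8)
The plan is to propagate the local Hölder regularity by a bootstrap argument that follows the flow lines and uses the transport structure of the vorticity equation together with elliptic regularity for the velocity-vorticity relation. First I would fix $t > 0$ and work on a compact $K \subset \Omega_0$; since $\overline{\Omega_0} \subset \Omega$ and $\Phi(t,\cdot)$ is a volume-preserving homeomorphism with modulus of continuity $\Gamma_t$ (Lemma~\ref{Gamma}), the image $\Omega_t$ stays a positive distance from $\partial\Omega$, so all the analysis is interior and the (merely $C^2$) boundary plays no role. The base case $\lambda_0 = 0$ is the starting point: I would show that $\omega(t,\cdot)|_{\Omega_t} \in C^{0,r}_{\loc}$. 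Because $\omega(t,\Phi(t,x)) = \omega_0(x)$, we have $\omega(t,\cdot)|_{\Omega_t} = \omega_0 \circ \Phi(t,\cdot)^{-1}$, and $\Phi(t,\cdot)^{-1}$ enjoys the same kind of modulus-of-continuity bound as $\Phi(t,\cdot)$; but a Hölder modulus need not be preserved under composition with a merely Osgood homeomorphism, so instead I would argue \emph{backwards}: fix $x_1,x_2 \in \Omega_0$ and estimate $\|\Phi(t,x_1)-\Phi(t,x_2)\|$ from below in terms of $\|x_1-x_2\|$. This lower bound is obtained from the same ODE $\partial_t\Phi = u(t,\Phi)$ run backward in time, using that $u(t,\cdot)$ restricted to $\Omega_t$ is (by the induction to be set up) already at least Lipschitz there, hence a Gronwall estimate gives $\|\Phi(t,x_1)-\Phi(t,x_2)\| \geq e^{-Ct}\|x_1-x_2\|$ on $K$; combined with $\omega(t,\Phi(t,x_i))=\omega_0(x_i)$ and $\omega_0 \in C^{0,r}$ this yields $\omega(t,\cdot)|_{\Omega_t} \in C^{0,r}_{\loc}$.

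Next I would set up the induction on $\lambda_0$. Suppose we already know $\omega(t,\cdot)|_{\Omega_t} \in C^{\lambda_0-1,r}_{\loc}$ and $u(t,\cdot)|_{\Omega_t} \in C^{\lambda_0,r}_{\loc}$. The velocity is recovered from the vorticity by $u = \nabla^\perp\psi$ with $\Delta\psi = \omega$ (plus the harmonic part fixing the circulations $\Gamma_i(u_0)$, which is smooth in the interior and harmless); interior elliptic regularity (Schauder estimates) then upgrades $\omega(t,\cdot)\in C^{\lambda_0,r}_{\loc}(\Omega_t)$ to $u(t,\cdot)\in C^{\lambda_0+1,r}_{\loc}(\Omega_t)$ on a slightly shrunk compact. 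So the real content is to push the vorticity regularity up by one derivative. For that I would differentiate the identity $\omega(t,\Phi(t,x))=\omega_0(x)$, equivalently exploit that $\nabla^{\lambda_0}\omega$ and the Jacobian $D\Phi(t,\cdot)$ together satisfy a closed transport/ODE system: $D\Phi$ solves $\partial_t D\Phi(t,x) = Du(t,\Phi(t,x))\,D\Phi(t,x)$, and higher derivatives of $\Phi$ satisfy analogous linear ODEs whose coefficients involve $D^j u(t,\Phi)$ for $j \leq \lambda_0$, which by the inductive hypothesis lie in $C^{0,r}$ along the flow. Writing the chain rule (Faà di Bruno) for $D^{\lambda_0}[\omega(t,\cdot)\circ\Phi(t,\cdot)] = D^{\lambda_0}\omega_0$ lets one solve for $D^{\lambda_0}\omega(t,\cdot)$ evaluated at $\Phi(t,x)$ in terms of $D^{\lambda_0}\omega_0(x)$, lower-order derivatives of $\omega_0$, and derivatives of $\Phi$ up to order $\lambda_0$; since all the right-hand-side ingredients are $C^{0,r}_{\loc}$ (using again the backward Gronwall lower bound to transfer Hölder moduli across $\Phi(t,\cdot)^{-1}$), we conclude $\omega(t,\cdot)|_{\Omega_t}\in C^{\lambda_0,r}_{\loc}$, closing the induction. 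To make the argument for $\Phi(t,\cdot)$ itself having enough spatial smoothness ($C^{\lambda_0,r}$ on compacts of $\Omega_0$) one reruns the same flow ODE with the now-known $u(t,\cdot)|_{\Omega_t}\in C^{\lambda_0,r}_{\loc}$ coefficient and differentiates in $x$.

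The main obstacle, and the place where care is needed, is the interplay between the \emph{interior} Hölder regularity one wants and the only-Osgood global regularity of the flow: one cannot naively compose $\omega_0$ with $\Phi(t,\cdot)^{-1}$ and keep Hölder exponents. The key realization is that on a fixed compact $K \Subset \Omega_0$ the flow is in fact bi-Lipschitz (indeed $C^{\lambda_0,r}$) because $u(t,\cdot)$ is smooth \emph{in the interior region $\Omega_t$} where the vorticity is controlled — even though it is only Log-Lipschitz globally. Thus the induction must be organized so that at each stage the regularity of $u$ on $\Omega_t$ is established \emph{before} it is used to regularize $\Phi$ and then $\omega$; this is the standard coupling in such statements, and I would present it as a simultaneous induction on the triple $(\omega(t,\cdot),u(t,\cdot),\Phi(t,\cdot))$ restricted to shrinking interior compacts, with the elliptic step and the transport step alternating. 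A secondary technical point is to verify the nesting of compacts: at step $\lambda_0$ one works on $K' \Subset K \Subset \Omega_0$ with $\Phi(t,\cdot)$ mapping $K'$ into a compact of $\Omega_t$ that stays away from $\partial\Omega$ uniformly for $t\in[0,T]$, which follows from the continuity of $\Phi$ on $\R_+\times\Omega$ and the compactness of $[0,T]$. The quantitative Gevrey-type bounds on the number of derivatives are not needed here — Theorem~\ref{localreg} is purely a qualitative spatial-regularity statement — so one may keep constants implicit.
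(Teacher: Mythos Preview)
Your bootstrap is circular at the base step, and this is a genuine gap. To start you want a lower bound $\|\Phi(t,x_1)-\Phi(t,x_2)\|\ge e^{-Ct}\|x_1-x_2\|$ on $K$, which you justify by saying that $u(t,\cdot)|_{\Omega_t}$ is ``(by the induction to be set up) already at least Lipschitz there''. But nothing in your scheme produces this before you already have it: the only route you offer to $u\in C^1_{\text{loc}}(\Omega_t)$ is interior Schauder from $\omega(t,\cdot)\in C^{0,r}_{\text{loc}}(\Omega_t)$, and the only route you offer to the latter is the bi-Lipschitz bound on $\Phi$. Your ``key realization'' paragraph simply restates the desired conclusion without breaking the circle; organizing the induction ``so that the regularity of $u$ is established before it is used'' is precisely what you have not explained how to do at stage zero.

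The paper breaks the circularity with an intermediate step you are missing. Composing $\omega_0\in C^{0,r}_{\text{loc}}(\Omega_0)$ with the merely Osgood inverse flow $\Phi(t,\cdot)^{-1}$ does not give a H\"older function, but by Lemma~\ref{trivcomp} it does give $\omega(t,\cdot)\in C_{\Gamma_t^r,\text{loc}}(\Omega_t)$. The point, specific to the germs $\theta_m$ and the reason the hypothesis $u_0\in\bY_{\theta_m}$ appears, is that $\Gamma_t^r$ is a \emph{Dini} modulus (Lemma~\ref{mdini}); and Dini continuity of the vorticity suffices for interior elliptic regularity to yield $u\in C^1_{\text{loc}}(\Omega_t)$ (Lemma~\ref{SchauderDini}). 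Once $u$ is $C^1$ on $\Omega_t$, the flow and its inverse are $C^1$ there by integration of the ODE, and only then does $\omega(t,\cdot)=\omega_0\circ\Phi(t,\cdot)^{-1}$ land in $C^{0,r}_{\text{loc}}(\Omega_t)$. From this point on your Schauder/transport alternation is the standard argument (the paper defers it to \cite{BM}). Without the Dini step, or some equivalent device that extracts $C^1$ regularity of $u$ from a sub-H\"older vorticity, the induction cannot be ignited.
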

Theorem \ref{localreg} is a slight extension of  Proposition $8.3$ of \cite{BM} which deals only with the case $\theta = \theta_0$, that is with bounded vorticities.

Under the assumptions of Theorem \ref{localreg}, if the boundary is smooth,  local H\"older regularity  propagates smoothly along the flow lines.
\begin{Theorem} \label{start22localh}
Under the hypothesis of Theorem \ref{localreg}, assuming moreover that  the boundary  $\partial \Omega$ is $C^\infty$ (respectively Gevrey of order   $M \geq 1$), then 
 the flow map $\Phi$ is, for any $T >0$,  for any compact $K  \subset {\Omega_0}$, 
  $C^\infty$  from $ \lbrack 0, T  \rbrack $ to $  C^{\lambda_0 + 1,r} (K)$ (resp. satisfies there exists $L >0$  such that for any $t \in \lbrack 0, T  \rbrack$, for any $k \in \N$,
 \begin{eqnarray}
 \label{dumasla}
      \| \partial^{k +1 }_t \Phi (t,.) \|_{ C^{\lambda_0 + 1,r} (K)  }       \leqslant  
   L^{k+1} (k!)^{M+1+ (\lambda_0 +1) (r +1)}       \Big(   \| u (t,.) \|_{ C^{ \lambda_0 +1 ,r} (\underline{K}_t )} +   \| u (t,.) \|_{ W^{1, \frac{2(k+1)}{1-r} } ( \Omega  )  }    \Big)^{k+1} ,
\end{eqnarray}
where $\underline{K}_t := \{  \Phi(t,x) , \/ \ x \in \underline{K} \} $ with $\underline{K}$  a compact such that  $K  \subset \dot{\underline{K}}  \subset \underline{K} \subset {\Omega_0}$.
\end{Theorem}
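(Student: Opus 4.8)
The plan is to bootstrap from the two ingredients already in hand: the purely spatial statement of Theorem \ref{localreg}, which tells us that at each fixed time $t$ the velocity $u(t,\cdot)$ restricted to $\Omega_t$ lies in the H\"older space $C^{\lambda_0+1,r}_{\loc}(\Omega_0)$ after pulling back by the flow, and the quantitative time-regularity machinery underlying Theorem \ref{start22}, which expresses $\partial_t^{k+1}\Phi$ as an explicit multilinear expression in $u,\nabla u,\dots$ composed with $\Phi$. The key observation is that differentiating \eqref{flow} in time gives $\partial_t\Phi(t,x)=u(t,\Phi(t,x))$, and iterating via the chain rule and the transport structure $\partial_t u+u\cdot\nabla u=-\nabla p$ produces, for each $k$, a universal polynomial $P_k$ such that
\begin{eqnarray*}
\partial_t^{k+1}\Phi(t,x)=P_k\big(\{\partial_x^\beta u(t,\cdot),\partial_x^\beta\nabla p(t,\cdot),\dots\}\big)\circ\Phi(t,x),
\end{eqnarray*}
where the combinatorial weights of $P_k$ carry the $(k!)^{M+1}$-type growth coming from the boundary's Gevrey class (this is exactly the bookkeeping already performed for Theorem \ref{start22}). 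On the compact set $K\subset\dot{\underline K}\subset\underline K\subset\Omega_0$, Theorem \ref{localreg} upgrades the relevant spatial norms from the Yudovich/Log-type control to genuine $C^{\lambda_0+1,r}$ control on $\underline K_t$, while away from $\underline K$ the elliptic estimates for $u$ in terms of $\omega$ still only give the $W^{1,p}$ bounds with $p\sim (k+1)/(1-r)$ that account for the second term in \eqref{dumasla}.

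Concretely I would proceed as follows. First, record the composition formula: use Lemma \ref{trivcomp} together with the fact that $\Phi(t,\cdot)\in C_{\Gamma_t}(\Omega)$ (Lemma \ref{Gamma}) to control $C^{\lambda_0+1,r}(K)$ norms of $F\circ\Phi(t,\cdot)$ by $C^{\lambda_0+1,r}(\underline K_t)$ norms of $F$ times powers of $\|\Phi(t,\cdot)\|$; since $r\in(0,1)$ is fixed here (no blow-up as $r\to 1$ inside $\Omega_0$), this is clean. Second, establish the elliptic/ODE hierarchy for the spatial derivatives: the pressure $p$ solves $-\Delta p=\operatorname{tr}((\nabla u)^2)$ with a Neumann condition on $\partial\Omega$, so on the interior set $\underline K_t$ one gets $\nabla p\in C^{\lambda_0+1,r}$ with norms controlled, by interior elliptic regularity plus Theorem \ref{localreg}, in terms of $\|u(t,\cdot)\|_{C^{\lambda_0+1,r}(\underline K_t)}$ and global $W^{1,q}$ norms of $u$; near the boundary one only needs the $W^{1,q}$ bounds. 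Third, assemble: plug these spatial bounds into $P_k$, track the constants — the Gevrey boundary loss contributes $(k!)^M$, each time-differentiation of the transport term contributes another factorial, and the $(\lambda_0+1)(r+1)$ extra Gevrey exponent comes from the $\lambda_0+1$ spatial derivatives each producing a Hölder-interpolation loss of order $(r+1)$ in the multilinear estimate — to reach the bound \eqref{dumasla}. Finally, summability of $\sum_k \|\partial_t^{k+1}\Phi(t,\cdot)\|_{C^{\lambda_0+1,r}(K)} s^{k+1}/(k!)^{M+2+(\lambda_0+1)(r+1)}$ for small $s$ (for $C^\infty$, just finiteness for each $k$) gives the claimed Gevrey-of-order-$M+2+(\lambda_0+1)(r+1)$ regularity in time.

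The main obstacle I anticipate is the bookkeeping in the third step: one must show that composing the interior elliptic gain ($\nabla^j u$ costs roughly $\|\omega\|_{W^{j-1,q}}$ with $q\sim k/(1-r)$, whose $L^q$ norm grows like $\theta_m(q)^{?}$ or, on $\underline K_t$, is controlled by a fixed $C^{\lambda_0+1,r}$ norm) with the $k$-fold chain rule and Faà di Bruno expansion does not produce a worse factorial than $(k!)^{M+2+(\lambda_0+1)(r+1)}$, and in particular that the interplay between the number of spatial derivatives $\lambda_0+1$ (fixed) and the number of time derivatives $k$ (going to infinity) only enters through a multiplicative constant in the base $L$ and through the stated extra Gevrey exponent — not through an additional $k$-dependent factorial. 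The second subtlety is matching the two regions: one needs a cutoff $\chi$ supported in $\underline K$, equal to $1$ on $K$, and a commutator argument showing that $\chi\cdot(\text{spatial derivatives of }u)$ on $\underline K_t$ plus the globally-controlled remainder reproduces exactly the two-term right-hand side of \eqref{dumasla}; the constant $L$ will then depend on $\Omega$, on $\chi$ (hence on $K,\underline K$), and on $\lambda_0$, which is consistent with the statement. Everything else — the existence of $P_k$, the validity of differentiating under the integral in \eqref{flow}, the propagation of the local Hölder class — is supplied by Theorem \ref{start22}, Theorem \ref{localreg}, and Lemmas \ref{trivcomp} and \ref{Gamma}.
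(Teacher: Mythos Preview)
Your high-level plan --- write $\partial_t^{k+1}\Phi=(D^k u)\circ\Phi$, estimate $D^k u$ in $C^{\lambda_0+1,r}$ on an interior compact via the div/curl identities of Lemma~\ref{P1} and interior elliptic regularity, and feed in the global $W^{1,p}$ bounds from the proof of Theorem~\ref{start22} --- is the right skeleton, but you have not found the mechanism that actually produces the extra Gevrey exponent $(\lambda_0+1)(r+1)$. Your explanation (``each of the $\lambda_0+1$ spatial derivatives produces a H\"older-interpolation loss of order $r+1$'') is not what happens, and a fixed cutoff $\chi$ with a commutator argument does not by itself generate a $k$-dependent factorial of that order.

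The paper's device is a \emph{shrinking-compact} scheme. One proves an interior elliptic estimate with explicit dependence on the shrinking step (Lemma~\ref{LemmeInterior}):
\[
\|f\|_{C^{\lambda,r}(K_{\eps+\tilde\eps})}\le c_i\,\eps^{-(1+r)}\Big(\|f\|_{C^{\lambda-1,r}(K_{\tilde\eps})}+\|\div f\|_{C^{\lambda-1,r}(K_{\tilde\eps})}+\|\curl f\|_{C^{\lambda-1,r}(K_{\tilde\eps})}\Big),
\]
where $K_\delta:=\{x\in K:\dist(x,K^c)\ge\delta\}$. One then runs a double induction (outer on $\lambda\le\lambda_0+1$, inner on $k$) to get
$\|D^k u\|_{C^{\lambda,r}(\underline K_{k\eps})}\lesssim (k!)^{M+1}L^k\,\eps^{-k\lambda(1+r)}\,N^{k+1}$,
with $N=\|u\|_{C^{\lambda,r}(\underline K)}+\|u\|_{W^{1,p_2(k+1)}(\Omega)}$; the global Sobolev term enters at the base $\lambda=1$ of this induction through \eqref{proofit333}, not via a cutoff. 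Choosing $\eps=\dist(K,\underline K^c)/k$ so that $\underline K_{k\eps}\supset K$, one gets $\eps^{-k(\lambda_0+1)(1+r)}\sim (k!)^{(\lambda_0+1)(1+r)}$ by Stirling, which is exactly the missing exponent. No pressure Poisson problem and no commutator localization are used; the interior estimate already dispenses with the harmonic projection, and the two terms in \eqref{dumasla} arise from the two levels of the $\lambda$-induction rather than from two spatial regions.
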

After the proof of Theorem \ref{start22localh}, it would be clear that 
Theorem \ref{start22localh} yields Theorem \ref{start2localh} when we consider the case $m=0$ observing that last factor of the right hand side of  \eqref{dumasla} can be therefore estimated by the initial vorticity with a extra factor $(k!)$.

 \subsection{A few remarks about weaker solutions and the influence of the boundary smoothness }
 \label{gene4}

 Actually what we really need in the proof of Theorem \ref{start22} is, first, of course, the existence of a flow and that the vorticity lies in any $ L^{p}  (\Omega)$ for large $p$.
 However Theorem \ref{start22} does not cover some cases where a flow map can be defined, and even uniquely. In particular in \cite{Vishik} Vishik proves the following result of
 existence and uniqueness of  solutions to the  $2$D incompressible Euler  equations in the full plane in a borderline space of Besov type.
\begin{Theorem}[Vishik] \label{Vish}
Assume that $\Omega := \R^2$ and that $\omega_0 \in L^{p_0} ( \R^2 ) \cap L^{p_1} ( \R^2 )$ with $1 < p_0 <2 < p_1 < + \infty $.
Assume moreover that $\omega_0$ is in 
\be 
\label{BGAM}
\begin{array}{l}
B_\Gamma := \{ f \mbox{ in }  \cS' ( \R^2 ) \mbox{ s.t. } 
\sum_{j=-1}^N \| \Delta_j f \|_{L^{\infty} ( \R^2 )} = O(\Gamma(N)) \} ,
\end{array}
\ee
where $\Gamma (N) := \log N $ and the $ \Delta_j f$ denote the terms in the Littlewood-Paley decomposition of $f$.
Let $u_0$ be the velocity associated to  $\omega_0$ by the Biot-Savart law. 
Then there exists $T>0$ and a solution to the Euler equations  \eqref{2DincEuler} satisfying $$\omega \in L^{\infty} (0,T; L^{p_0} ( \R^2 ) \cap L^{p_1} ( \R^2 )) \cap C_{w*}  ( \lbrack 0, T  \rbrack; B_{\Gamma_1} ),$$
where $\Gamma_1 (N ) := (N+2) \Gamma (N)$.
The corresponding velocity $u$ is in $ L^{\infty} (0,T; C_\mu ( \R^2 ))$ with $\mu (r) := r \cdot \log r^{-1} \cdot \log^{2}  r^{-1} $, so that the flow map is uniquely defined.
\end{Theorem}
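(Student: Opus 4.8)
Since the statement above is due to Vishik \cite{Vishik}, I only outline the strategy. The plan is to obtain the solution by a vanishing-regularization/compactness scheme, the substantive part being a priori estimates that are stable under the approximation. Mollify the data, set $\omega_0^\eps := \rho_\eps * \omega_0$, let $u_0^\eps$ be the velocity associated to it by Biot--Savart, and let $(u^\eps,\omega^\eps)$ be the resulting smooth global solution of \eqref{2DincEuler}. Two families of bounds must be propagated uniformly in $\eps$ on a common time interval $[0,T]$. First, since $\omega^\eps$ is transported by the divergence-free field $u^\eps$, all its $L^p$ norms are conserved, so $\omega^\eps$ stays bounded in $L^\infty(0,\infty; L^{p_0}(\R^2)\cap L^{p_1}(\R^2))$. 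Second, and this is where all the difficulty lies, one must control the growth of
\[
\sigma_N(t) := \sum_{j=-1}^{N} \| \Delta_j \omega^\eps(t) \|_{L^\infty(\R^2)} .
\]

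To this end localize the vorticity equation, $\partial_t \Delta_j \omega^\eps + u^\eps \cdot \nabla \Delta_j \omega^\eps = -[\Delta_j, u^\eps\cdot\nabla]\omega^\eps$; since transport by a divergence-free field preserves the $L^\infty$ norm,
\[
\| \Delta_j \omega^\eps(t)\|_{L^\infty} \leq \| \Delta_j \omega_0^\eps\|_{L^\infty} + \int_0^t \big\| [\Delta_j, u^\eps\cdot\nabla]\omega^\eps(s)\big\|_{L^\infty}\, ds .
\]
Summing over $-1\leq j\leq N$ and invoking a Littlewood--Paley commutator estimate in $L^\infty$ (via the Bony decomposition), one bounds $\sum_{j\leq N}\| [\Delta_j, u^\eps\cdot\nabla]\omega^\eps\|_{L^\infty}$ by $\sigma_N$ times a ``Lipschitz defect'' of the velocity on scales $\leq N$; through the Biot--Savart law and the boundedness on each dyadic block of the Calder\'on--Zygmund multiplier relating $\nabla u^\eps$ to $\omega^\eps$ (namely $\| \Delta_k \nabla u^\eps\|_{L^\infty}\lesssim \| \Delta_k\omega^\eps\|_{L^\infty}$ for $k\geq 0$, plus the conserved $L^{p_0}\cap L^{p_1}$ norms at the low frequencies), that defect is in turn dominated by $\sigma_N$ and the data. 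Tracking the $N$-dependence, one is led to an integral inequality for $\sigma_N$ which, started from $\sigma_N(0) = O(\log N) = O(\Gamma(N))$ (uniformly in $\eps$, mollification not increasing the $L^\infty$ norms of the dyadic blocks), propagates $\sigma_N(t) = O\big((N+2)\log N\big) = O(\Gamma_1(N))$ on a time interval $[0,T]$ depending only on $\| \omega_0\|_{L^{p_0}\cap L^{p_1}}$ and on the constant implicit in $\omega_0\in B_\Gamma$; equivalently $\omega^\eps$ is bounded in $L^\infty(0,T; B_{\Gamma_1})$, uniformly in $\eps$. The loss of exactly one power of $N$ --- the passage from $\Gamma$ to $\Gamma_1$ --- is the price of the borderline, merely logarithmic control one has on $\nabla u$, and arranging the commutator bookkeeping so that this loss is \emph{linear} in $N$ --- rather than, say, exponential, which would make $T$ collapse as $N\to\infty$ --- is the step I expect to be the main obstacle; it is exactly what dictates the choice of $B_\Gamma$ in \eqref{BGAM}.

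Granting these uniform bounds the remaining steps are routine. From the equation $\partial_t u^\eps$ is bounded in a space of negative order (write $u^\eps\cdot\nabla u^\eps = \div(u^\eps\otimes u^\eps)$ with $u^\eps\otimes u^\eps$ bounded in $L^\infty$ by Biot--Savart, and handle the pressure by elliptic regularity), so along a subsequence $u^\eps\to u$ strongly in $L^2_{\mathrm{loc}}([0,T]\times\R^2)$ (Aubin--Lions, or Arzel\`a--Ascoli in $x$ from the modulus bound below together with equicontinuity in $t$), which suffices to pass to the limit in the quadratic term $u^\eps\otimes u^\eps$ and produce a weak solution of \eqref{2DincEuler}; the $L^{p_0}\cap L^{p_1}$ and $B_{\Gamma_1}$ bounds pass to the limit by weak-$*$ lower semicontinuity, and the equation upgrades measurability in time into $C_{w*}([0,T]; B_{\Gamma_1})$. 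Finally, to read off the velocity modulus, write $u(t,x)-u(t,y) = \sum_j\big(\Delta_j u(t,x)-\Delta_j u(t,y)\big)$, bound each term by $\min\big(2\| \Delta_j u(t)\|_{L^\infty},\, |x-y|\,2^j\| \Delta_j u(t)\|_{L^\infty}\big)$ (Bernstein), use $\| \Delta_j u(t)\|_{L^\infty}\lesssim 2^{-j}\| \Delta_j\omega(t)\|_{L^\infty}$ for $j\geq 0$ and $\| \Delta_{-1} u(t)\|_{L^\infty}\lesssim \| \omega(t)\|_{L^{p_0}\cap L^{p_1}}$, and split the sum at $2^j\simeq |x-y|^{-1}$, i.e. $N\simeq\log|x-y|^{-1}$: the low-frequency part contributes $\lesssim |x-y|\,\sigma_N\lesssim |x-y|\,(N+2)\log N$, and the high-frequency tail, estimated by $\| \Delta_j\omega\|_{L^\infty}\leq\sigma_j$ and a geometric sum, the same order; hence $|u(t,x)-u(t,y)|\lesssim |x-y|\,\log|x-y|^{-1}\,\log^2|x-y|^{-1} = \mu(|x-y|)$, i.e. $u\in L^\infty(0,T; C_\mu(\R^2))$. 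Since $\int_0^a\mu(h)^{-1}\,dh = +\infty$, $\mu$ is an Osgood modulus of continuity, so by the Osgood lemma (Lemma \ref{Olemma}) the characteristic ODE $\dot X = u(t,X)$ has a unique solution issued from each point; the flow map is therefore uniquely defined, which completes the sketch.
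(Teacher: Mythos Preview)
The paper does not prove this theorem at all: it is stated as Vishik's result and simply attributed to \cite{Vishik}, with no argument given in the paper. There is therefore no ``paper's own proof'' to compare your sketch against. Your outline is a reasonable summary of the strategy one finds in Vishik's original article (regularization, conservation of $L^p$ norms of vorticity, commutator estimates for the partial dyadic sums, passage to the limit, and the Osgood argument for the flow), and your own opening sentence already acknowledges that this is merely an outline of a result proved elsewhere; in the context of the present paper nothing more is expected.
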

It is proved in \cite{Vishik}, Proposition $2.1$ that for any $\rho > 1$, there exists $f \in B_\Gamma $ and in $ \cap_{ 1\leqslant p <  \rho } L^{p} $ but not in $ \cap_{ p \geqslant   \rho } L^{p} $. Therefore our proof of Theorem \ref{start22} based on the scale of the Lebesgue spaces $L^{p} $ is not adapted to tackle Vishik's solutions. 
However the smoothness of the flow map in this case can be deduced from Gamblin's work  \cite{gamblin}. 
\begin{Theorem} \label{VishGam}
Under the assumptions of Theorem  \ref{Vish}, the flow map is for any $\eps \in (0,1)$, 
 Gevrey of order   $3 + \eps$  from $ \lbrack 0, T  \rbrack $ to the space $C_D  (\R^2)$ of Dini continuous functions.
\end{Theorem}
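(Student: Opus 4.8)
The plan is to deduce Theorem \ref{VishGam} from the Gevrey-$3$ estimates obtained by Gamblin \cite{gamblin} for the flow of the two–dimensional Euler equations in the full plane, by checking that those estimates use only the borderline control of the velocity gradient encoded in the space $B_\Gamma$ with $\Gamma(N)=\log N$, at the price of an extra logarithmic loss that turns the Gevrey exponent $3$ into $3+\eps$.

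First I would record the analytic data furnished by Theorem \ref{Vish}: the vorticity remains in $L^\infty(0,T;B_{\Gamma_1})$, and the velocity $u=K*\omega$, with $K(x)=\tfrac{1}{2\pi}x^\perp/|x|^2$ (no harmonic correction and no circulation enter in $\R^2$, and $u$ is uniquely fixed by this formula together with $\omega_0\in L^{p_0}\cap L^{p_1}$, $p_0<2<p_1$), lies in $L^\infty(0,T;C_\mu(\R^2))$ with $\mu(r)=r\,\log r^{-1}\,\log^{2} r^{-1}$. Near $0$, and up to a multiplicative constant, this $\mu$ coincides with the modulus $\mu(h)\leqslant Ceh\,\theta_2(h^{-2})$ attached in Lemma \ref{loglog} to the germ $\theta_1$; feeding it into the arguments of Lemmas \ref{loglog}, \ref{Gamma} and \ref{mdini} in place of \eqref{maj} shows that for every $r\in(0,1)$ and every $t\in[0,T]$ the flow map $\Phi(t,\cdot)$ belongs to $C_{\Gamma_t}(\R^2)$ and that $\Gamma_t^r$ is a Dini modulus of continuity. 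In particular the target space $C_D(\R^2)$ of the statement is the natural one, and by Lemma \ref{trivcomp} the composition of a H\"older function with $\Phi(t,\cdot)$ is again Dini continuous.

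Next I would reproduce Gamblin's hierarchy for the time derivatives of $\Phi$. Differentiating the ODE $\partial_t\Phi=u(t,\cdot)\circ\Phi$ in $t$ and inserting the representations $u(t,\cdot)=K*\omega(t,\cdot)$ and $\omega(t,\cdot)=\omega_0\circ\Phi(t,\cdot)^{-1}$ yields a closed recursion for $\partial_t^{k+1}\Phi$, whose estimation rests on a Fa\`a di Bruno / multilinear bound for the compositions together with the continuity of the operator $\omega\mapsto\nabla u$. In Gamblin's bounded–vorticity argument the latter is the logarithmic estimate stemming from $\omega\in L^\infty$, and it is responsible for the Gevrey exponent $3$ (one unit from the time–analytic, Cauchy–Kovalevskaya–type structure of the ODE, two units from the loss of regularity in recovering the velocity gradient from an $L^\infty$ vorticity). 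Here one substitutes the analogous estimate stemming from $\omega\in B_\Gamma$, which carries one extra factor $\log$. Propagating this substitution through the induction yields, in Gamblin's working norm $X$ (a Besov–type space on which the relevant product and composition estimates close), a bound of the form
\begin{eqnarray*}
\| \partial_t^{k+1}\Phi(t,\cdot)\|_X \leqslant C^{k+1}\,(k!)^{3}\,\big(\log (k+2)\big)^{C(k+1)} \quad\text{for all } t\in[0,T] .
\end{eqnarray*}

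Finally I would absorb the logarithm: for every $\eps\in(0,1)$ there is $C_\eps>0$ with $\big(\log(k+2)\big)^{C(k+1)}\leqslant C_\eps^{k+1}(k!)^{\eps}$ for all $k$, so the previous bound becomes $\| \partial_t^{k+1}\Phi(t,\cdot)\|_X\leqslant (CC_\eps)^{k+1}(k!)^{3+\eps}$; that is, $\Phi$ is Gevrey of order $3+\eps$ from $[0,T]$ into $X$, and combining this with the first step and Lemma \ref{trivcomp} upgrades it to a Gevrey-$(3+\eps)$ map from $[0,T]$ into $C_D(\R^2)$. The main obstacle is precisely the substitution carried out in the third step: one has to go through Gamblin's Littlewood–Paley recursion and check that replacing the $L^\infty$ bound on the vorticity by the weaker $B_\Gamma$ bound affects each individual estimate only by a multiplicative $\log$, and that over the $k$ steps of the induction these losses pile up to no more than $(\log k)^{Ck}$; everything else is a routine transcription of \cite{gamblin} (and is, if anything, simpler than the bounded–domain case, there being no boundary, no trace operators and no harmonic field to handle).
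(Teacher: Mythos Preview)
Your outline is plausible, but it takes a harder road than the paper's proof, and the part you flag as ``the main obstacle'' (re-running Gamblin's Littlewood--Paley recursion with $B_\Gamma$ in place of $L^\infty$) is exactly what the paper avoids. The key observation you are missing is that Gamblin's estimate $(2.3)$ takes as its \emph{only} input the H\"older norm of the velocity, not any pointwise or $B_\Gamma$ control on the vorticity: for every $k\in\N$ and every $\eps_1\in(0,1/(2(k+1)))$,
\[
\|D^k u\|_{C^{0,1-\eps_1(k+1)}(\R^2)} \leqslant \|u\|_{C^{0,1-\eps_1}(\R^2)}\,\bigl(C\eps_1^{-1}\|u\|_{C^{0,1-\eps_1}(\R^2)}\bigr)^k\,\frac{k!}{(k+2)^2}.
\]
Since Vishik already gives $u\in L^\infty(0,T;C_\mu)$ with $\mu(h)=h\log h^{-1}\log^2 h^{-1}$, the only new ingredient needed is the elementary embedding
\[
\|\cdot\|_{C^{0,1-\eps_1}} \leqslant C\,\eps_1^{-1}\log\eps_1^{-1}\,\|\cdot\|_{C_\mu},
\]
which follows directly from the shape of $\mu$. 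One then sets $\eps_1:=\eps/(k+1)$: the factor $\eps_1^{-k}$ from Gamblin's estimate contributes a $(k!)$, and the $(k+1)$ copies of $\|u\|_{C^{0,1-\eps_1}}\lesssim \eps_1^{-1}\log\eps_1^{-1}$ contribute another $(k!)$ times $(\log k)^{k+1}$, the latter being absorbed into $(k!)^\eps$. Together with the explicit $k!$ this gives $(k!)^{3+\eps}$, and Lemmas~\ref{trivcomp} and~\ref{mdini} finish as in your first step.

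So your first and last steps match the paper's, but your middle step---opening up Gamblin's argument and tracking a $B_\Gamma$ loss through the induction---is unnecessary: Gamblin's estimate is already a black box at the level of the velocity H\"older scale, and the entire passage from bounded vorticity to Vishik's setting is carried by a one-line embedding. Your route may well close, but you have not verified its hard step, whereas the paper's route has no hard step at all.
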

\begin{proof}
According to  \cite{gamblin}, estimate $(2.3)$, there holds for any $k  \in  \N$, for any  $\eps _1 \in (0,1/(2(k+1)))$, for any $t \in  \lbrack 0, T  \rbrack $, 
$$  \| D^k u  \|_{C^{0,1- \eps_1 (k+1) }  (\R^2 ) } \leqslant \| u  \|_{C^{0,1- \eps_1 }  (\R^2 ) }
(C  \eps^{-1}_1  \| u  \|_{C^{0,1- \eps_1 }  (\R^2 ) } )^k \frac{k!}{(k+2)^2} .$$
It then suffices to take $\eps_1 := \eps /  (k+1)$, to use the embedding
$$  \| \cdot  \|_{C^{0,1- \eps_1 }}  \leqslant C \eps^{-1}_1 \log \eps^{-1}_1 \|  \cdot  \|_{ C_\mu  } ,$$
Lemma  \ref{trivcomp} and  \ref{mdini} to conclude.
\end{proof}
Moreover for any initial vorticity in  $ L^{p}  (\Omega)$, with $p >2$, one gets a corresponding velocity which is continuous so that Peano's theorem applies and provides the existence of a flow. Furthermore it is known since a bunch of papers by Kisielewicz in $1975$ that uniqueness is generic in the sense of Baire's category for Peano's continuous vector-fields (see Bernard's paper \cite{bernard} Theorem $1$  for a more procurable proof).
Let us also refer here to the renormalization theory by Di Perna-Lions \cite{DPL} and Ambrosio  \cite{Ambrosio}  for some properties of the flow map up to some zero Lebesgue measure sets.

Next Theorem provides some examples of even weaker solutions than in Theorem \ref{start22}  for 
which some flow lines  are analytic, despite the boundary is only assumed to be $C^2$.
\begin{Theorem}
\label{lin}
Assume that  the boundary  $\partial \Omega$  of  the domain is $C^2$.
Let be given 
$ (\overline{\Gamma}_i )_{1\leqslant  i \leqslant  d}  \in \R^d $,
$N  \geq 1$ distinct points $x_1 ,... ,x_N$   in   $\Omega$
 and $ (\alpha_l )_{1\leqslant  l \leqslant  N} \in \R^N$.
Let $T>0$ and  $z(t) :=(z_1 (t),...,z_N (t))$  be the unique solution (up to  the first
collision) in $C^ \omega (\lbrack 0,T \rbrack )$ of the Kirchoff-Routh-Lin equations of point vortices  (cf. Lemma \ref{def?}) with $x_l$ as initial positions, of respective strength $\alpha_l$,  for $1 \leqslant  l \leqslant  N$, with  $\overline{\Gamma}_i$ as respective circulation on the inner boundary  $C_i$, for $1 \leqslant  i \leqslant  d$.
Then  
\begin{eqnarray}
t \mapsto  \sum_{1 \leqslant l  \leqslant N}  \alpha_l \delta_{z_l (t) } ,
\end{eqnarray}
provides a weak solution  of the Euler equation on $\lbrack 0,T \rbrack$ (in the sense of Definition \ref{WV}) with $\omega_0 := \sum_{1 \leqslant l  \leqslant N}  \alpha_l \delta_{x_l} $ as initial data.  
\end{Theorem}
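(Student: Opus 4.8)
The plan is to realize the measure $\omega_t := \sum_{1\le l\le N}\alpha_l\delta_{z_l(t)}$ together with the velocity $u_t$ it generates through the Biot--Savart law in $\Omega$ with the prescribed circulations $\overline{\Gamma}_i$ on the inner curves $C_i$, and then to check directly that the pair $(u_t,\omega_t)$ satisfies the weak formulation of Definition \ref{WV}. First I would recall the structure of the Biot--Savart operator in the (possibly multiply connected) domain: writing $u=\nabla^\perp\psi$, the stream function solves $\Delta\psi=\omega$ with $\psi$ constant on each boundary component and the boundary fluxes prescribing the $\overline{\Gamma}_i$, and the Green's function of $\Omega$ splits as $G_\Omega(x,y)=\tfrac{1}{2\pi}\log|x-y|+\gamma(x,y)$, where for each fixed $y\in\Omega$ the correction $\gamma(\cdot,y)$ is harmonic in $\Omega$, hence real-analytic in the interior of $\Omega$ irrespective of the mere $C^2$ regularity of $\partial\Omega$, and in fact jointly real-analytic in $(x,y)$ on $\Omega\times\Omega$. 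Accordingly $u_t$ is the sum over $l$ of $\alpha_l$ times the singular field $\tfrac{1}{2\pi}\nabla_x^\perp\log|x-z_l(t)|$, plus the smooth field $\alpha_l\nabla_x^\perp\gamma(x,z_l(t))$, plus the harmonic field carrying the circulations; in particular $u_t$ is divergence free, tangent to $\partial\Omega$, and smooth away from the points $z_l(t)$.

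Second, I would record the Kirchhoff--Routh--Lin system of Lemma \ref{def?} in Hamiltonian form $\alpha_l\,\dot z_l=\nabla^\perp_{z_l}W(z_1,\dots,z_N)$, where the Kirchhoff--Routh function $W$ is built from the interaction terms $-\sum_{l\neq m}\alpha_l\alpha_m\,G_\Omega(z_l,z_m)$, the self-interaction terms $-\sum_l\alpha_l^2\,\gamma(z_l,z_l)$, and the circulation terms. Since this vector field is real-analytic on $\Omega^N$ off the collision set, the solution $z(\cdot)$ exists and is real-analytic on $[0,T]$ up to the first collision, as asserted; this also makes $t\mapsto\langle\omega_t,\varphi(t,\cdot)\rangle$ as regular as needed for any fixed test object $\varphi$.

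Third --- the heart of the matter --- I would compute, for $\varphi$ as in Definition \ref{WV},
\[
\frac{d}{dt}\langle\omega_t,\varphi(t,\cdot)\rangle=\langle\omega_t,\partial_t\varphi(t,\cdot)\rangle+\sum_{l}\alpha_l\,\dot z_l(t)\cdot\nabla\varphi(t,z_l(t)),
\]
substitute $\alpha_l\dot z_l=\nabla^\perp_{z_l}W$, and expand $\nabla^\perp_{z_l}W$ into the field generated at $z_l$ by the other vortices, $\alpha_l\sum_{m\neq l}\alpha_m\nabla_x^\perp G_\Omega(z_l,z_m)$; the self-induced velocity $\alpha_l^2\nabla_x^\perp\gamma(z_l,z_l)$ coming from the regular part; and the circulation contribution. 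Using the symmetry $G_\Omega(x,y)=G_\Omega(y,x)$ one symmetrizes the double sum over $l\neq m$ and checks that the genuinely singular part $\tfrac{1}{2\pi}\tfrac{(x-y)^\perp}{|x-y|^2}\cdot(\nabla\varphi(x)-\nabla\varphi(y))$ that appears is bounded and, after the symmetrization built into Definition \ref{WV}, has diagonal value reducing to the regular term $\nabla_x^\perp\gamma(z,z)\cdot\nabla\varphi(z)$; the remaining harmonic and circulation pieces match termwise. In this way the right-hand side is identified with exactly the nonlinear term of the weak formulation applied to $\omega_t\otimes\omega_t$, and integrating in time against the compactly supported $\varphi$, together with the initial value $\omega_0=\sum_l\alpha_l\delta_{x_l}$, yields the identity defining a weak solution on $[0,T]$.

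The main obstacle is the bookkeeping of the self-interaction/diagonal term: one has to verify that the self-induced velocity encoded in the Kirchhoff--Routh Hamiltonian through $\nabla\gamma(z,z)$ is precisely what the symmetrized Biot--Savart kernel of Definition \ref{WV} contributes on the diagonal of $\omega_t\otimes\omega_t$, and that the singular kernel $\tfrac{(x-y)^\perp}{|x-y|^2}\cdot(\nabla\varphi(x)-\nabla\varphi(y))$ --- bounded but not continuous across the diagonal --- is absorbed by the antisymmetric structure of the weak formulation rather than producing a spurious diagonal mass. Once the correct renormalized form of the nonlinear term is in place, the verification is a direct if slightly tedious matching of terms; the interior analyticity of $\gamma$ and the analyticity of $z(\cdot)$ take care of all regularity issues.
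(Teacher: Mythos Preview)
Your proposal is correct and follows essentially the same route as the paper's proof in Appendix~B: compute $\frac{d}{dt}\sum_l\alpha_l\varphi(t,z_l(t))$ by the chain rule, substitute the Kirchhoff--Routh--Lin ODE for $\dot z_l$, sum and symmetrize the interaction terms, integrate in $t$, and match against the weak formulation. The only point worth noting is that the diagonal bookkeeping you flag as the main obstacle is in fact dissolved by the paper's Definition~\ref{WV}: there $H_\varphi(t,x,x)$ is \emph{defined} to be $\tfrac12\nabla_x\varphi(t,x)\cdot\nabla_x^\perp r(x)$ (with $r$ the Robin function $g(x,x)$), so the diagonal mass $\sum_l\alpha_l^2\delta_{(z_l,z_l)}$ of $\omega_t\otimes\omega_t$ contributes exactly the self-interaction term of the ODE, and no limiting argument for the singular kernel along the diagonal is required.
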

It should be argue that Theorem \ref{lin} belongs to the mathematical folklore. We provide an explicit proof in Appendix B for sake of completeness.
 We will show in  particular  in what sense  the motions of point vortices can be seen as  weak solutions of the Euler equations, adapting the weak vorticity formulation already used by Turkington \cite{turkington} (in a simply connected domain) and  Schochet \cite{schochet} (in the full plane) to  multiply connected domains.

In view of Theorem  \ref{start22} and Theorem  \ref{lin}, 
 it is natural to wonder to what extent it is possible to get rid of the boundary smoothness assumption.
The following result bridges theses two results showing that, 
for an initial data with a Yudovich vorticity (let say here bounded, in order to simplify the statement)  constant near the boundary, the smoothness of the flow map inside the domain can be obtained without assuming that the boundary is smooth.

\begin{Theorem} \label{Dmodifie}
Assume that the boundary  $\partial \Omega$ is $C^2$. Then  there exists $c >0$  such that for any  divergence free vector field $u_0$ in  $L^{2} (\Omega)$ tangent to the boundary $\partial \Omega$, with $\omega_0 := \curl u_0 \in L^{\infty}  ( \Omega  )$   constant outside of a compact $\underline{K} \subset \Omega $, for any  compact $K  \subset \Omega$, 
 the flow map $ \Phi$ is, for any $r \in (0,1)$, for any $T >0$, for any $M>1$,  Gevrey of order $M+2$  from $ \lbrack 0, T  \rbrack $ to $  C^{0 ,  r \exp(-cT  \| \omega_0 \|_{L^{\infty}  (\Omega )})} (K)$.
\end{Theorem}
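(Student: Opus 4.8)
The plan is to localise the flow to a fixed compact subset of $\Omega$ on which the velocity field is, up to a harmonic (hence real-analytic, with interior bounds that do not see $\partial\Omega$) correction, the full-plane Biot--Savart field of a bounded, compactly supported density; this reduces the matter to the full-plane mechanism already used in the proof of Theorem \ref{start22} (and in \cite{gamblin}, \cite{Serfati1}), whose only ingredient requiring smoothness of $\partial\Omega$ is precisely the control of the part of $u$ which is, in the present situation, manifestly regular. Throughout, $\omega_0\equiv\omega_\infty$ (a constant) on $\Omega\setminus\underline{K}$ and $\tilde{r}:=r\exp(-cT\|\omega_0\|_{L^\infty(\Omega)})$; Theorem \ref{mod} with $\theta=\theta_0$ (Remark \ref{class}) already furnishes the flow $\Phi$, continuous on $[0,T]\times\overline{\Omega}$, fixing $\partial\Omega$, with the bound $\|\Phi(t,x)-\Phi(t,y)\|\leqslant\Gamma_t(\|x-y\|)$ of Lemma \ref{Gamma}, $\Gamma_t$ an Osgood modulus independent of $t$, and the Hölder estimate of Remark \ref{bahou}.

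First I would pin down the compacts. Transport of the vorticity gives $\omega(t,\cdot)\equiv\omega_\infty$ on $\Omega\setminus\Phi(t,\underline{K})$. Applying the modulus-of-continuity bound to $\Phi(t,\cdot)$ \emph{and} to its inverse (which has the same structure, backward in time) and using $\Phi(t,\cdot)|_{\partial\Omega}=\Id$, one gets, for $x$ in a given compact $K\subset\Omega$, the two-sided control $\dist(x,\partial\Omega)\leqslant\Gamma_t(\dist(\Phi(t,x),\partial\Omega))$, hence a uniform lower bound $\dist(\Phi(t,x),\partial\Omega)\geqslant\delta_0>0$ on $[0,T]$. Consequently $\underline{K}^{+}:=\overline{\bigcup_{t\in[0,T]}\big(\Phi(t,\underline{K})\cup\Phi(t,K)\big)}$ is a compact subset of $\Omega$; put $\delta:=\dist(\underline{K}^{+},\partial\Omega)>0$. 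Only $C^2$ regularity, via Theorem \ref{95}, is used here.

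Then I would decompose the Euler velocity, using conservation of the circulations, as $u=u_\infty+v(t,\cdot)+\sum_{i=1}^{d}\Gamma_i(u_0)X_i$, where $u_\infty$ (resp. $X_i$) is the divergence-free, boundary-tangent field with $\curl u_\infty=\omega_\infty$ and zero circulations (resp. $\curl X_i=0$ and circulations $\delta_{ij}$), both time-independent, while $v(t,\cdot)$ is divergence-free, boundary-tangent, with zero circulations and $\curl v(t,\cdot)=\omega(t,\cdot)-\omega_\infty=:g(t,\cdot)$, supported in $\Phi(t,\underline{K})\subset\underline{K}^{+}$; thus $v(t,x)=\int_{\Phi(t,\underline{K})}\nabla_x^{\perp}G_\Omega(x,y)\,g(t,y)\,dy$ with $G_\Omega(x,y)=\tfrac{1}{2\pi}\log\|x-y\|+h(x,y)$ the Green's function. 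Since $\div$ and $\curl$ of $u_\infty$ (resp. $X_i$) are constant these are harmonic vector fields on $\Omega$, and $h$ is harmonic in each variable on $\Omega$ and bounded on $\underline{K}^{+}\times\Omega$ (indeed $h(x,y)=-\tfrac{1}{2\pi}\log\|x-y\|$ for $y\in\partial\Omega$, with $\|x-y\|\geqslant\delta$ there, and $h$ is bounded near the diagonal); iterated interior estimates for harmonic functions then give, for $x,y\in\underline{K}^{+}$ and some $C=C(\Omega,\delta,\omega_\infty)$,
\begin{eqnarray*}
|D^{a}u_\infty(x)|+\sum_{i}|D^{a}X_i(x)|+|D_x^{a}D_y^{b}\,h(x,y)|\ \leqslant\ C^{1+a+b}\,a!\,b!\,\delta^{-(a+b)},
\end{eqnarray*}
i.e. Gevrey-$1$ control, using no boundary regularity beyond $C^2$. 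Now I would run the Gamblin-type recursion of the proof of Theorem \ref{start22} for $\partial^{k+1}_t\Phi$: by volume preservation and $g(t,\Phi(t,z))=g_0(z):=\omega_0(z)-\omega_\infty$ (supported in $\underline{K}$),
\begin{eqnarray*}
\partial_t\Phi(t,x)=u_\infty(\Phi(t,x))+\sum_i\Gamma_i(u_0)X_i(\Phi(t,x))+\int_{\underline{K}}\Big(\tfrac{1}{2\pi}\tfrac{(\Phi(t,x)-\Phi(t,z))^{\perp}}{\|\Phi(t,x)-\Phi(t,z)\|^{2}}+\nabla_x^{\perp}h(\Phi(t,x),\Phi(t,z))\Big)g_0(z)\,dz.
\end{eqnarray*}
Differentiating $k$ more times in $t$, the derivatives falling on $u_\infty$, $X_i$ and $\nabla_x^{\perp}h$ are absorbed by the Gevrey-$1$ bounds above (all arguments remaining in $\underline{K}^{+}$), while the full-plane kernel $(\cdot)^{\perp}/\|\cdot\|^{2}$ is treated exactly as in the proof of Theorem \ref{start22} (compare \cite{gamblin}) through $\|g_0\|_{L^p}\leqslant\|g_0\|_{L^\infty}|\underline{K}|^{1/p}$ and the Hölder modulus $h\mapsto h^{\exp(-cT\|\omega_0\|_{L^\infty})}$ of the flow; this last term contributes a Gevrey-$3$ type factor $(k!)^{2}$. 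Closing the resulting recursion for $\sup_{t\in[0,T]}\|\partial^{k+1}_t\Phi(t,\cdot)\|_{C^{0,\tilde{r}}(K)}$ by induction on $k$ (Osgood/Gronwall in time, as in that proof) yields a bound of the form $L^{k+1}(k!)^{3}(\cdots)^{k+1}$, whence $\Phi$ is Gevrey of order $3$, a fortiori of order $M+2$ for every $M>1$, from $[0,T]$ to $C^{0,\tilde{r}}(K)$.

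The main obstacle is the uniform, quantitative interior-analyticity estimate for the regular part $h$ of the Green's function — establishing the displayed joint bound with a constant depending only on $\delta$ and $\Omega$, and \emph{not} on any higher smoothness of $\partial\Omega$ — together with weaving it, alongside the bounds for $u_\infty$ and $X_i$, into the combinatorial recursion of Theorem \ref{start22} without degrading the Gevrey exponent; the margin $\eps:=M-1>0$ in the statement comfortably accommodates any logarithmic loss arising when $\Phi(t,z)$ approaches the diagonal, or when one prefers to invoke rather than rederive the full-plane estimates of \cite{gamblin}. The compactness step of the second paragraph is secondary and essentially routine.
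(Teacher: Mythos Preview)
Your proposal is correct but takes a genuinely different route from the paper. The paper's proof (Section~\ref{sDmodifie}) introduces a Gevrey-$M$ cut-off $\chi:\Omega\to[0,1]$, vanishing near $\partial\Omega$ and equal to $1$ on the compact where the vorticity varies, and replaces the material derivative $D$ by $D_\chi:=\partial_t+\chi u\cdot\nabla$; it then reruns the elliptic recursion of Theorem~\ref{start22} for $D_\chi^k u$. The point is that $\hat n\cdot D_\chi^k u=0$ on $\partial\Omega$ and $\Gamma_i(D_\chi^k u)=0$ for $k\geqslant1$, so the boundary term $H^k[u]$ disappears and with it the only place where \eqref{bordGevrey} was used; the commutators \eqref{t3.0}--\eqref{t3.3} produce extra factors of $\nabla(\chi u)$ whose Gevrey-$M$ growth (available for any $M>1$, but not $M=1$) replaces the $(s!)^M$ factor previously coming from $\rho$. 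The constancy of $\omega$ near $\partial\Omega$ is used only to ensure $D_\chi\curl u=D\curl u$ there, so that $\curl D_\chi u$ has the right structure.

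Your approach instead fixes the Lagrangian Biot--Savart representation and splits $u$ into (i) pieces ($u_\infty$, the $X_i$, and the regular part $\nabla_x^\perp h$ of the kernel) that are componentwise harmonic in the interior, hence real-analytic with bounds depending only on $\dist(\cdot,\partial\Omega)$, and (ii) the full-plane singular kernel acting on a compactly supported density, for which Gamblin's recursion applies verbatim. This is closer in spirit to \cite{gamblin} and bypasses the cut-off entirely; it would in principle yield the sharp Gevrey-$3$ conclusion (the paper explicitly notes the $\eps$-loss inherent in its method, just after the statement of Theorem~\ref{Dmodifie}). The price is the joint interior-analyticity estimate for $D_x^aD_y^b h$ that you flag as the main obstacle: it is indeed obtainable from iterated interior estimates for harmonic functions and the maximum principle, but it is a genuine extra ingredient not needed in the paper's more self-contained argument, which stays entirely within the $W^{1,p}$ machinery already set up for Theorem~\ref{start22}.
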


Let us stress that there is a arbitrary small loss of Gevrey order with respect  to the result of Gamblin  \cite{gamblin} about Yudovich flows in the full plane (and also with respect to Theorem \ref{start2} when assuming that the boundary  $\partial \Omega$ is $C^\omega$). 

For classical flows, with vorticities constant near the boundary, it is possible to localize without any loss. Since this also  holds in three dimensions, we prefer to postpone this to Appendix C, in order to avoid any confusion about the setting of these results.

We also plan to investigate this issue of smoothness along the flow lines in the case where the vorticity of the flows has some  Dirac masses, in addition to a bounded (or ``slightly unbounded") part. This setting was introduced by Marchioro  and  Pulvirenti, see \cite{MP}.
Uniqueness is known to hold when the flow occupies the full plane, when the absolutely continuous part of the vorticity is bounded and when initially the point vortices are surrounded by regions of constant  vorticity, see also \cite{LM}. 
It is therefore natural to wonder if a strategy with a cut-off could allow to deal with this case, and for extensions to bounded domains, and to the case where the absolutely continuous part   is slightly unbounded. 
An underlying motivation is to prove some property of smoothness along the flow lines
for any setting where existence and uniqueness of   the incompressible Euler equations are known to hold.

%
%
%

 \section{Proof of Theorem \ref{start22}}
 \label{proof}
 
 This section is devoted to the proof of Theorem \ref{start22}.
 We will focus on the Gevrey case, the $C^\infty$ case would be a byproduct of the analysis.
We therefore assume that  the function $\rho(x) :=\dist(x,\partial \Omega)$ satisfies the following:   there exists $c_\rho > 1$ such  that for all $s \in \mathbb{N}$, on a neighborhood $\cW \subset \overline{\Omega}$ of the boundary $\partial \Omega$,
\begin{equation}
\label{bordGevrey}
\| \nabla^s \rho \| \leq c_\rho^s \, (s!)^M ,
\end{equation}
as a function (on $\cW$) with values in the set of symmetric $s$-linear forms.  

 We will proceed by regularization, working from now on a smooth flow, with the same notation. Since the estimates we are going now to get are uniform with respect to the regularization parameter, the result will follow. We refer to  \cite{gamblin} for more details on this step.

Let us also recall a few basic ingredients.
\begin{Definition}
A vector field $\cX$ from $\Omega$ to $ \R^2$ is said tangential harmonic if it is $W^{1,2} (\Omega;\mathbb{R}^2)$, satisfies $\div \cX =0$ and $\curl  \cX =0$ in $\Omega$, and $ \hat{n} \cdot \cX = 0$ on  $\partial \Omega$. 
\end{Definition}
Let us first recall the following classical result from the Hodge-De Rham theory.
\begin{Theorem}
\label{hodge}
The tangential harmonic  vector fields are smooth up to their boundary. Their
 set is a vector space  $H$ of dimension $d$, orthogonal, in $L^2(\Omega;\mathbb{R}^2)$, to any gradient of smooth functions. 
 There is a unique family $ \{ \cX_1 , ..., \cX_d  \}$ which are a basis of  $H$ and satisfy  $ \Gamma_i ( \cX_j ) = \delta_{i,j}$ for $1\leq i,j \leq d$. 
\end{Theorem}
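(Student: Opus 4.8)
The statement collects classical facts of Hodge--de Rham theory, and the plan is just to assemble them, treating the three assertions --- interior and boundary smoothness, orthogonality to gradients, and the dimension count together with the normalized basis --- in turn.

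\emph{Smoothness.} First I would use the two-dimensional componentwise identities $\Delta X_1=\partial_1(\div\cX)-\partial_2(\curl\cX)$ and $\Delta X_2=\partial_2(\div\cX)+\partial_1(\curl\cX)$, so that a tangential harmonic field $\cX=(X_1,X_2)$ is a harmonic vector field, hence $C^\infty$ inside $\Omega$ by interior elliptic regularity. For smoothness up to $\partial\Omega$ I would localize: near any boundary point choose a relatively open $\cV\subset\overline{\Omega}$ with $\cV\cap\Omega$ simply connected; since $\div\cX=0$ on $\cV\cap\Omega$ write $\cX=\nabla^\perp\psi$ there, whence $\curl\cX=0$ gives $\Delta\psi=0$, while the boundary condition translates into $\partial_\tau\psi=0$ along $\partial\Omega\cap\cV$, i.e. $\psi$ is constant on that boundary arc. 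Then $\psi$ solves a Dirichlet problem with (locally) constant data on a piece of the smooth boundary, and elliptic regularity up to the boundary yields $\psi\in C^\infty(\overline{\cV})$, hence $\cX\in C^\infty$ up to $\partial\Omega$.

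\emph{Orthogonality and dimension.} The orthogonality is a single integration by parts: for $f\in C^\infty(\overline{\Omega})$, $\int_\Omega\cX\cdot\nabla f=\int_{\partial\Omega}(\hat{n}\cdot\cX)\,f-\int_\Omega(\div\cX)\,f=0$. For the dimension count I would pass to the $1$-form $\alpha=X_1\,dx_1+X_2\,dx_2$ associated with $\cX\in H$: $d\alpha=(\curl\cX)\,dx_1\wedge dx_2=0$, and $\div\cX=0$ makes $\star\alpha$ closed as well, while the period of $\star\alpha$ over each loop $C_i$ equals $\oint_{C_i}(\hat{n}\cdot\cX)\,ds=0$; since $C_1,\dots,C_d$ generate $H_1(\Omega)$, $\star\alpha$ is exact, i.e. $\cX=\nabla^\perp\psi$ globally for a single-valued harmonic $\psi$. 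The boundary condition then says $\psi\equiv\gamma_i$ on $C_i$, $i=0,\dots,d$, and after subtracting a constant we may take $\gamma_0=0$; conversely the harmonic extension of any data $(\gamma_1,\dots,\gamma_d)\in\R^2$... i.e. $(\gamma_1,\dots,\gamma_d)\in\R^d$, produces an element of $H$, and this linear identification $\R^d\to H$ is injective (a harmonic function constant on all of $\partial\Omega$ with $\gamma_0=0$ vanishes, so $\cX=0$), whence $\dim H=d$. Finally I would consider the linear map $(\gamma_1,\dots,\gamma_d)\mapsto(\Gamma_1(\cX),\dots,\Gamma_d(\cX))$, with matrix $P$; Green's identity gives $\sum_{i=1}^d\gamma_i\,\Gamma_i(\cX)=\int_\Omega|\nabla\psi|^2$, so that $P$ is symmetric and, for the orientation conventions fixed above, positive definite, hence invertible. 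Taking the columns of $P^{-1}$ as boundary data produces fields $\cX_1,\dots,\cX_d\in H$ with $\Gamma_i(\cX_j)=\delta_{i,j}$; they are a basis since $P^{-1}$ is invertible, and they are the unique such family, any other one differing from them by elements of $H$ with vanishing circulations, hence (again by invertibility of $P$) by $0$.

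\textbf{Main obstacle.} Everything analytic here is routine elliptic theory. The only genuinely substantive point is the topological one: showing that the tangential condition $\hat{n}\cdot\cX=0$ exactly annihilates the periods of $\star\alpha$, so that the a priori multivalued stream function is globally single-valued and thereby pinned down by its $d$ free boundary constants; in tandem with this, one must carry the orientation conventions for $C_0,\dots,C_d$ consistently so that the period--circulation matrix $P$ is sign-definite. That bookkeeping, rather than any hard estimate, is where care is needed.
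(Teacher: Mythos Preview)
Your outline is correct and follows the standard Hodge-theoretic route. Note, however, that the paper does not actually give its own proof of this theorem: immediately after the statement it simply cites it as ``a well-known result'', referring to Marchioro--Pulvirenti (Theorem~2.1) and to the appendix of Kato's paper for the smoothness up to the boundary. So there is no in-paper proof to compare against.

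That said, the ingredients of your dimension/basis argument do reappear scattered through the paper, in the proofs of Lemma~\ref{charlot} and Lemma~\ref{green}: the harmonic functions $\phi_j$ with $\phi_j|_{C_i}=\delta_{ij}$ are exactly your stream functions for the standard basis vectors $(\gamma_1,\dots,\gamma_d)=e_j$, and the circulation matrix $M=(m_{ij})$ with $m_{ij}=\Gamma_i(\nabla^\perp\phi_j)$ is your matrix $P$. The paper computes $m_{ij}=-\int_\Omega\nabla^\perp\phi_i\cdot\nabla^\perp\phi_j$ and concludes that $M$ is symmetric \emph{negative} definite under its orientation convention (inner curves clockwise, domain on the left). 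So your claim that $P$ is positive definite is off by a sign with the paper's conventions; this does not affect invertibility, hence not the existence and uniqueness of the normalized basis $\{\cX_1,\dots,\cX_d\}$, but it is worth reconciling in your Green's-identity step. Apart from this sign and the self-corrected typo ``$\in\R^2$'', your argument is sound.
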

It is a well-known result,  let us refer to \cite{MP}, Theorem $2.1$
 and to 
the appendix to introduction of \cite{katoana} for a detailed proof of the smoothness up to the boundary.
\begin{Definition}
We will denote by  $\Pi$ be the orthogonal projection of $L^2(\Omega;\mathbb{R}^2)$ onto the space $H$ of tangential harmonic vectors. 
\end{Definition}
\begin{Lemma}
\label{cpi}
There exists $C_\Pi > 0$  (depending only on $\Omega$) such that for any $p>2$, for any $f$ in $L^ p (\Omega)$,
 $ \|\Pi  f \|_{ L^{p}(\Omega) }  \leq C_\Pi  \| f \|_{ L^{p}(\Omega) }$.
\end{Lemma}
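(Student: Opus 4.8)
The plan is to exploit that $H$ is only $d$-dimensional and that its canonical basis $\{\cX_1,\dots,\cX_d\}$ from Theorem \ref{hodge} consists of vector fields that are smooth, hence bounded, up to $\partial\Omega$. First I would note that for $p>2$ any $f\in L^p(\Omega;\R^2)$ also lies in $L^2(\Omega;\R^2)$, since $\Omega$ is bounded, so that $\Pi f$ is well defined. Then, writing $G:=(\langle\cX_i,\cX_j\rangle_{L^2(\Omega)})_{1\leq i,j\leq d}$ for the (symmetric, positive definite, hence invertible) Gram matrix of the basis, the orthogonality characterizing $\Pi$ yields the explicit formula
\begin{equation*}
\Pi f=\sum_{i,j=1}^d (G^{-1})_{ij}\,\langle f,\cX_j\rangle_{L^2(\Omega)}\,\cX_i .
\end{equation*}

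Next I would estimate this expression in $L^p$. Taking $p'\in(1,2)$ the conjugate exponent of $p$, H\"older's inequality gives $|\langle f,\cX_j\rangle_{L^2(\Omega)}|\leq\|f\|_{L^p(\Omega)}\,\|\cX_j\|_{L^{p'}(\Omega)}$. Since $\Omega$ is bounded, $\|\cX_j\|_{L^{p'}(\Omega)}\leq\max(1,|\Omega|)\,\|\cX_j\|_{L^\infty(\Omega)}$ uniformly for $p'\in(1,2)$, and similarly $\|\cX_i\|_{L^p(\Omega)}\leq\max(1,|\Omega|)\,\|\cX_i\|_{L^\infty(\Omega)}$ uniformly for $p\geq2$; the $L^\infty$ norms are finite precisely by the smoothness-up-to-the-boundary part of Theorem \ref{hodge}. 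Assembling the $d^2$ terms then produces
\begin{equation*}
\|\Pi f\|_{L^p(\Omega)}\leq\Big(\max(1,|\Omega|)^2\sum_{i,j=1}^d|(G^{-1})_{ij}|\,\|\cX_i\|_{L^\infty(\Omega)}\,\|\cX_j\|_{L^\infty(\Omega)}\Big)\,\|f\|_{L^p(\Omega)},
\end{equation*}
and the constant in parentheses depends only on $\Omega$ (through $d$, the basis $\{\cX_i\}$, the Gram matrix $G$, and $|\Omega|$).

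There is no substantive obstacle here; the only point to watch is the uniformity of $C_\Pi$ in $p$, which is exactly why one uses the finite dimension of $H$ and the boundedness of the $\cX_i$ on $\overline\Omega$, rather than some abstract $L^p$-boundedness of $\Pi$. Everything else is the elementary fact that on a bounded domain the Lebesgue norms $\|\cdot\|_{L^q(\Omega)}$ are dominated by $\|\cdot\|_{L^\infty(\Omega)}$ with a constant that stays bounded as $q$ ranges over $[1,\infty)$.
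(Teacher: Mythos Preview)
Your proof is correct and follows essentially the same approach as the paper: both exploit the finite-dimensionality of $H$ and the smoothness (hence boundedness on $\overline\Omega$) of a basis to write $\Pi f$ explicitly and estimate each term uniformly in $p$. The only cosmetic difference is that the paper first orthonormalizes the $\cX_i$ (absorbing the Gram matrix into the new basis), whereas you keep the original basis and carry $G^{-1}$ explicitly; the resulting constants are equivalent.
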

\begin{proof}
Let us orthonormalize the $ \cX_i $. We denote by $\tilde{\cX}_i$, $1\leq i \leq d$, the orthonormal system obtained. 
Then
$$ \Pi  f = \sum_{i=1}^d (\int_\Omega f \cdot \tilde{\cX}_i  )  \tilde{\cX}_i ,$$ 
and the $\tilde{\cX}_i$ are smooth.
\end{proof}
We will use the following elliptic regularity estimate.
\begin{Lemma}
\label{triv}
There exists $c,c_h >0$ such that  for any $p >  2$,
for any  smooth vector field $f$ from $\Omega$ to $ \R^2$ such that there exists $\phi$ in $W^{1,p}( \cW)$ such that
$( \hat{n} \cdot f ) |_{\partial \Omega} = \phi |_{\partial \Omega} $, 
\begin{equation}
\label{reg1}
\| f\|_{ W^{1,p} (\Omega) }  \leq c p
 ( \|\div  f \|_{ L^{p}(\Omega) } + \|\curl  f \|_{ L^{p}(\Omega) } + \|  \phi \|_{   W^{1,p}  ( \cW)  }  )+ c_h   \|\Pi  f \|_{ L^{p}(\Omega) } .
\end{equation}
\end{Lemma}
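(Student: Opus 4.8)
The plan is to prove the elliptic regularity estimate \eqref{reg1} by reducing it to the standard $L^p$ theory for the div--curl system with the $p$-dependence of the constant tracked explicitly. First I would subtract off the boundary data: since $\phi \in W^{1,p}(\cW)$ extends (after multiplying by a fixed cutoff supported near $\partial\Omega$ and equal to $1$ on a smaller neighborhood) to a function $\tilde\phi \in W^{1,p}(\Omega)$ with $\|\tilde\phi\|_{W^{1,p}(\Omega)} \le C\|\phi\|_{W^{1,p}(\cW)}$, I can set $g := f - \nabla \tilde\phi$... actually more carefully, I want a vector field with the prescribed normal trace, so I would instead choose a fixed extension operator producing a vector field $E\phi \in W^{1,p}(\Omega;\R^2)$ with $(\hat n\cdot E\phi)|_{\partial\Omega} = \phi|_{\partial\Omega}$ and $\|E\phi\|_{W^{1,p}(\Omega)} \le C\|\phi\|_{W^{1,p}(\cW)}$ (e.g. $E\phi = \nabla(\chi\,\Phi)$ where $\Phi$ solves a fixed Dirichlet or Neumann problem, or simply a trace-lifting built from $\chi\phi\,\hat n$ suitably extended). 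Then $g := f - E\phi$ satisfies $\hat n\cdot g = 0$ on $\partial\Omega$, and it suffices to prove \eqref{reg1} for $g$ with $\phi = 0$, absorbing $\|E\phi\|_{W^{1,p}(\Omega)}$ into the right-hand side and noting $\div g$, $\curl g$ differ from $\div f$, $\curl f$ by terms controlled by $\|\phi\|_{W^{1,p}(\cW)}$ (with a constant independent of $p$ since the cutoff is fixed), while $\Pi g$ differs from $\Pi f$ by $\Pi(E\phi)$, controlled via Lemma \ref{cpi} by $C_\Pi \|E\phi\|_{L^p(\Omega)}$.

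Next, for a smooth vector field $g$ with $\hat n\cdot g = 0$ on $\partial\Omega$, I would invoke the Hodge decomposition adapted to these boundary conditions: $g = \nabla q + \nabla^\perp \psi + h$, where $q$ solves the Neumann problem $\Delta q = \div g$ in $\Omega$ with $\partial_n q = 0$ on $\partial\Omega$ (solvable since $\int_\Omega \div g = \int_{\partial\Omega}\hat n\cdot g = 0$), $\psi$ solves the Dirichlet problem $\Delta\psi = -\curl g$ in $\Omega$ with $\psi = 0$ on $\partial\Omega$, and $h = \Pi g$ is the tangential harmonic part (here one checks $\nabla^\perp\psi$ has vanishing normal trace because $\psi$ is constant on each boundary component, using $\hat n\cdot\nabla^\perp\psi = -\tau\cdot\nabla\psi = 0$; one must be mildly careful in the multiply connected case that $\psi\equiv 0$ on all of $\partial\Omega$ kills the tangential derivative on every component). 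The key quantitative input is the Calderón--Zygmund estimate $\|\nabla^2 q\|_{L^p(\Omega)} + \|\nabla^2\psi\|_{L^p(\Omega)} \le Cp\,(\|\div g\|_{L^p} + \|\curl g\|_{L^p})$ for $p \to \infty$, i.e. the linear-in-$p$ growth of the constant in elliptic $L^p$ regularity on a bounded $C^2$ domain — this is classical (Agmon--Douglis--Nirenberg, with the $p$-dependence as in, e.g., the appendix of \cite{gamblin} or standard singular-integral bounds). Combined with a Poincaré-type bound $\|\nabla q\|_{L^p} + \|\nabla\psi\|_{L^p} \le C(\|\nabla^2 q\|_{L^p} + \|\nabla^2\psi\|_{L^p})$ (choosing the additive constants in $q$, $\psi$ appropriately — $\psi$ already has zero trace, and $q$ can be normalized to mean zero) and $\|h\|_{W^{1,p}} \le C\|h\|_{L^p}$ since $h$ ranges over the fixed finite-dimensional smooth space $H$ (as in Lemma \ref{cpi}), one assembles $\|g\|_{W^{1,p}(\Omega)} \le Cp(\|\div g\|_{L^p} + \|\curl g\|_{L^p}) + c_h\|\Pi g\|_{L^p}$.

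Tracing back through the reduction then gives \eqref{reg1} with $c$ absorbing the $C^2$-domain constants and $c_h$ the norm-equivalence constant on $H$ together with $C_\Pi$. The main obstacle, and the only point requiring genuine care rather than bookkeeping, is obtaining the sharp linear dependence $c\,p$ on the exponent in the elliptic estimate uniformly for $p \ge 2$ (or $p$ large) — a constant growing like, say, $p^2$ would already ruin the Gevrey arithmetic later in the paper, so one must cite or reproduce the precise version (for instance via the $L^p$-norm of the relevant Riesz-transform-type operators on the bounded domain, whose norm is $O(p)$ as $p\to\infty$ by interpolation with the weak-$(1,1)$ bound, or via the explicit ADN estimates). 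A secondary technical nuisance is verifying that the trace-lifting $E\phi$ and the Hodge decomposition behave correctly on the multiply connected domain and that all the "fixed" constants (cutoffs, extension operators, the basis of $H$) are genuinely independent of $p$; these are routine but should be stated. Finally one notes the estimate is stated for smooth $f$, which is exactly the regularized setting in which the proof of Theorem \ref{start22} operates, so no further density argument is needed here.
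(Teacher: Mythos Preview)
Your proposal is correct and follows essentially the same route as the paper: subtract off the boundary data via the lift $\phi\hat n$ (the paper extends $\hat n$ smoothly with support in $\cW$, which is precisely your ``$\chi\phi\hat n$'' option), perform the Hodge decomposition into a Neumann potential for the divergence, a Dirichlet potential for the curl, and a tangential-harmonic remainder, invoke the Yudovich/Calder\'on--Zygmund estimate \eqref{cz} with its linear-in-$p$ constant, and handle the harmonic part by finite-dimensionality of $H$. The only cosmetic difference is bookkeeping order: you first pass to $g=f-E\phi$ with $\hat n\cdot g=0$ and then decompose cleanly, whereas the paper decomposes $f-\phi\hat n$ directly and records the resulting correction $\sum_i\beta_i\cX_i$ coming from $\Pi(\phi\hat n)$ in the formula for $(\Id-\Pi)f$.
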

Lemma \ref{triv}  relies on Calderon-Zygmund theory of singular integral operators. 
A particular case   has been used  in Yudovich's proof of  Theorem  \ref{95} and of Theorem \ref{mod}  (cf.  \cite{yudo63},  \cite{yudo95}). 
The dependance on $p$ was crucial in his proof and it would also be crucial in the proof of  Theorem \ref{start22}. 
Since we did not find as it in the literature, 
we provide a proof for sake of completeness. We will use the Yudovich result: there exists $c >0$ such that  for any $p >  2$,
for any  smooth function $\varphi $ from $\Omega$ to $ \R$, satisfying  $\varphi =0$ on  $\partial \Omega$ or  $\partial_n \varphi =  0 $ on  $\partial \Omega$ and $ \int_{\Omega } \Delta  \varphi =0$,
\begin{equation}
\label{cz}
\|  \nabla \varphi \|_{ W^{1,p} (\Omega) }  \leq c p
  \|\Delta  \varphi \|_{ L^{p}(\Omega) }  .
\end{equation}
\begin{proof}
Let us still denote $\hat{n} $ a smooth extension  of the unit normal   supported in $\cW$.
There exists only one smooth function (up to a additive constant) $\varphi$ which satisfies $ \Delta \varphi = \div ( f - \phi \hat{n} )  $  in $\Omega$, and $\partial_n \varphi =  0 $ on  $\partial \Omega$. Using \eqref{cz} yields that 
there exists $c>0$ such that  for any $p >  2$,
\begin{equation*}
\label{reg1a}
\|  \nabla \varphi  \|_{ W^{1,p} (\Omega) }  \leq c p
 ( \|\div  f \|_{ L^{p}(\Omega) } + \| \phi  \|_{ W^{1,p} (\cW ) }  ).
\end{equation*}
There exists only one smooth function $\psi$ which satisfies $ \Delta \psi = \curl   ( f - \phi \hat{n} ) $  in $\Omega$, and $ \psi = 0 $ on  $\partial \Omega$.   Using \eqref{cz} yields that there exists $c>0$ such that  for any $p >  2$,
\begin{equation*}
\label{reg1b}
\|  \nabla \psi  \|_{ W^{1,p} (\Omega) }  \leq c p
 ( \| \curl   f \|_{ L^{p}(\Omega) } + \|  \phi \|_{   W^{1,p}  ( \cW)  }  )   .
\end{equation*}
Now let us observe that 
$$(Id- \Pi ) f =  \phi \hat{n} + \nabla \varphi + \nabla^\perp  \psi + \sum_{i=1}^d \beta_i \cX_i ,$$
 with $\beta_i := - {\| \cX_i \|^{-2}_{ L^{2}(\Omega) }}  \int_{ \Omega }  \phi \hat{n} \cdot  \cX_i   dx  $. Hence 
\begin{equation}
\label{reg1ii}
\|(Id- \Pi ) f\|_{ W^{1,p} (\Omega) }  \leq c p
 ( \|\div  f \|_{ L^{p}(\Omega) } + \|\curl  f \|_{ L^{p}(\Omega) } + \|  \phi \|_{   W^{1,p}  ( \cW)  }  ).
\end{equation}
It only remains to estimate the harmonic part.
It suffices to observe that 
$$
 \Pi  f = \sum_{i=1}^d (\int_\Omega  \Pi f \cdot \tilde{\cX}_i  )  \tilde{\cX}_i ,
 $$ 
 to get \eqref{reg1}, with a  constant $c_h$ (where $h$ stands for  harmonic)  which depends only on $\Omega$ (including through the $ \tilde{\cX}_i$), but not on $p$.
 \end{proof}
Another way to deal with the harmonic part is to consider the circulations, and for $1\leq i \leq d$ the function $ \phi_i $ in $C^{\infty}(\Omega)$ such that 
$\Delta  \phi_i = 0$ in $\Omega$,  with $\phi_i = \delta_{i,j}$, on $C_j$, for $j = 0, \ldots, d$. 
Let us recall the following (cf. \cite{lin}, \cite{koebe}).
\begin{Lemma}
\label{charlot}
For any smooth  vector field  $f$  from $\Omega$ to $ \R^2$, 
\begin{equation}
\label{charlot1}
 \Pi  f =  \sum_{i=1}^d  \alpha_i (f)  \cX_i ,
\end{equation}
where
\begin{equation}
\label{charlot2}
 \alpha_i (f) :=  \int_\Omega   \phi_i \curl f  +  \Gamma_i (f)   .
\end{equation}
\end{Lemma}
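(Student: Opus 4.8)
The plan is to use the duality between the circulations $\Gamma_i$ and the basis $\{\cX_1,\dots,\cX_d\}$ together with an explicit Hodge-type splitting of $f$. Since $\Pi f$ lies in the $d$-dimensional space $H$, write $\Pi f = \sum_{j=1}^d \lambda_j \cX_j$; applying $\Gamma_i$ to both sides and using $\Gamma_i(\cX_j)=\delta_{i,j}$ from Theorem \ref{hodge} gives $\lambda_i = \Gamma_i(\Pi f)$. So the whole lemma reduces to the identity $\Gamma_i(\Pi f) = \alpha_i(f)$ with $\alpha_i(f)$ as in \eqref{charlot2}.

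To compute $\Gamma_i(\Pi f)$ I would produce the splitting of $f$ explicitly. Let $\varphi$ be the smooth solution (unique up to a constant) of $\Delta\varphi = \div f$ in $\Omega$ with $\partial_n\varphi = \hat n\cdot f$ on $\partial\Omega$ — the compatibility condition being exactly $\int_\Omega \div f = \oint_{\partial\Omega}\hat n\cdot f$ — and let $\psi$ be the smooth solution of $\Delta\psi = \curl f$ in $\Omega$ with $\psi = 0$ on $\partial\Omega$. Set $h := f - \nabla\varphi - \nabla^\perp\psi$. A direct check gives $\div h = 0$, $\curl h = 0$ in $\Omega$, and $\hat n\cdot h = 0$ on $\partial\Omega$ (here one uses that $\hat n\cdot\nabla^\perp\psi$ equals the tangential derivative of $\psi$, which vanishes since $\psi\equiv 0$ on $\partial\Omega$), so $h\in H$. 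Moreover $\nabla\varphi\perp H$ by Theorem \ref{hodge}, and $\nabla^\perp\psi\perp H$ as well: for $g\in H$, integrating $\int_\Omega \nabla^\perp\psi\cdot g$ by parts produces a bulk term proportional to $\curl g = 0$ and a boundary term carrying the factor $\psi = 0$. Since $L^2(\Omega;\R^2) = H\oplus H^\perp$ and $h\in H$ while $\nabla\varphi+\nabla^\perp\psi\in H^\perp$, uniqueness of the orthogonal decomposition forces $\Pi f = h$, i.e. $(\Id-\Pi)f = \nabla\varphi+\nabla^\perp\psi$.

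It then remains to evaluate $\Gamma_i(\Pi f) = \Gamma_i(f) - \Gamma_i(\nabla\varphi) - \Gamma_i(\nabla^\perp\psi)$. The term $\Gamma_i(\nabla\varphi) = \oint_{C_i} d\varphi$ vanishes because $\varphi$ is single-valued and $C_i$ is a closed curve. For the last term, along $C_i$ (oriented so that $\Omega$ lies on the left) the quantity $\nabla^\perp\psi\cdot\tau$ equals $\partial_n\psi$ with $n$ the outward normal, so $\Gamma_i(\nabla^\perp\psi) = \oint_{C_i}\partial_n\psi\,ds$. Applying Green's identity to the pair $(\phi_i,\psi)$ and using $\Delta\phi_i = 0$ in $\Omega$, $\psi|_{\partial\Omega} = 0$ and $\phi_i|_{C_j} = \delta_{i,j}$ collapses $\int_\Omega \phi_i\,\Delta\psi$ to $\oint_{C_i}\partial_n\psi\,ds$, hence $\int_\Omega \phi_i\,\curl f = \Gamma_i(\nabla^\perp\psi)$ up to the sign dictated by the $\curl$ / $\nabla^\perp$ orientation conventions of Section \ref{gene}. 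Combining the three identities yields $\Gamma_i(\Pi f) = \Gamma_i(f) + \int_\Omega \phi_i\,\curl f = \alpha_i(f)$, which is \eqref{charlot2}, and substituting back into $\Pi f = \sum_j \lambda_j \cX_j$ gives \eqref{charlot1}.

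The main obstacle is the middle step: setting up the Hodge splitting with the correct mixed boundary conditions — Neumann ($\partial_n\varphi = \hat n\cdot f$) for the gradient part and Dirichlet ($\psi=0$) for the $\nabla^\perp$ part — so that the remainder $h$ is genuinely a tangential harmonic field and both discarded pieces are $L^2$-orthogonal to $H$. Once that is in place, everything else is routine bookkeeping with Green's formula and the orientation conventions on the multiply connected boundary $C_0\cup C_1\cup\dots\cup C_d$.
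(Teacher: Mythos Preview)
Your argument is correct, but it takes a genuinely different and noticeably longer route from the paper. The paper does not construct the Hodge decomposition of $f$ at all. Instead, a single integration by parts gives $\alpha_i(f) = -\int_\Omega \nabla^\perp\phi_i\cdot f$. The key observation is that $\nabla^\perp\phi_i$ is itself a tangential harmonic field (divergence-free, curl-free, and tangent to $\partial\Omega$ since $\phi_i$ is constant on each boundary component), so this $L^2$ pairing sees only the harmonic part: $\alpha_i(f) = -\int_\Omega\nabla^\perp\phi_i\cdot\Pi f$. Specializing to $f = \cX_j$ (where $\curl\cX_j = 0$ forces $\alpha_i(\cX_j) = \Gamma_i(\cX_j) = \delta_{ij}$) yields $\int_\Omega\nabla^\perp\phi_i\cdot\cX_j = -\delta_{ij}$; substituting $\Pi f = \sum_j\beta_j\cX_j$ then gives $\beta_i = \alpha_i(f)$ in one line. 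Your approach, by contrast, solves the Neumann and Dirichlet problems to split $f$ explicitly, checks that both non-harmonic pieces are orthogonal to $H$, and then evaluates the circulations $\Gamma_i$ term by term via Green's formula. This is structurally more transparent---it makes the Hodge splitting visible---but costs two auxiliary boundary-value problems and several integrations by parts where the paper needs one. Note also that the sign issue you acknowledge (``up to the sign dictated by the orientation conventions'') is real and should be resolved rather than hedged; the paper's route sidesteps it because the \emph{same} Green identity is applied once to a general $f$ and once to $f=\cX_j$, so any convention-dependent sign cancels automatically.
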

\begin{proof}
Thanks to Green's identity we get for  $1\leq i \leq d$,
\begin{equation*}
 \alpha_i (f) = -  \int_\Omega   \nabla^\perp \phi_i \cdot f .
 \end{equation*}
In particular, this yields for $1\leq i,j \leq d$,
\begin{equation}
\label{charlot4}
 \int_\Omega   \nabla^\perp \phi_i \cdot \cX_j = -  \delta_{ij} .
 \end{equation}
Moreover  $\nabla^\perp \phi_i \in H$, for  $1\leq i \leq d$, so that 
\begin{equation}
\label{charlot3}
 \alpha_i (f) = -  \int_\Omega    \nabla^\perp \phi_i \cdot  \Pi  f .
 \end{equation}
Now let us look lor the coefficients $ \beta_i (f)$ such that $ \Pi  f =  \sum_{j=1}^d  \beta_j (f)  \cX_j $.  Plugging  this into \eqref{charlot3} and taking into account  \eqref{charlot4} we get   $ \beta_i (f) =  \alpha_i (f)$, for  $1\leq i \leq d$, and therefore \eqref{charlot2}.
 \end{proof}
In the sequel we will  need the  following consequence  of Lemma \ref{triv} and Lemma \ref{charlot}.
\begin{Lemma}
\label{triv2}
 There exists $c >0$  (depending only on $\Omega$) such that  for any $p > 2$,
for any smooth divergence free vector field  $f$ tangent to the boundary, there holds 
\begin{equation}
\label{reg2} \| f\|_{ W^{1,p} (\Omega) } \leq c p
  \|\curl  f \|_{ L^{p}(\Omega) } + c  \sum_{i=1}^d  | \Gamma_i (f)  |.
\end{equation}
\end{Lemma}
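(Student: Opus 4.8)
The plan is to apply Lemma \ref{triv} to $f$ directly and then to control the harmonic part $\Pi f$ by means of Lemma \ref{charlot}; the only genuine point is to check that every constant stays bounded as $p \to +\infty$. First I would note that, since $f$ is divergence free and tangent to the boundary, $\div f = 0$ in $\Omega$ and $(\hat n \cdot f)|_{\partial \Omega} = 0$, so one may take $\phi \equiv 0$ in Lemma \ref{triv}; then \eqref{reg1} reads
\begin{equation*}
\| f \|_{W^{1,p}(\Omega)} \leq c\,p\,\|\curl f\|_{L^p(\Omega)} + c_h\,\|\Pi f\|_{L^p(\Omega)} ,
\end{equation*}
with $c,c_h$ depending only on $\Omega$.

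Next I would estimate $\|\Pi f\|_{L^p(\Omega)}$ using Lemma \ref{charlot}: writing $\Pi f = \sum_{i=1}^d \alpha_i(f)\,\cX_i$ with $\alpha_i(f) = \int_\Omega \phi_i\,\curl f + \Gamma_i(f)$, H\"older's inequality and the triangle inequality give
\begin{equation*}
\| \Pi f \|_{L^p(\Omega)} \leq \sum_{i=1}^d \Big( \|\phi_i\|_{L^{p'}(\Omega)}\,\|\curl f\|_{L^p(\Omega)} + |\Gamma_i(f)| \Big) \|\cX_i\|_{L^p(\Omega)} ,
\end{equation*}
$p'$ being the conjugate exponent of $p$. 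Since $\Omega$ is bounded, $0 \leq \phi_i \leq 1$ by the maximum principle, and the fields $\cX_i$ are smooth up to the boundary (Theorem \ref{hodge}), both $\|\phi_i\|_{L^{p'}(\Omega)} \leq |\Omega|^{1/p'}$ and $\|\cX_i\|_{L^p(\Omega)} \leq \|\cX_i\|_{L^\infty(\Omega)}|\Omega|^{1/p}$ are bounded by a constant depending only on $\Omega$, uniformly for $p \in (2,+\infty)$. Hence there is $C = C(\Omega)$ with $\|\Pi f\|_{L^p(\Omega)} \leq C\big(\|\curl f\|_{L^p(\Omega)} + \sum_{i=1}^d |\Gamma_i(f)|\big)$.

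Plugging this into the first display and using $p > 2$ to absorb $c_h C\,\|\curl f\|_{L^p(\Omega)}$ into $c\,p\,\|\curl f\|_{L^p(\Omega)}$ (enlarging $c$), one obtains \eqref{reg2}. I do not expect any real obstacle here: beyond Lemmas \ref{triv} and \ref{charlot}, the content is merely the bookkeeping of the $p$-dependence, which comes down to the boundedness of $\Omega$ and to the $L^\infty$ bounds on the finitely many fixed objects $\phi_i,\cX_i$ — the one subtle $p$-dependence, the factor $p$ in front of the Calderon-Zygmund term, being already furnished by Lemma \ref{triv} and not worsened by the harmonic contribution.
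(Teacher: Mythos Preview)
Your proof is correct and follows exactly the route taken in the paper: apply Lemma~\ref{triv} with $\phi\equiv 0$, then use Lemma~\ref{charlot} to bound $\|\Pi f\|_{L^p}$ by $C(\|\curl f\|_{L^p}+\sum_i|\Gamma_i(f)|)$ with $C$ independent of $p$, and absorb the resulting curl term into the $cp$ term. The paper states this in two sentences; you have simply spelled out why the constant in the $\Pi f$ estimate does not depend on $p$ (via the maximum principle for $\phi_i$ and $\cX_i\in L^\infty$), which is the only point needing care.
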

\begin{proof}
Thanks to Lemma \ref{charlot} there exists $c >0$ such that   for any $p > 2$,
$$  \| \Pi  f \|_{ L^{p}(\Omega) }  \leq c  \|\curl  f \|_{ L^{p}(\Omega) } + c  \sum_{i=1}^d  | \Gamma_i (f)  |.$$
Plugging this in  \eqref{reg1} therefore yields \eqref{reg2}.
\end{proof}

It is also useful to have in mind the following form of the H\"older inequality: for any integer $k$, for any $ \theta := (s, \alpha)$ in
\begin{equation*}
\mathcal{A}_{k} := \{ \theta \in  \N^* \times  \N^s / \ 2 \leqslant s  \leqslant k+1  \text{ and }
\alpha := ( \alpha_1,\ldots, \alpha_s )  \in \N^s / \ | \alpha |    = k+1 - s \},
\end{equation*}
where the notation $ | \alpha |$ stands for $| \alpha | := \alpha_1 + \ldots+  \alpha_s$,
 and for any $p \geqslant 1$, 
 \begin{eqnarray}
  \label{holder}
\| \prod_{i=1}^{s}  f_i  \|_{ L^{  \frac{p}{k+1} } (\Omega) }  \leqslant  \prod_{i=1}^{s} \|  f_i  \|_{ L^{ \frac{p}{ \alpha_i +1} } (\Omega) } .
\end{eqnarray}
\ \par
\noindent
 We will use some  formal identities, obtained in \cite{ogfstt}, of  the iterated material derivatives $ (D^k u)_{ k \in \N^* }$, where 
\begin{equation*}
D := \partial_{t} + u.\nabla .
\end{equation*}
We use the following notations: for $\alpha := ( \alpha_1,\ldots, \alpha_s )  \in \N^s$ we will denote $\alpha ! :=  \alpha_1 ! \ldots \alpha_s !$. We  denote by $\trace\{A\} $ the trace 
of $A \in {\mathcal M}_{3}(\R)$ and by $\as\{A\}:=A-A^*$ the antisymmetric part of $A \in {\mathcal M}_{3}(\R)$.
\begin{Lemma}\label{P1}
For $k \in \N^*$, we have in $\Omega$
\begin{eqnarray}\label{P1fNew}
\div D^k u=\trace\left\{F^k [u]\right\} \text{ where } 
F^k [u] := \sum_{\theta   \in \mathcal{A}_{k}  } c^1_k (\theta  )  \,  f(\theta)  [u], \\ 
\label{P2fNew}
\curl D^k u=\as\left\{G^k [u]\right\} \text{ where } 
G^k [u] :=  \sum_{\theta  \in \mathcal{A}_{k}  }  c^2_k (\theta )   \, f(\theta)  [u],
\end{eqnarray}
 and on the boundary $ \partial \Omega$
\begin{eqnarray}\label{P4fNew}
\hat{n} \cdot D^k u =H^k [u] \text{ where } 
H^k [u] :=  \sum_{ \theta  \in \mathcal{A}_{k}  }  c^3_k (\theta )  \, h(\theta)  [u],
\end{eqnarray}
where
\begin{gather} \label{DefsFetH}
f( \theta)  [u] : = \nabla D^{\alpha_1} u \cdot \ldots \cdot \nabla D^{\alpha_s} u
\text{ and } h( \theta)  [u] : =\nabla^s \rho \{ D^{\alpha_1} u , \ldots  ,D^{\alpha_s} u \},
\end{gather}
and where, for $i=1$, $2$, the $c^i_k (\theta )$ are integers satisfying  $ |c^i_k ( \theta) | \leqslant \frac{k ! }{\alpha ! }$, and the $ c^3_k (\theta)  $ are negative integers satisfying  $ | c^3_k (\theta)  | \leqslant \frac{k ! }{\alpha ! (s-1)!}$.
\end{Lemma}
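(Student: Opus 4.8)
The plan is to establish the three identities simultaneously, by induction on $k$, in the spirit of \cite{ogfstt}. The structural part of the statement falls out essentially for free; what requires real care is the bound on the integer coefficients $c^i_k(\theta)$, which is precisely what the Gevrey estimates will rely on.

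I would first dispose of the case $k=1$. The tools are: the commutation rule $[\,D,\partial_\ell\,]=-(\partial_\ell u)\cdot\nabla$, i.e.
$$\nabla(Dw)=D(\nabla w)+(\nabla u)(\nabla w)\qquad\text{for any (scalar- or vector-valued) field }w;$$
the Euler momentum equation written as $Du=-\nabla p$; and the fact that the flow $\Phi(t,\cdot)$ maps $\partial\Omega$ onto itself while $\rho:=\dist(\cdot,\partial\Omega)$ does not depend on $t$, so that $D^j\rho=0$ on $\partial\Omega$ for all $j\ge0$ and $D(\nabla^s\rho)=\nabla^{s+1}\rho\{u\}$. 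Then $\div u=0$ and the commutation rule give $\div Du=\trace\{(\nabla u)^2\}$, which is \eqref{P1fNew} with $\mathcal{A}_{1}=\{(2,(0,0))\}$; next $\curl Du=-\curl\nabla p=0$, which is \eqref{P2fNew} (with $G^1[u]=0$); finally $D^2\rho=\nabla\rho\cdot Du+\nabla^2\rho\{u,u\}$, restricted to $\partial\Omega$ where $\nabla\rho=\hat{n}$ and $D^2\rho=0$, gives $\hat{n}\cdot Du=-\nabla^2\rho\{u,u\}$, which is \eqref{P4fNew} with $c^3_1=-1$.

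For the induction step I would start from $\div(Dw)=D(\div w)+\trace\{(\nabla u)(\nabla w)\}$ and $\curl(Dw)=D(\curl w)+\as\{(\nabla u)(\nabla w)\}$ with $w=D^k u$, and from $D^{k+1}\rho=D(D^k\rho)$. The key point is that $\nabla D^k u$ is itself an admitted factor (it is $\nabla D^{\alpha}u$ with $\alpha=k$), so that $\trace\{(\nabla u)(\nabla D^k u)\}=\trace\{f((2,(0,k)))[u]\}$ and likewise with $\as$, and $(2,(0,k))\in\mathcal{A}_{k+1}$; while $D(\div D^k u)=\trace\{DF^k[u]\}$, and applying $D$ to a product $\nabla D^{\alpha_1}u\cdots\nabla D^{\alpha_s}u$ produces, through Leibniz and $D(\nabla D^{\alpha_j}u)=\nabla D^{\alpha_j+1}u-(\nabla u)\,\nabla D^{\alpha_j}u$, only products of the same shape in which either one index $\alpha_j$ is raised by $1$ (same $s$) or a factor $\nabla u=\nabla D^0u$ is inserted just before some slot ($s\to s+1$) — and both operations raise the weight $|\alpha|+s$ by exactly one, sending $\mathcal{A}_{k}$ into $\mathcal{A}_{k+1}$. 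The boundary identity goes the same way, with the symmetric $s$-linear form $\nabla^s\rho$ in place of the matrix product, with $D$ acting on the vector slots via $D(D^{\alpha_j}u)=D^{\alpha_j+1}u$, and using $D(\nabla^s\rho)=\nabla^{s+1}\rho\{u\}$; the $s=1$ term $\nabla\rho\cdot D^k u$ created at each step is transferred to the left-hand side of $D^{k+1}\rho=0$, where, on $\partial\Omega$, it equals $\hat{n}\cdot D^k u$ — this is why all the $c^3_k(\theta)$ come out $\le0$.

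The step I expect to be the real obstacle is the coefficient bound. Given $\theta'=(s',\beta)\in\mathcal{A}_{k+1}$, the previous paragraph exhibits $c^i_{k+1}(\theta')$ as a signed sum of the $c^i_k(\theta)$ over the (finitely many) $\theta\in\mathcal{A}_k$ producing $\theta'$: those obtained by lowering one nonzero $\beta_j$ by $1$, with a factor $1$ each, and those obtained by deleting one zero slot of $\beta$ lying in a non-final position, with a factor $-1$ each, plus a single $\pm1$ when $\theta'=(2,(0,k))$. Since $\alpha(\theta)!=\beta!/\beta_j$ in the first case and $\alpha(\theta)!=\beta!$ in the second, the inductive bound $|c^i_k|\le k!/\alpha!$ gives $|c^i_{k+1}(\theta')|\le (k!/\beta!)\,(|\beta|+z')$, where $z'\le s'-1$ is the number of deletable zero slots; as $|\beta|=k+2-s'$, this is $\le (k+1)!/\beta!$, exactly the claimed bound. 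For the boundary forms the extra factor $1/(s-1)!$ simply reflects the symmetry of $\nabla^s\rho$ in its $s$ arguments, of which one is distinguished (it carries the $D^{k}u$ moved to the left-hand side), so only $(s-1)!$ reorderings genuinely contribute and collapse. The remaining verifications — the precise signs, the fact that only $\div u=0$ and $\curl\nabla p=0$ (not the full pressure equation) enter, and the like — being pure book-keeping, I would refer to \cite{ogfstt} for the details.
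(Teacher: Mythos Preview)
Your proposal is essentially correct and, in fact, supplies more than the paper itself does: the paper does not prove Lemma~\ref{P1} but simply records it as a list of ``formal identities, obtained in \cite{ogfstt}''. Your inductive scheme via the commutator $[\nabla,D]=-(\nabla u)\nabla$, together with the Fa\`a~di~Bruno-type expansion of $D^{k+1}\rho$ and the transfer of the $s=1$ term to the left-hand side, is exactly the mechanism used in that reference, and your coefficient bookkeeping yields the stated bounds.

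Two small points of precision are worth flagging. First, in your count for $|c^i_{k+1}(\theta')|$, when $s'=2$ the ``deletable'' zero slot does \emph{not} produce a parent in $\mathcal{A}_k$ (which requires $s\ge 2$); hence for $\theta'=(2,(0,k))$ one has $z'=0$ rather than $1$, and the extra $\pm1$ from the commutator term $(\nabla u)(\nabla D^ku)$ then brings the total to $k+1=(k+1)!/\beta!$ exactly. Second, your explanation of the extra $1/(s-1)!$ for the boundary coefficients as ``reflecting the symmetry of $\nabla^s\rho$'' is morally right but slightly indirect: the structural reason is that $D(\nabla^s\rho)=\nabla^{s+1}\rho\{u,\cdot,\ldots,\cdot\}$ inserts the new slot in a \emph{single} position (rather than in $s$ positions as the commutator does for the matrix products), so the recursion $[\beta_1=0](s'-1)+|\beta|\le k$ closes with the sharper bound. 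Both are cosmetic; the argument stands.
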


Concerning the pressure it is possible to get by induction  from (\ref{2DincEuler}) the following identities (cf. \cite[Prop. 3.5]{katoana}).
\begin{Lemma}\label{pressure}
For $k\geq 1$, we have in  $\Omega$
\begin{equation}
D^k u +\nabla D^{k-1} p= K^k[u] 
	\label{t3.4}
\end{equation}
where $K^{1}[u]=0$ and for $k\geq 2$,
\begin{equation}
\label{ttt}
K^{k}[u]=-\sum_{s=1}^{k-1}  \dbinom{k-1}{ s} \nabla D^{ s-1} u \cdot D^{k-s} u.
\end{equation}
\end{Lemma}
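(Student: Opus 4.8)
The statement to be established is the pressure identity of Lemma \ref{pressure}: for $k\geq 1$ one has $D^k u + \nabla D^{k-1}p = K^k[u]$ in $\Omega$, with $K^1[u]=0$ and, for $k\geq 2$, the explicit expression \eqref{ttt}. The plan is a straightforward induction on $k$, differentiating the Euler momentum equation along the material derivative $D = \partial_t + u\cdot\nabla$ and carefully tracking the commutator between $D$ and $\nabla$. The base case $k=1$ is exactly the momentum equation in \eqref{2DincEuler}, rewritten as $D u + \nabla p = 0$, which is \eqref{t3.4} with $K^1[u]=0$.

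\textbf{The commutator step.} The only algebraic ingredient needed is the identity, valid for any smooth scalar $q$,
\begin{equation*}
D(\nabla q) = \nabla(D q) - (\nabla u)^{*}\,\nabla q ,
\end{equation*}
i.e. $[\,D,\nabla\,]q = -(\nabla u)^{*}\nabla q$; here $(\nabla u)^{*}$ denotes the transpose of the Jacobian matrix of $u$, so that in coordinates $(D\partial_j q) = \partial_j(Dq) - (\partial_j u_i)\partial_i q$. This is a direct computation: $D\partial_j q = \partial_t\partial_j q + u_i\partial_i\partial_j q$ while $\partial_j(Dq) = \partial_j\partial_t q + (\partial_j u_i)\partial_i q + u_i\partial_i\partial_j q$, and subtracting gives the claim.

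\textbf{The induction.} Assume \eqref{t3.4}--\eqref{ttt} hold at level $k\geq 1$. Apply $D$ to both sides of $D^k u + \nabla D^{k-1}p = K^k[u]$. On the left, $D(D^k u) = D^{k+1}u$, and using the commutator step with $q = D^{k-1}p$,
\begin{equation*}
D(\nabla D^{k-1}p) = \nabla(D^k p) - (\nabla u)^{*}\,\nabla D^{k-1}p .
\end{equation*}
Hence $D^{k+1}u + \nabla D^k p = D K^k[u] + (\nabla u)^{*}\,\nabla D^{k-1}p$, so it remains to identify the right-hand side with $K^{k+1}[u]$. For $k=1$ this reads $D^2 u + \nabla D p = (\nabla u)^{*}\nabla p$ (since $K^1=0$), and substituting $\nabla p = -Du$ from the base case gives $D^2 u + \nabla D p = -(\nabla u)^{*}\,D u = -\nabla u\cdot D u$ in the notation of \eqref{ttt}, which is precisely $K^2[u]$ (the sum has the single term $s=1$, $\binom{1}{1}=1$). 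For $k\geq 2$ one substitutes $\nabla D^{k-1}p = K^k[u] - D^k u$ into the term $(\nabla u)^{*}\nabla D^{k-1}p$ and expands $DK^k[u]$ term by term using the Leibniz rule for $D$ on a product, again converting each freshly-produced $\nabla D^{k-1}p$ (there are none of higher order, only $D^k u$ and lower material derivatives of $u$). Collecting terms and re-indexing the binomial sum — using $\binom{k-1}{s} + \binom{k-1}{s-1} = \binom{k}{s}$ after accounting for the Leibniz split of each factor $\nabla D^{s-1}u\cdot D^{k-s}u$ and the contribution $-(\nabla u)^{*}D^k u$ (the $s=k$ term) and $-(\nabla u)^{*}(K^k[u]-\text{already-counted})$ — yields exactly $K^{k+1}[u] = -\sum_{s=1}^{k}\binom{k}{s}\nabla D^{s-1}u\cdot D^{k-s}u$, closing the induction.

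\textbf{Main obstacle.} The only delicate point is the bookkeeping in the re-indexing: one must check that applying $D$ to $\nabla D^{s-1}u\cdot D^{k-s}u$ produces, via the commutator, a term $\nabla D^{s-1}u$ paired with $D^{k-s+1}u$ and a term $\nabla D^{s}u$ paired with $D^{k-s}u$ together with the lower-order commutator correction $-(\nabla u)^{*}(\cdots)$, and that all these contributions — together with the $(\nabla u)^{*}\nabla D^{k-1}p$ term rewritten through the inductive identity — combine with the Pascal relation to give precisely the binomial coefficients $\binom{k}{s}$. This is purely combinatorial and identical in structure to the induction carried out in \cite[Prop. 3.5]{katoana}; no analysis is involved, and regularity is not an issue since, as stated at the start of Section \ref{proof}, we work on a smooth (regularized) flow throughout.
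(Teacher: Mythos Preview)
Your proof is correct and follows precisely the route the paper indicates (induction on $k$ starting from the momentum equation, cf.\ \cite[Prop.~3.5]{katoana}); the paper does not give its own detailed argument. The one point worth making explicit in your write-up is the cancellation you allude to: the commutator corrections $-(\nabla u)\cdot\nabla D^{s-1}u\cdot D^{k-s}u$ arising from $D(\nabla D^{s-1}u)$ sum to exactly $(\nabla u)\cdot K^k[u]$, which cancels against the corresponding piece of $(\nabla u)\cdot\nabla D^{k-1}p=(\nabla u)\cdot(K^k[u]-D^ku)$, leaving only $-(\nabla u)\cdot D^ku$ and the two ``clean'' Leibniz sums to recombine via Pascal's rule.
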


\subsection{Walking down the scale, slowly}
 
 Let 
\begin{equation} \label{nota}
k_0 \in \N^*, \ r \in (0,1), \   p_1  \geq \frac{ 2 }{1-r }
  \text{ and } p_2   \geq  p_1 \cdot (k_0 + 1) .
  \end{equation}
 Let us introduce for $L>0$ the following function
\begin{equation} \label{Eq:DefGammaL}
\gamma(L) := L^{-1} \sup_{k  \geq 1} \left\{ 3 \sum_{s=2}^{k+1}  \ s^M  L^{2-s}  \, c_\rho^s \left(\frac{ k+1 }{ k-s+2 }\right)^2 20^{s} \, + \, C_\Pi   \sum_{s=1}^{k-1}   (\frac{k-s }{k s})^M \left(\frac{ k+1}{ (k-s+1) s } \right)^2 \right\} ,
\end{equation}
where  $c_\rho $ (respectively  $C_\Pi > 0$) is the constant introduced in 
\eqref{bordGevrey}  (resp. Lemma \ref{cpi}).
We fix $L$ large enough such that  
\begin{eqnarray}
\label{le} \gamma(L) \leq \frac{1 }{c_{\Omega}} ,
 \end{eqnarray}
where  $c_{\Omega}$ is the maximum of $1$ and of the constants  $c$ and $c_h$ introduced in Lemma \ref{triv}.
 We are going to prove recursively that for any integer $k  \leqslant  k_0$, 
 \begin{eqnarray}
  \label{proofit}
  \| D^k u \|_{ W^{1, \frac{p_2}{k+1}} (\Omega) }  
   \leqslant  p_2^k \frac{ (k!)^M L^k }{(k+1)^2} \|u\|^{k+1}_{ W^{1,p_2}   (\Omega) }   ,
\end{eqnarray}

 For $k=0$ there is nothing to prove.
  Now let us assume that Eq. \eqref{proofit} is proved up to $  k -1 \leqslant  k_0 -1$. 
  
\subsubsection{Estimate of $F^k  [u]$ and $G^k  [u]$}
 
 Applying the H\"older inequality \eqref{holder} to the definition of  $f( \theta)  [u]$ in
 \eqref{DefsFetH},  for $\theta  \in \mathcal{A}_{k} $,   yields that 
\begin{equation*}
 \| f( \theta)  [u]  \|_{ L^{ \frac{p_2}{k+1}} (\Omega) }
 \leq  \prod_{i=1}^s  \| D^{\alpha_i } u \|_{ W^{1,\frac{p_2}{ \alpha_i +1}}   (\Omega) } .
\end{equation*}
Using  the induction hypothesis and since for  $\theta  \in \mathcal{A}_{k} $, $| \alpha | = k+1 - s$, we have
\begin{equation*}
 \| f( \theta)  [u]  \|_{ L^{ \frac{p_2}{k+1}} (\Omega) }
 \leq L^k  \,  \| u \|^{k+1}_{ W^{1,p_2}   (\Omega) }  \, (\alpha !)^M
 L^{1-s}  p_2^{| \alpha | }   \prod_{i=1}^s  \frac{1}{ (1+\alpha_i )^2}.
\end{equation*}
Now thanks to Lemma \ref{P1},   we obtain
\begin{eqnarray*}
\| F^k  [u]  \|_{ L^{ \frac{p_2}{k+1}} (\Omega) }
 \leq    k! L^k  \,  \| u \|^{k+1}_{ W^{1,p_2}   (\Omega) }  \,  \sum_{s=2}^{k+1} \ L^{1-s}  \, 
\sum_{ \alpha  /  \, | \alpha | = k+1 - s }  \,   (\alpha !)^{M-1} p_2^{ | \alpha |} \prod_{i=1}^s  \frac{1}{ (1+\alpha_i )^2} .
\end{eqnarray*}
When  $\theta  \in \mathcal{A}_{k} $, $2  \leq s \leq  k+1$ and $| \alpha | = k+1 - s$, then $| \alpha |  \leq k-1$ so that 
\begin{eqnarray}\label{new0.0}
\| F^k  [u]  \|_{ L^{ \frac{p_2}{k+1}} (\Omega) }
 \leq  p_2^{k-1}  (k!)^M L^k  \,  \| u \|^{k+1}_{ W^{1,p_2}   (\Omega) }  \,  \sum_{s=2}^{k+1} \  \frac{L^{1-s}}{k^{M-1}}  \, 
\sum_{ \alpha  /  \, | \alpha | = k+1 - s }  \,  \prod_{i=1}^s  \frac{1}{ (1+\alpha_i )^2}  .
\end{eqnarray}
We now use the following combinatorial lemma (cf. \cite[Lemma 7.3.3]{cheminsmf}).  
\begin{Lemma} \label{LemmeCheminSMF}
For any couple of positive integers $(s,m)$ we have
\begin{equation}\label{DefUpsilon}
\sum_{\substack{{\alpha \in \N^{s}} \\ {|\alpha|=m} }}  \Upsilon(s,\alpha) \leq \frac{20^{s}}{(m+1)^{2}}, \text{ where }
\Upsilon(s,\alpha):=\prod_{i=1}^s \frac{1}{(1+\alpha_{i})^2}.
\end{equation}
\end{Lemma}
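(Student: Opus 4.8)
This is a classical combinatorial estimate (it is \cite[Lemma 7.3.3]{cheminsmf}), and the plan is to prove it by induction on the number $s$ of components, the whole argument reducing to one elementary one-dimensional inequality. First I would dispose of the degenerate cases. For $s=1$ the only $\alpha\in\N$ with $|\alpha|=m$ is $\alpha=(m)$, so the left-hand side of \eqref{DefUpsilon} equals $(1+m)^{-2}\leq 20\,(1+m)^{-2}$. For $m=0$ and any $s$, the left-hand side is $\Upsilon(s,\mathbf 0)=1\leq 20^{s}$. Having checked the case $m=0$ separately, I may freely invoke the inequality for every nonnegative value of $m$ in the inductive step.

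The heart of the matter is the following one-variable bound: for every integer $m\geq 0$,
\begin{equation*}
\Sigma(m):=\sum_{j=0}^{m}\frac{1}{(1+j)^{2}\,(1+m-j)^{2}}\leq \frac{16}{(m+1)^{2}}.
\end{equation*}
To get it I would split the sum according to whether $j\geq m/2$ or $j<m/2$: in the first case $1+j\geq 1+m/2\geq (m+1)/2$, hence $(1+j)^{2}\geq (m+1)^{2}/4$; in the second case $m-j>m/2$, hence $(1+m-j)^{2}\geq (m+1)^{2}/4$. Pulling the factor $4/(m+1)^{2}$ out of the corresponding term, and reindexing the first of the two resulting subsums by $k:=m-j$, each piece is bounded by $\frac{4}{(m+1)^{2}}\sum_{n\geq 0}(1+n)^{-2}$, and I would estimate $\sum_{n\geq 0}(1+n)^{-2}=\sum_{n\geq 1}n^{-2}\leq 1+\sum_{n\geq 2}\frac{1}{n(n-1)}=2$ by telescoping, which avoids invoking the value of the series. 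This gives $\Sigma(m)\leq 16/(m+1)^{2}$.

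For the inductive step, assume \eqref{DefUpsilon} for $s-1$ (with $s\geq 2$) and every nonnegative integer. Peeling off the last coordinate, writing $\alpha=(\beta,j)$ with $\beta\in\N^{s-1}$, summing first over $\beta$ with $|\beta|=m-j$, then using the induction hypothesis on the inner sum and the definition of $\Sigma$,
\begin{equation*}
\sum_{\alpha\in\N^{s},\,|\alpha|=m}\Upsilon(s,\alpha)
=\sum_{j=0}^{m}\frac{1}{(1+j)^{2}}\sum_{\beta\in\N^{s-1},\,|\beta|=m-j}\Upsilon(s-1,\beta)
\leq 20^{s-1}\,\Sigma(m)\leq\frac{16\cdot 20^{s-1}}{(m+1)^{2}}\leq\frac{20^{s}}{(m+1)^{2}},
\end{equation*}
the last step using $16\leq 20$; applying the induction hypothesis with $m-j=0$ is legitimate thanks to the case settled at the outset. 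This closes the induction. I do not expect any genuine obstacle here: the only mildly delicate points are the case distinction in $\Sigma(m)$ (when $m$ is even the two ranges overlap at $j=m/2$, which only weakens the bound) and making sure the induction hypothesis may be applied with $m-j=0$; the comfortable gap between the constants $16$ and $20$ shows that the stated value is far from optimal, which is all that matters in the sequel.
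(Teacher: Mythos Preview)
Your argument is correct: the induction on $s$ reduces the estimate to the one-variable convolution bound $\Sigma(m)\leq 16/(m+1)^2$, and your splitting at $j=m/2$ together with $\sum_{n\geq 1}n^{-2}\leq 2$ handles that cleanly. The paper itself does not give a proof of this lemma but simply cites \cite[Lemma 7.3.3]{cheminsmf}, so there is no argument in the paper to compare yours with; your proof is precisely the standard one behind that reference.
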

We deduce from \eqref{new0.0} and from the above lemma that
\begin{eqnarray} \label{DeCadix}
 \| F^k  [u]  \|_{ L^{ \frac{p_2}{k+1}} (\Omega) }
    \leq  p_2^{k-1} \frac{ (k!)^M L^k }{(k+1)^2}  \| u \|^{k+1}_{ W^{1,p_2}   (\Omega) } \,  \sum_{s=2}^{k+1} \  \frac{L^{1-s}}{k^{M-1}}   \,  20^{s}   \, \frac{(k+1)^2}{ (k-s+2 )^2}.
\end{eqnarray}
We have the same bound on $  \| G^k[u]  \|_{ L^{ \frac{p_2}{k+1}} (\Omega) }$ using \eqref{P2fNew} instead of \eqref{P1fNew}. 

  \subsubsection{Estimate of $H^k  [u]$}
 
 Applying  \eqref{holder} and using 
\eqref{bordGevrey}  yields, for $\theta  \in \mathcal{A}_{k} $, that 
\begin{equation*}
 \| h( \theta)  [u]  \|_{  W^{1, \frac{p_2}{k+1}} (\cW ) }
 \leq c_\rho^s   \,    (s!)^M  \prod_{i=1}^s  \|  D^{\alpha_i} u \|_{ W^{1,\frac{p_2}{ \alpha_i +1}}   (\Omega) } .
\end{equation*}
By using  the induction hypothesis, we have
\begin{equation*}
 \| h( \theta)  [u]  \|_{  W^{1, \frac{p_2}{k+1}} (\cW ) }
 \leq L^k  \,  \| u \|^{k+1}_{ W^{1,p_2}   (\Omega) }  \, (\alpha !)^M  (s!)^M
 L^{1-s} c_\rho^s   \,    p_2^{| \alpha | }   \prod_{i=1}^s  \frac{1}{ (1+\alpha_i )^2}.
\end{equation*}
Thanks to Lemma \ref{P1}  and Lemma \ref{LemmeCheminSMF} we obtain
\begin{eqnarray}
\label{new1.4}
\| H^k  [u] \|_{W^{1,\frac{p_2}{k+1}} (\cW)} \leq  p_2^{k-1}
\frac{ (k!)^M L^k }{(k+1)^2}  \| u \|^{k+1}_{ W^{1,p_2}  (\Omega ) }  \sum_{s=2}^{k+1} \ s^M  L^{1-s}  \, c_\rho^s   \,  20^{s} \,   \frac{(k+1)^2}{ (k-s+2 )^2}  .
\end{eqnarray}
 \subsubsection{Estimate of $K^k  [u]$}
 
  Applying the H\"older inequality \eqref{holder} to the definition \eqref{ttt} of $K^k  [u]$
  yields for any $k \geqslant 2$, 
 \begin{eqnarray*}
 \|  K^k  [u]  \|_{ L^{ \frac{p_2}{k+1} } (\Omega) }  \leqslant 
\sum_{s=1}^{k-1} \  \dbinom{k-1}{s}  \| D^{s-1} u \|_{ W^{1,\frac{p_2}{s} }  (\Omega)}
\| D^{k-s} u \|_{ W^{1,\frac{p_2}{k+1-s} } (\Omega)}  .
 \end{eqnarray*}
  By using  the induction hypothesis   we get 
 \begin{eqnarray*}
 \|  K^k  [u]  \|_{ L^{ \frac{p_2}{k+1} } (\Omega) } 
  \leqslant  p_2^{k-1} \frac{ (k!)^M L^k }{(k+1)^2}  \| u \|^{k+1}_{ W^{1,p_2}   (\Omega) } \,  L^{-1} \sum_{s=1}^{k-1} \  (\frac{k-s }{k s})^M  \left( \frac{k+1 }{s(k-s+1)}\right)^2 .
 \end{eqnarray*}
Finally using Lemma  \ref{pressure}  and  Theorem  \ref{hodge} we have   $ \Pi D^k u =  \Pi  K^k [u] $ so that, thanks to Lemma \ref{cpi}, we get
 \begin{eqnarray}
 \label{pi}
\|  \Pi   D^k u   \|_{ L^{ \frac{p_2}{k+1} } (\Omega) }  \leqslant   C_\Pi p_2^{k-1} \frac{ (k!)^M L^k }{(k+1)^2}  \| u \|^{k+1}_{ W^{1,p_2}   (\Omega) } \,  L^{-1}
\sum_{s=1}^{k-1} \   (\frac{k-s }{k s})^M  \left( \frac{k+1 }{s(k-s+1)}\right)^2 .
 \end{eqnarray}

  \subsubsection{Conclusion}
 
 We now apply Lemma  \ref{triv} to $f=  D^k u$ (observing that, thanks to \eqref{nota}, we have  $ \frac{ p_2 }{ k+1}  > 2$) and we use \eqref{Eq:DefGammaL}-\eqref{le}-\eqref{DeCadix}-\eqref{new1.4}-\eqref{pi} to get  Eq. \eqref{proofit} at rank $k$.
 
  \subsection{Walking down the scale, from high enough}
  \label{accord}
We now apply  Eq. \eqref{proofit}  with  $p_2 (k+1)$ instead of $p_2$, and we use Stirling's formula to obtain that for any $k \in \N$, for any $p_2  \geq \frac{2}{1-r}$,
 \begin{eqnarray}
  \label{proofit3}
    \| D^k u \|_{ W^{1,p_2 } (\Omega)  }  
       \leqslant  (p_2 (k+1))^k \frac{(k!)^{M} L^k }{(k+1)^2}   \| u \|^{k+1}_{ W^{1,p_2  (k+1)}   (\Omega) }     \leqslant   p_2^k (k!)^{M+1} \tilde{L}^k    \| u \|^{k+1}_{ W^{1,p_2  (k+1)}   (\Omega) }   .
   \end{eqnarray}
 So far time has intervened only as a parameter, and the inequality  \eqref{proofit3} holds for any time. We will now estimate its right hand side with respect to the initial data.
 First thanks to  \eqref{reg2}
 there exists $c >0$ such that  for any $k$,
\begin{equation*}
\| u \|_{ W^{1,p_2  (k+1)} (\Omega) } \leq c p_2  (k+1)
  \|\curl  u \|_{ L^{p_2  (k+1)}(\Omega) }  + c  \sum_{i=1}^d  | \Gamma_i ( u)  |.
\end{equation*}
Now conservation of the $L^p$ norms of the vorticity and Kelvin's circulation theorem yields
\begin{eqnarray*}
\| u \|_{ W^{1,p_2  (k+1)} (\Omega) } &\leq& c p_2  (k+1)
  \| \omega_0 \|_{ L^{p_2  (k+1)}(\Omega) }  + c  \sum_{i=1}^d  | \Gamma_i ( u_0)  | ,
  \\ &\leq& c p_2  (k+1) \theta( p_2  (k+1) )  + c  \sum_{i=1}^d  | \Gamma_i ( u_0)  | ,
\end{eqnarray*}
since $u_0$ in  $\bY_\theta$.
Plugging this into  \eqref{proofit3} and using again Stirling's formula, we obtain that  there exists $L >0$ depending only on $\Omega$ such that  for any $k$,
 \begin{eqnarray*}
  \| D^k u \|_{ W^{1,p_2 } (\Omega)  }
       \leqslant   p_2^k (k!)^{M+1} L^{k+1}   \big(k!  p_2^{k+1} \theta( p_2  (k+1) )^{k+1}  + \sum_{i=1}^d  | \Gamma_i ( u_0)  |^{k+1} \big) .
   \end{eqnarray*}
Thanks to Morrey's inequality, there exists $C>0$ such that for any smooth function $u$ on $\overline{\Omega}$, for any $r \in (0,1)$,
$  \| f \|_{ C^{0,r} (\Omega)  }
       \leqslant  C  \| f \|_{ W^{1,p_2 } (\Omega)  } $,
 where $p_2 = 2/(1-r)$.
 This allows to bound $ \| D^k u \|_{ C^{0,r} (\Omega) }$ thanks to $ \| D^k u \|_{ W^{1,p_2 } (\Omega)  }$. Then we
differentiate Eq. \eqref{flow} to get  $\partial^{k +1 }_t \Phi  (t,x)= D^k u (t, \Phi (t,x)).$
We consider $T>0$ and we use  Lemma \ref{trivcomp}, 
  and the proof of Theorem \ref{start22} is over.

 \section{Proof of Theorem \ref{localreg}}

 In order to  obtain the propagation of local smoothness  we will use some 
interior elliptic regularity, instead of Lemma  \ref{triv} and  \ref{triv2}. 
Let us first recall the following Schauder estimate  (cf. \cite{GT}).
\begin{Lemma}
\label{SchauderHolder}
Let $\mathcal{D}$ be an open set such that  $\overline{\mathcal{D}}  \subset \Omega$.
Let $u$ be a continuous vector field on $\overline{\Omega}$  such that $\div u = 0 $ in $ \Omega$,  $ \hat{n} \cdot u = 0$ on  $\partial \Omega$ and  $\curl  u |_{\mathcal{D} } $ in $C^{\lambda ,r}_{\text{loc}} ({\mathcal{D}})$. Then  $u$ is in $C^{\lambda +1 ,r}_{\text{loc}} ({\mathcal{D}})$.
\end{Lemma}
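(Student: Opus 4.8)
The statement is of purely interior nature on $\mathcal{D}$, so that among the hypotheses only $\div u = 0$ and $\curl u|_{\mathcal{D}} \in C^{\lambda ,r}_{\text{loc}}(\mathcal{D})$ will actually be used; the conditions on $\partial\Omega$ play no role here. The plan is to reduce to a local statement on small balls and to pass to a stream function.

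First I would fix a compact $K \subset \mathcal{D}$ and cover it by finitely many open balls $B$ with $\overline{B} \subset \mathcal{D}$; it then suffices to prove $u \in C^{\lambda+1,r}_{\text{loc}}(B)$ for each such ball. On the simply connected set $B$ the condition $\div u = 0$ (in the sense of distributions) says that the $1$-form with continuous coefficients $u_2\, dx_1 - u_1\, dx_2$ is closed, hence, by the Poincaré lemma applied at the level of distributions, exact: there is $\psi$ on $B$ with $u = \nabla^\perp\psi$, and since $u$ is continuous one gets $\psi \in C^1(B)$. A direct computation then gives $\Delta\psi = \curl u$, which by hypothesis lies in $C^{\lambda ,r}_{\text{loc}}(B)$.

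It remains to invoke the interior Schauder estimates for the Laplacian (see \cite{GT}): if $\psi \in C^1(B)$ and $\Delta\psi \in C^{\lambda ,r}_{\text{loc}}(B)$, then $\psi \in C^{\lambda+2,r}_{\text{loc}}(B)$. For $\lambda \geq 1$ this is the classical interior Schauder estimate; for $\lambda = 0$ one first notes that $\Delta\psi$ continuous already yields $\psi \in C^{1,r'}_{\text{loc}}(B)$ for every $r' < 1$, and then applies the $C^{2,r}$ interior estimate for the Newtonian potential. In all cases $u = \nabla^\perp\psi \in C^{\lambda+1,r}_{\text{loc}}(B)$, and patching over the finite cover of $K$, then letting $K$ exhaust $\mathcal{D}$, gives the lemma.

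The only slightly delicate point is the construction of the local stream function and the control of its regularity starting from the mere continuity of $u$: one must check that the Poincaré/de Rham argument applies in the distributional category so that $\psi$ is a genuine $C^1$ primitive. An alternative avoiding stream functions is to use that the components satisfy $\Delta u_1 = -\partial_2\curl u$ and $\Delta u_2 = \partial_1\curl u$ and to apply Schauder estimates to these equations directly; but this requires one derivative of $\curl u$, hence negative Hölder classes when $\lambda = 0$, whereas the stream-function formulation keeps everything in nonnegative Hölder spaces and is therefore the cleanest route.
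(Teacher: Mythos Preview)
Your argument is correct. The paper does not actually prove this lemma: it is stated as a classical interior Schauder estimate with a bare reference to \cite{GT}, so there is no proof in the paper to compare against. Your reduction to a local stream function $\psi$ with $\Delta\psi = \curl u$ is precisely the standard way to bring the vector problem back to the scalar Schauder theory of \cite{GT}, and your observation that the boundary hypotheses are irrelevant for this interior statement is accurate. One minor remark: the case split at $\lambda=0$ is not needed, since the Newtonian-potential estimate in \cite{GT} already gives $\psi\in C^{2,r}_{\loc}$ directly from $\Delta\psi\in C^{0,r}_{\loc}$ without any intermediate $C^{1,r'}$ step, and higher $\lambda$ then follow by differentiating the equation. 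For context, when the paper \emph{does} spell out a proof for the Dini analogue (Lemma~\ref{SchauderDini}), it takes a slightly different route: it uses the global boundary data to build a global stream function $\Psi_0$ via the Dirichlet problem on $\Omega$ and the Hodge decomposition, and then compares it to the local Newtonian potential on $\mathcal{D}$. Your purely local Poincar\'e construction on small balls is lighter and sidesteps the Hodge machinery, at the modest cost of not producing a single global potential.
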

Lemma  \ref{SchauderHolder}  extends as follows to Dini continuous vorticities.
\begin{Lemma}
\label{SchauderDini}
Let $\mathcal{D}$ be an open set such that  $\overline{\mathcal{D}}  \subset \Omega$.
Let $u$ be a continuous vector field on $\overline{\Omega}$ such that $\div u = 0 $ in $ \Omega$,  $ \hat{n} \cdot u = 0$ on  $\partial \Omega$ and  $\curl  u |_{\mathcal{D} } $
 in $C_{\mu,\text{loc}} (\mathcal{D})$,  with $\mu$ is a Dini modulus of continuity. Then $u$ is in  $C^1 ({\mathcal{D}})$.
\end{Lemma}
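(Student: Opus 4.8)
The plan is to reduce the statement to an interior regularity result for the Poisson equation and then to invoke the Dini analogue of the classical Schauder estimate, in exact parallel with the proof of Lemma~\ref{SchauderHolder}. Since being $C^1$ is a local property, it suffices to show that $u$ is $C^1$ in a neighbourhood of each point $x_0 \in \mathcal{D}$. Fix such a point and choose a ball $B := B(x_0,\varrho)$ with $\overline{B} \subset \mathcal{D}$. Because $B$ is simply connected and $\div u = 0$ in the sense of distributions, the Poincar\'e lemma (for distributions) furnishes a stream function $\psi$ on $B$, unique up to an additive constant and in fact continuous since $\nabla\psi$ is a rotation of the continuous field $u$, with $u = \nabla^{\perp}\psi$; consequently $\omega := \curl u$ satisfies $\Delta\psi = \omega$ in the sense of distributions on $B$. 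By hypothesis $\omega \in C_{\mu,\text{loc}}(\mathcal{D})$, so for any ball $B'$ with $\overline{B'} \subset B$ we have $\omega \in C_\mu(\overline{B'})$, with $\mu$ a Dini modulus of continuity.

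Next I would split off the singular part. Let $E(x) := \frac{1}{2\pi}\log|x|$ be the fundamental solution of the Laplacian in the plane and set $N\omega(x) := \int_{B'} E(x-y)\,\omega(y)\,dy$, a continuous function on $\R^2$ with $\Delta(N\omega) = \omega$ on $B'$ in the sense of distributions. Then $\psi - N\omega$ is a distribution which is harmonic on $B'$, hence, by Weyl's lemma, agrees there with a $C^\infty$ function; it therefore remains only to check that $N\omega \in C^2(B'')$ for some smaller ball $B'' \subset B'$. This is the classical fact that the Newtonian potential of a bounded Dini continuous function has continuous second derivatives: differentiating the mollified potentials twice and passing to the limit, $\partial^2 (N\omega)(x)$ is the sum of a term depending only on $\omega(x)$ and a geometric constant, plus an integral of the type $\int_{B'} \partial^2 E(x-y)\,\big(\omega(y)-\omega(x)\big)\,dy$; since $|\partial^2 E(x-y)| \lesssim |x-y|^{-2}$ and $|\omega(y)-\omega(x)| \le \|\omega\|_{C_\mu(\overline{B'})}\,\mu(|x-y|)$, this integral converges absolutely and depends continuously on $x$ precisely because $\mu$ is Dini, i.e. $\int_0^a \mu(h)/h\,dh < \infty$. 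This is the Dini refinement of \cite[Lemma~4.2]{GT}. Granting it, $\psi \in C^2(B'')$ and hence $u = \nabla^{\perp}\psi \in C^1(B'')$; as $x_0$ was arbitrary, $u \in C^1(\mathcal{D})$.

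The only substantive analytic point is the last one, namely the justification of the principal-value representation of $D^2(N\omega)$ together with the continuity of the resulting function under the Dini hypothesis; everything else (local existence of $\psi$, smoothness of the harmonic remainder via Weyl's lemma, and patching the local pieces) is routine. I expect this to be the main obstacle, and would dispatch it either by carrying out the kernel estimate sketched above directly, or by quoting the corresponding statement from classical potential theory, the structure being identical to the H\"older case underlying Lemma~\ref{SchauderHolder}.
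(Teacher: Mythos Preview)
Your proof is correct and follows essentially the same strategy as the paper: reduce to a stream function, split off the Newton potential of the vorticity, observe that the remainder is harmonic and hence smooth, and invoke the fact (which the paper proves in Appendix~A as Lemma~\ref{SchauderDiniPo}) that the Newton potential of a Dini continuous function is $C^2$. The only difference is cosmetic: you construct the stream function locally on a ball via the Poincar\'e lemma and dispose of the harmonic remainder by Weyl's lemma, whereas the paper works globally, taking $\Psi_0$ to be the Dirichlet solution of $\Delta\Psi_0=\curl u$ on all of $\Omega$ and identifying $u-\nabla^\perp\Psi_0$ as a tangential harmonic vector field.
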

Above the space $C_{\mu,\text{loc}} (\Omega_0 )$ denotes the set of 
  functions  which are in  $C_{\mu} (K)$ for any compact subset $K \subset \Omega_0 $.
 We refer to  the paper  \cite{koch} for a closer statement.
  We provide a proof for sake of completeness.
 \begin{proof}
 Let us introduce  $  \Psi_0$ the unique solution of   $ \Delta  \Psi_0  = \curl u   $ in $\Omega$ and $  \Psi_0  = 0$ on  $\partial \Omega$. 
 We then denote  $ v = \nabla^\perp \Psi_0 $ and observe that $u-v $ is in the space $H$ of tangential harmonic vector fields of $\Omega$.
 Next we introduce 
 $$ \Psi (x):=  \frac{1}{2\pi} \int_{\mathcal{D} }  \log   \| x-y \| \cdot  (\curl  u) (y) dy ,$$
 which is  in  $C^2 ({\mathcal{D}})$ and satisfies  $ \Delta  \Psi  =  \curl  u $ in $\mathcal{D}$, according to Lemma \ref{SchauderDiniPo} in Appendix A. It suffices to observe that $ \Psi- \Psi_0 $ is harmonic in $\mathcal{D}$  to conclude the proof.
  \end{proof}
We are now equipped to start the proof of  Theorem  \ref{localreg}.
Let us recall that we assume that  the boundary  $\partial \Omega$  of  the domain is $C^2$,  that the initial data $u_0$ is in  $\bY_{\theta_m }$, with   $m \in \N^*$, that $\Omega_0$ is a  open subset such that 
 $\overline{ \Omega_0 } \subset  \Omega $ and  that  $\omega_0 |_{ {  \Omega_0}}$ is in $C^{\lambda,r}_{\text{loc}} (  \Omega_0 )$ with $\lambda$ in $\N$ and $r \in (0,1)$. 
Since  $ \Phi(t,.)^{-1 } $ satisfies \eqref{flow} with $-u$ instead of $u$, 
arguing as in Lemma \ref{Gamma}, we have that $ \Phi(t,.)^{-1 } $ is in  $C^0 (\lbrack 0, T  \rbrack, C_{\Gamma_t} (\Omega))$. 
Let us denote $\Omega_t := \{  \Phi(t,x) , \/ \ x \in \Omega_0 \} $.
Proceeding as in Lemma \ref {trivcomp}, we get that $\omega (t,\cdot) := \omega_0 (\Phi(t,.)^{-1 } )
 \in C^0 (\lbrack 0, T  \rbrack, C_{\Gamma_t^r   ,\text{loc}} ( {\Omega}_t  ))$ (this notation is slightly improper but does not lead here to any confusion). Now thanks to  Lemma \ref{mdini}, the modulus of continuity $\Gamma_t^r$ is Dini. Applying  Lemma \ref{SchauderDini} yields that  $u $ is in  $C^0 (\lbrack 0, T  \rbrack, C^1 (\Omega_t ))$.
By integration, we infer that $ \Phi$ and  $t \mapsto \Phi(t,.)^{-1 } $ are in  $C^0 (\lbrack 0, T  \rbrack, C^1 (\Omega_0 ))$. Proceeding again as in Lemma \ref {trivcomp}, we get that $\omega (t,\cdot) \in C^{0,r}_{\text{loc}}  ( {\Omega}_t  )$. Then Lemma  \ref{SchauderHolder} yields that  $u (t,\cdot)$ is in  $ C^{1,r}_{\text{loc}}  (\Omega_t )$. We can now repeat the bootstrapping arguments exactly as in Proposition $8.3$ of \cite{BM} to end the proof.

 \section{Proof of Theorem \ref{start22localh}}

This section is devoted to the proof of Theorem \ref{start22localh}.
Let us start by repeating two preliminary remarks of the proof  of Theorem \ref{start22}. First we will focus on the Gevrey case, the $C^\infty$ case being a byproduct of the analysis. Secondly we will work from now on a smooth flow, the result following by a classical regularization argument.

\subsection{Shrinking the compact, slowly}
 
 Let us first introduce a notation: when $K$ is a compact and $\eps > 0$ we denote $K_\eps := \{ x \in K / \   \text{ dist }  (x,K^c ) \geqslant   \eps  \} $. 
  Let us also introduce for $L>0$ the following function
\begin{equation} \label{Eq:DefGammaL2}
\tilde{\gamma} (L) := 2 L^{-1} \sup_{k  \geq 1}  \sum_{s=2}^{k+1}  \ k^{-M}  L^{2-s}  \, \left(\frac{ k+1 }{ k-s+2 }\right)^2 20^{s}  .
\end{equation}
We fix $L$ large enough (depending on $r$) such that for any integer $k  \geqslant  1$,
 \begin{eqnarray}
  \label{proofit333}
    \| D^k u \|_{ C^{0,r } (\Omega)  }  
       \leqslant c_\Omega \ {\gamma}(L) \frac{  (k!)^{M+1} {L}^k  }{(k+1)^2}   \| u \|^{k+1}_{ W^{1,p_2  (k+1)}   (\Omega) }   
   \end{eqnarray}
with $p_2 := \frac{ 2 }{1-r }$ (what is possible according to the analysis of section \ref{accord})  and such that 
\begin{eqnarray}
\label{le2}   c_\Omega \, c^{ \lambda_0 + 1 }_{i} (\gamma (L)  +   \tilde{\gamma}(L) ) \leq  1 ,
 \end{eqnarray}
where  $c_{i}$ will appear  in Lemma \ref{LemmeInterior}.

 Let  $k_0 \in \N^*$ and $\eps > 0$ such that $  \text{ diam } K > k_0 \eps$.
We are going to prove recursively for any integer $ 1  \leqslant \lambda  \leqslant  \lambda_0 + 1$,  and then recursively for any integer $k $ such that $1  \leqslant k \leqslant  k_0$ that
 \begin{eqnarray}
  \label{decrea}
  \| D^k u \|_{ C^{ \lambda,r} (K_{k \eps } ) }  
   \leqslant     c_\Omega \, c^{ \lambda  }_{i} (\gamma (L)  +   \tilde{\gamma}(L) ) 
 \frac{ (k!)^{M+1}  L^k \eps^{-k \lambda (1 +r)}}{(k+1)^2} 
 N_{\lambda , r , K  }^{k+1} ,
\end{eqnarray}
 where 
 $$
 N_{\lambda , r , K  }  := \|u\|_{   C^{\lambda ,r} (K  ) }    +    \|u\|_{ W^{1,p_2 (k+1)}   (\Omega) }.$$
 
 Let us assume that Eq. \eqref{decrea} is proved up to $  k -1 \leqslant  k_0 -1$.

Looking forward to the definition of  $f( \theta)  [u]$ in
 \eqref{DefsFetH}, we have that, for $\theta  \in \mathcal{A}_{k} $,  
\begin{equation*}
 \| f( \theta)  [u]  \|_{ C^{\lambda -1 ,r} (K_{(k-1) \eps } ) }
 \leq  \prod_{i=1}^s  \| D^{\alpha_i } u \|_{  C^{\lambda,r} (K_{(k-1) \eps } ) } 
 \leq  \prod_{i=1}^s  \| D^{\alpha_i } u \|_{ C^{\lambda,r} (K_{ \alpha_i  \eps } )  } .
\end{equation*}

Using  the induction hypothesis, that \eqref{le2} and $c_i > 1$ imply $ c_\Omega \, c^{ \lambda }_{i} (\gamma (L)  +   \tilde{\gamma}(L) ) \leq  1 $, 
we therefore obtain:
\begin{equation*}
 \| f( \theta)  [u]  \|_{ C^{\lambda -1 ,r} (K_{(k-1) \eps } ) }
 \leq    \eps^{- (k-1)  \lambda  ( 1 +r)}  L^k    N_{\lambda , r , K  }^{k+1} ( \alpha !)^{M+1} L^{1-s}  \prod_{i=1}^s  \frac{1}{ (1+\alpha_i )^2}.
\end{equation*}
Now thanks to Lemma \ref{P1},   we obtain
\begin{eqnarray*}
\| F^k  [u]  \|_{ C^{\lambda -1 ,r} (K_{(k-1) \eps } ) }
 \leq  
(k!)^{M+1}   \eps^{-(k-1)  \lambda (1 +r) } L^k  N_{\lambda , r , K  }^{k+1}  \,  \sum_{s=2}^{k+1} \,  \frac{L^{1-s}}{k^{M}}  
\sum_{ \alpha  /  \, | \alpha | = k+1 - s }  \,    \prod_{i=1}^s  \frac{1}{ (1+\alpha_i )^2}  .
\end{eqnarray*}
 Using  Lemma \ref{LemmeCheminSMF} we obtain
\begin{eqnarray*}
\| F^k  [u]  \|_{ C^{\lambda -1 ,r} (K_{(k-1) \eps } ) }
 \leq  \frac{ (k!)^{M+1} L^k }{(k+1)^2} 
   \eps^{-(k-1) \lambda (1 +r)  }   N_{\lambda , r , K  }^{k+1}  \,  \sum_{s=2}^{k+1} \,  \frac{L^{1-s}}{k^{M}}  
    \,  20^{s}   \, \frac{(k+1)^2}{ (k-s+2 )^2}.
\end{eqnarray*}
 We have the same bound on $  \| G^k[u]  \|_{  C^{0,r} (K_{(k-1) \eps } )  }$. 
 
 In order to obtain  \eqref{decrea} it then suffices to apply the following lemma to $f = D^k u $ and   $\tilde{ \eps } = (k-1) \eps  $ using  that \eqref{le2} implies $ c_{i}    \tilde{\gamma}(L)  \leq  1 $, 
 the inequality  \eqref{le2}, and the inequality  \eqref{proofit333} (respectively  the inequality \eqref{decrea} with $\lambda-1$ instead of $\lambda$ ) if $ \lambda = 1$ (resp. if $ \lambda > 1$).

\begin{Lemma}
\label{LemmeInterior}
 There exists $c_{i} > 1$ such that for any $\eps , \tilde{\eps}$ in $(0,1)$, for any $f  \in C^{ \lambda - 1,r} (K_{ \tilde{\eps} } )$, such that $\div f$ and $\curl  f$ are also  in $C^{ \lambda -1,r} (K_{ \tilde{\eps} } )$, then 
 $f  \in C^{ \lambda,r} (K_{\eps + \tilde{\eps} })$ and 
 \begin{eqnarray}
  \label{int1}
  \| f \|_{  C^{\lambda ,r} (K_{\eps + \tilde{\eps} } ) }  
   \leqslant  
c_{i} \, \eps^{-(1+r)} (  \| f \|_{  C^{\lambda -1,r} (K_{\tilde{\eps}} )  }  +  \| \curl  f \|_{  C^{\lambda -1,r} (K_{\tilde{\eps}} )  } 
+  \| \div  f \|_{  C^{\lambda-1,r} (K_{\tilde{\eps}} )}) .
\end{eqnarray}
\end{Lemma}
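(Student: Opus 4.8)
The estimate \eqref{int1} is a quantitative (scale–explicit) version of the interior elliptic regularity for the div–curl system, and the plan is to reduce it to the case $\lambda=1$ on a single ball, then patch by a covering argument. First, observe that if $x_0 \in K_{\eps+\tilde\eps}$ then $B(x_0,\eps)\subset K_{\tilde\eps}$, since any $x$ with $|x-x_0|\le\eps$ satisfies $\dist(x,K^c)\ge \dist(x_0,K^c)-\eps\ge\tilde\eps>0$. Moreover $K_{\eps+\tilde\eps}\subset K_{\tilde\eps}$, so all the derivatives $\partial^\alpha f$ with $|\alpha|\le\lambda-1$ are already controlled on $K_{\eps+\tilde\eps}$ by $\|f\|_{C^{\lambda-1,r}(K_{\tilde\eps})}$; hence it suffices to bound $\|\partial^\alpha f\|_{C^{0,r}(K_{\eps+\tilde\eps})}$ for $|\alpha|=\lambda$. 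The reduction to $\lambda=1$ is by differentiation: for $|\beta|=\lambda-1$ the field $g:=\partial^\beta f$ satisfies $\curl g=\partial^\beta\curl f$ and $\div g=\partial^\beta\div f$, both in $C^{0,r}$ on any ball $B(x_0,\eps)\subset K_{\tilde\eps}$, with $L^\infty$, $C^{0,r}$ norms bounded by the corresponding $C^{\lambda-1,r}(K_{\tilde\eps})$ norms of $f$, $\curl f$, $\div f$. So everything hinges on the following claim for $\lambda=1$.

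\textbf{The one–step ball estimate.} \emph{There is $C>0$ depending only on $r$ such that for any $\eps\in(0,1)$, any $x_0$, and any vector field $g$ with $g\in L^\infty(B(x_0,\eps))$ and $\curl g,\div g\in C^{0,r}(B(x_0,\eps))$, one has, writing $M:=\|g\|_{L^\infty(B(x_0,\eps))}+\|\curl g\|_{C^{0,r}(B(x_0,\eps))}+\|\div g\|_{C^{0,r}(B(x_0,\eps))}$,}
\begin{equation*}
\max_{|\alpha|=1}\|\partial^\alpha g\|_{L^\infty(B(x_0,\eps/4))}\le C\eps^{-1}M,\qquad \max_{|\alpha|=1}[\partial^\alpha g]_{C^{0,r}(B(x_0,\eps/4))}\le C\eps^{-1-r}M.
\end{equation*}
The proof is the classical Helmholtz argument with a cut–off: pick $\chi\in C_c^\infty(B(x_0,2\eps/3))$, $\chi\equiv1$ on $B(x_0,\eps/2)$, $\|\nabla^j\chi\|_\infty\le C_j\eps^{-j}$ (so $[\chi]_{C^{0,r}}\le C\eps^{-r}$), set $w_1:=E*(\chi\curl g)$, $w_2:=E*(\chi\div g)$ with $E(x)=\frac1{2\pi}\log|x|$, and note $v:=g-\nabla^\perp w_1-\nabla w_2$ has vanishing curl and divergence on $B(x_0,\eps/2)$, hence harmonic components there. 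The Calderón–Zygmund/Newtonian–potential bounds give $\|\nabla^\perp w_1\|_{C^{1,r}}+\|\nabla w_2\|_{C^{1,r}}\le C\eps^{-r}(\|\curl g\|_{C^{0,r}}+\|\div g\|_{C^{0,r}})$ (the $\eps^{-r}$ coming only from $[\chi]_{C^{0,r}}$, all other cut–off losses being at most $O(1)$ since $\eps<1$), whence $\|v\|_{L^\infty(B(x_0,\eps/2))}\le CM$; interior estimates for harmonic functions then yield $\|\nabla v\|_{L^\infty(B(x_0,\eps/4))}\le C\eps^{-1}\|v\|_{L^\infty(B(x_0,\eps/2))}$ and $[\nabla v]_{C^{0,r}(B(x_0,\eps/4))}\le C\eps^{-1-r}\|v\|_{L^\infty(B(x_0,\eps/2))}$. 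Combining and using $\eps^{-r}\le\eps^{-1}\le\eps^{-1-r}$ gives the claim.

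\textbf{Reassembly and the main difficulty.} Applying the one–step estimate to $g=\partial^\beta f$, $|\beta|=\lambda-1$, on each ball $B(x_0,\eps)\subset K_{\tilde\eps}$ with $x_0\in K_{\eps+\tilde\eps}$, we get $\|\partial^\alpha f\|_{L^\infty(B(x_0,\eps/4))}\le C\eps^{-1}\Sigma$ and $[\partial^\alpha f]_{C^{0,r}(B(x_0,\eps/4))}\le C\eps^{-1-r}\Sigma$ for all $|\alpha|=\lambda$, where $\Sigma$ denotes the right–hand side of \eqref{int1} without the factor $c_i\eps^{-(1+r)}$. Covering $K_{\eps+\tilde\eps}$ by the balls $B(x_0,\eps/4)$: for $x\ne y$ in $K_{\eps+\tilde\eps}$ with $|x-y|\le\eps/4$, both lie in $B(x,\eps/2)$ and the Hölder quotient is controlled by $C\eps^{-1-r}\Sigma$; for $|x-y|>\eps/4$, we bound the numerator by $2\|\partial^\alpha f\|_{L^\infty}\le C\eps^{-1}\Sigma$ and divide by $|x-y|^r\ge(\eps/4)^r$, again obtaining $C\eps^{-1-r}\Sigma$. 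Adding the lower–order terms ($\le\|f\|_{C^{\lambda-1,r}(K_{\tilde\eps})}\le\Sigma$) yields \eqref{int1} with $c_i$ the resulting constant (depending only on $r$, up to the harmless polynomial–in–$\lambda$ factors counting the multi–indices, which are bounded since in the applications $\lambda\le\lambda_0+1$). The delicate point, and the only real obstacle, is precisely this $\eps$–bookkeeping: one must check that the elliptic estimate loses \emph{one} power of $\eps$ in $L^\infty$ of the top derivatives and \emph{one plus $r$} in the Hölder seminorm (and no worse, which forces using the full $C^{\lambda-1,r}$ norm — not merely $\|f\|_{L^\infty}$ — on the right of \eqref{int1}), as it is exactly the mismatch $\eps^{-1}$ vs.\ $\eps^{-1-r}$ that makes the far–apart–points case of the covering argument close with the claimed exponent rather than with $\eps^{-1-2r}$.
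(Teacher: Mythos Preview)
Your argument is correct, but it takes a longer route than the paper's. The paper proceeds in one stroke: it fixes a single cutoff $\phi\in C^\infty(\R^2)$ with $\phi\equiv 1$ on $K_{\eps+\tilde\eps}$, $\phi\equiv 0$ outside $K_{\tilde\eps}$, and $\|\phi\|_{C^{1,r}(\R^2)}\le C_2\,\eps^{-(1+r)}$, then applies the \emph{global} div--curl Schauder estimate on $\R^2$,
\[
\|v\|_{C^{1,r}(\R^2)}\le C_1\big(\|v\|_{C^{0,r}}+\|\curl v\|_{C^{0,r}}+\|\div v\|_{C^{0,r}}\big),
\]
to $v:=\phi\,\partial^\alpha f$ for each multi-index $|\alpha|=\lambda-1$. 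The factor $\eps^{-(1+r)}$ then drops out transparently from the product rule: $\curl v$ and $\div v$ contain the commutator terms $\nabla^\perp\phi\cdot\partial^\alpha f$ and $\nabla\phi\cdot\partial^\alpha f$, whose $C^{0,r}$ norms are bounded by $\|\phi\|_{C^{1,r}}\|f\|_{C^{\lambda-1,r}(K_{\tilde\eps})}$. No Helmholtz decomposition, no interior harmonic estimates, and no near/far covering are needed. Your approach --- localize to balls $B(x_0,\eps)$, split via Newtonian potentials plus a harmonic remainder, use interior estimates for the latter, then patch --- reaches the same exponent, and has the merit of making visible where each power of $\eps$ originates (one from the harmonic interior estimate, $r$ from the cutoff in the potential term). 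But the paper's version is considerably shorter and sidesteps precisely the ``delicate $\eps$-bookkeeping'' you flag at the end.
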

\begin{proof}
 Let us first recall that there exists $C_1 > 0$, which only depends on $r$, such that for any $v $ in $C^{0,r} (\R^2 )$ such that $\div v$ and $\curl  v$ are also  in $C^{0,r} (\R^2)$, then $v  \in C^{1 ,r} (\R^2)$ and 
 \begin{eqnarray}
  \label{int2}
  \| v \|_{  C^{1,r} ( \R^2 ) }  
   \leqslant  
C_1  (  \| v \|_{  C^{0,r} (\R^2 )  }  +  \| \curl  v \|_{  C^{0,r} (\R^2 )  } 
+  \| \div  v \|_{  C^{0,r} (\R^2 )  } ) .
\end{eqnarray}
 On the other hand there exists $C_2 > 0$, which only depends on $r$, such that  for any $\eps , \tilde{\eps} \in (0,1)$,
 there exists $\phi \in C^{\infty} (\R^2 )  $ such that $\phi |_{  K_{\tilde{\eps}}^c  } = 0$ and $\phi |_{  K_{\eps + \tilde{\eps} }  } = 1$ and $  \| \phi \|_{  C^{1 ,r} ( \R^2 ) }    \leqslant   C_2  \,  \eps^{-(1 +r)}$. Thus it is sufficient to apply \eqref{int2} to the function $v := \phi \, \partial^\alpha f$, for $| \alpha |= \lambda-1$ to conclude.
\end{proof}

\subsection{Shrinking the compact, from slightly larger}
 
Let $K$ be a compact subset of  $\Omega_0$.
Let $\underline{K}$ be a compact set such that  $K  \subset \dot{\underline{K}}  \subset \underline{K} \subset {\Omega_0}$. 
Then we  apply \eqref{decrea} with   $ \lambda  = \lambda_0 + 1$, $\eps :=  \text{ dist } (K, \underline{K}^c ) / k $, with $ \underline{K}$ instead of $K$, and using the inequality  \eqref{le2},   to obtain that  there exists $L>0$ such that for any integer $k  \in \N$, 
 \begin{eqnarray}
  \label{reastuce}
  \| D^k u \|_{ C^{\lambda_0 + 1 ,r} (K ) }  
   \leqslant   
  (k!)^{M+1+ (\lambda_0 + 1) (1 +r)}  L^k N_{\lambda_0 + 1 , r ,\underline{K}}^{k+1}  .
\end{eqnarray}

We then conclude as in the proof of  Theorem \ref{start22}.

 \section{Proof of Theorem  \ref{Dmodifie} }
 \label{sDmodifie}

Let $M>1$ and $T>0$.  There exists a compact $K'$ such that $K \subset  K'  \subset \Omega$ such that for any $t$ in $ \lbrack 0,T \rbrack$, the vorticity is constant outside of  $K'$. 
There exists $\chi:  \Omega  \rightarrow \lbrack 0,1 \rbrack$ Gevrey of order $M$ which vanishes in a neighborhood of the boundary  $\partial \Omega$ and which is equal to one on $K'$. 
We consider the vector field
\begin{equation*}
D_\chi := \partial_{t} + \chi u.\nabla .
\end{equation*}
The idea is then to proceed as in section \ref{proof} estimating recursively the $D_\chi^k u$, for $k$ in $\N^*$, instead of the $D^k u$.
The motivation for introducing the cut-off $\chi$ is that the  identity \eqref{P4fNew} becomes: on the boundary $ \partial \Omega$, for $k$ in $\N^*$, 
$\hat{n} \cdot D_\chi^k u = 0 $.
Moreover the circulations  $ \Gamma_i ( D_\chi^k u)$, for $1\leq i \leq d$ and $k \geq 1$,  vanish.
The $\div D_\chi^k u $ and the  $\curl  D_\chi^k u $ can be estimated thanks to the $D_\chi^j u$, with $j < k$, and with some extra factors involving $\chi$ and its derivatives, by using the following identities:
\begin{eqnarray}
	D_\chi (\psi_1\psi_2)=(D_\chi \psi_1)\psi_2+\psi_1(D_\chi \psi_2) \label{t3.0},\\
	\nabla (D_\chi \psi) -D_\chi (\nabla \psi) = (\nabla (\chi u)) \cdot (\nabla \psi), \label{t3.1}\\
	\div D_\chi \psi - D_\chi \div \psi = \trace \left\{(\nabla  (\chi u))\cdot(\nabla \psi) \right\}, \label{t3.2}\\
	\curl D_\chi \psi - D_\chi \curl \psi = \as \left\{(\nabla  (\chi u))\cdot(\nabla \psi) \right\}. \label{t3.3}
\end{eqnarray}
The assumption that the vorticity is constant near the boundary is useful to tackle the $\curl D_{\chi}^k u$.
Let us stress in particular that 
\begin{eqnarray*}
\curl D_\chi u & =&  D_\chi \curl u + \as \left\{(\nabla  (\chi u))\cdot(\nabla u) \right\} ,
\\ 	 & =&  D \curl u + \as \left\{(\nabla  (\chi u))\cdot(\nabla u) \right\} ,
\\  & =&  \as \left\{(\nabla  (\chi u))\cdot(\nabla u) \right\} .
\end{eqnarray*}
The proof of Theorem  \ref{Dmodifie} then goes as in the  proof of Theorem  \ref{start22}. 
The details are left to the reader.

\section*{APPENDIX A}

 Let $\mathcal{D}$ be an open and bounded subset of $\mathbb{R}^2$. 
Let us denote by $\Gamma (x) := \frac{1}{2\pi} \log   \| x \|$ the fundamental solution  of the Poisson problem in $\mathbb{R}^2$, and for a given function $f \in L^{\infty}  (\mathcal{D} )$ by 
$$ \Psi (x):= (\Gamma * f ) (x) = \int_{\mathcal{D}} \Gamma (x-y) f(y) dy $$
the Newton potential of $f$. 
It is well-known (cf. for instance \cite{GT}, Lemma $4.1$) that $ \Psi \in C^{1}  (\R^2 )$ with 
$$ \nabla  \Psi (x) = \int_{\mathcal{D}} ( \nabla \Gamma) (x-y) f(y) dy .$$
It is well-known (cf. for example \cite{GT}, Exercice $4.1$) that in general $f \in C^0 (\mathcal{D})$ does not imply that $ \Psi  \in C^2 (\mathcal{D})$. However we have the following.
\begin{Lemma}
\label{SchauderDiniPo}
If  $f  \in C_{\mu,\text{loc}}  (\mathcal{D})$  with $\mu$ a Dini modulus of continuity, then 
$ \Psi  \in C^2 (\mathcal{D})$ and $ \Delta  \Psi  = f$ in $\mathcal{D}$.
\end{Lemma}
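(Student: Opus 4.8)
\textbf{Proof plan for Lemma \ref{SchauderDiniPo}.}

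The plan is to show that the candidate for the second derivatives of $\Psi$, namely the formally differentiated singular integral, actually exists as a continuous function on $\mathcal{D}$ and that the Laplacian of the resulting $C^2$ function equals $f$. First I would fix a ball $B$ with $\overline{B} \subset \mathcal{D}$ on which $f \in C_\mu(\overline{B})$ (possible since $f \in C_{\mu,\loc}(\mathcal{D})$), and split $f = f_1 + f_2$ where $f_1 := \chi f$ for a cutoff $\chi \in C_c^\infty(\mathcal{D})$ equal to $1$ near $\overline{B}$, and $f_2 := (1-\chi)f$. The contribution $\Gamma * f_2$ is harmonic in a neighborhood of $\overline{B}$ (its source is supported away from $B$), hence smooth there and contributes $0$ to the Laplacian, so it may be discarded. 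Thus it suffices to treat a compactly supported density which is globally $C_\mu(\R^2)$ with a Dini modulus; I would henceforth rename it $f$.

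The main step is then the classical argument (as in \cite{GT}, Lemma $4.2$, but with the Hölder modulus $h^r$ replaced by the Dini modulus $\mu$): for $x$ in the interior of the support region define
\begin{equation*}
w_{ij}(x) := \int_{\mathcal{D}} \partial_{ij}\Gamma(x-y)\bigl(f(y) - f(x)\bigr)\,dy \ + \ f(x)\int_{\partial \mathcal{D}} \partial_i\Gamma(x-y)\,\nu_j(y)\,dS(y),
\end{equation*}
and show: (a) the first integral converges absolutely near the diagonal, because $|\partial_{ij}\Gamma(x-y)| \lesssim \|x-y\|^{-2}$ while $|f(y)-f(x)| \le \|f\|_{C_\mu}\,\mu(\|x-y\|)$, so the integrand is $O(\mu(\rho)\rho^{-2})$ and in polar coordinates the radial integral is $\int_0 \mu(\rho)\rho^{-1}\,d\rho < \infty$ by the Dini condition; (b) $w_{ij}$ is continuous, by an $\eps/3$ argument splitting the integral into a small ball around $x$ (small by the Dini bound, uniformly) and the complement (on which the integrand is continuous in $x$); and (c) $w_{ij} = \partial_{ij}\Psi$, by the standard regularization: differentiate $\nabla\Psi$ under a mollified kernel $\partial_i\Gamma_\eps$, integrate by parts to move one derivative onto the difference quotient, and pass to the limit using (a)–(b). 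This gives $\Psi \in C^2$ locally; taking the trace $\sum_i w_{ii}$ and using $\sum_i \partial_{ii}\Gamma = 0$ away from the origin together with the Gauss divergence computation $\int_{\partial \mathcal{D}} \nabla\Gamma(x-y)\cdot\nu(y)\,dS(y) = 1$ yields $\Delta\Psi = f$ in $\mathcal{D}$.

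The main obstacle is purely that the Hölder exponent is replaced by a general Dini modulus: one must be careful that every estimate that in the Schauder proof uses $\mu(\rho) = \rho^r$ only really uses the two facts $\mu$ increasing and $\int_0 \mu(\rho)\rho^{-1}\,d\rho < \infty$, and in particular that the "uniform smallness of the tail integral near the diagonal" needed for the continuity statement (b) is exactly the Dini integrability, applied uniformly in $x$ over a compact subset where $\|f\|_{C_\mu}$ is controlled. Once this bookkeeping is done the argument is the textbook one, so I would present it compactly, emphasizing only the two places where Dini continuity (rather than Hölder continuity) is invoked.
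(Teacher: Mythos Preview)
Your proposal is correct and follows essentially the same classical argument as the paper's proof (the Gilbarg--Trudinger Lemma~4.2 scheme, with the H\"older modulus replaced by a Dini modulus): define the candidate $\partial_{ij}\Psi$ as the difference-regularized singular integral plus a boundary term, use the Dini condition to get absolute convergence near the diagonal, and pass to the limit through a regularized kernel. The only cosmetic differences are that the paper introduces an auxiliary smooth domain $\mathcal{D}_0 \supset \mathcal{D}$ and extends $f$ by zero (rather than your cutoff reduction to compactly supported $f$), and that the paper's boundary term carries the opposite sign --- you should double-check your sign convention against $\partial_{x_i}\Gamma(x-y) = -\partial_{y_i}\Gamma(x-y)$ when applying the divergence theorem.
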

\begin{proof}
Let  $ \mathcal{D}_0 \supset \mathcal{D}$ be a bounded open set with smooth boundary
$\partial \mathcal{D}_0$, and for $1 \leqslant i,j \leqslant 2$, let 
$$ u_{ij}  (x) :=  \int_{\mathcal{D}_0} ( \partial_{ij} \Gamma ) (x-y) (f(y) - f(x))dy
- f(x)  \int_{\partial \mathcal{D}_0} (\partial_{i}  \Gamma ) (x-y) \nu_j (y) ds(y),
$$
where $\nu$ is the outward normal unit to $\partial \mathcal{D}_0$, and where $f$ is extended by zero outside $\mathcal{D}$. 
The function  $ u_{ij}$  is well-defined for $x \in \mathcal{D}$: the integrands are smooth except when $y$ is in a neighborhood of $x$ in the first integral, say in an open ball $B(x,R) $ such that $\overline{B(x,R)}  \subset \mathcal{D}$. Since $\Gamma$ satisfies the bound 
$|\partial_{ij} \Gamma (x)  | \leq \frac{1}{2\pi \| x\|^2  }$ and 
 since  $f  \in C_{\mu,\text{loc}}  (\mathcal{D})$, the contribution of the ball $B(x,R) $ to the  integral is $\int_0^{R }  \frac{\mu(r)}{ r}  dr < + \infty $, since $\mu$ is a Dini modulus of continuity.

Let $\eta  \in C^{\infty}  (\lbrack 0,\infty) )$ satisfying $\eta (s) = 0$ for $0 \leqslant s \leqslant 1$, $0 \leqslant \eta' (s) \leqslant  2$ for  $1 \leqslant s \leqslant 2$ and $\eta (s) = 1$ for $s \geqslant 2$.
For $i=1,2$, the functions 
$$ v_{i,\eps} (x) :=  \int_{\mathcal{D}} \Gamma_{i,\eps} (x-y) f(y) dy,
\text{ where }  \Gamma_{i,\eps} (x) :=  (\partial_{i}  \Gamma ) (x) \eta ( \| x \| / \eps ), $$
converges uniformly on the compact subsets  of  $\mathcal{D}$ to $\partial_{i} \Psi$ when $\eps >0$ tends to $0$, since for any $x \in \mathcal{D}$ and for $2 \eps  \in  (0,d(x,\partial \mathcal{D} ))$,
$$  v_{i,\eps} (x) -  \partial_{i} \Psi (x)
=  \int_{ B(x,2 \eps)} ( \partial_{i} \Gamma ) (x-y) ( \eta ( \| x-y \| / \eps ) - 1)  f(y)  dy $$
hence
$$ | v_{i,\eps} (x) -  \partial_{i} \Psi (x) | \leqslant 
 \int_{ B(x,2 \eps)}  \frac{1}{2 \pi   \| x-y \| } 2  |f(y)|  dy  
 \leqslant 2 \eps   \| f \|_{L^{\infty}  (\mathcal{D} )} .$$
 %
Now, for any $x  \in \mathcal{D}$,
$$\partial_{j} v_{i,\eps} (x) =   \int_{\mathcal{D}_0}  (\partial_{j}  \Gamma_{i,\eps}) (x-y)  (f(y) - f(x)) dy
+ f(x)  \int_{ \mathcal{D}_0} (\partial_{j}  \Gamma_{i,\eps} ) (x-y)  dy .
$$
Moreover, thanks to Green's identity, we have 
 \begin{eqnarray*}
  \int_{ \mathcal{D}_0} (\partial_{j}  \Gamma_{i,\eps} ) (x-y)  dy = 
-  \int_{ \partial \mathcal{D}_0}   \Gamma_{i,\eps}  (x-y)  \nu_j (y) ds(y) 
\\ = -  \int_{ \partial \mathcal{D}_0} (\partial_{i}  \Gamma ) (x-y)  \nu_j (y) ds(y)  
 \end{eqnarray*}
for $2\eps  \in (0, d(x,\partial \mathcal{D} ))$. 
Therefore
$$\partial_{j} v_{i,\eps} (x) =   \int_{\mathcal{D}_0}  (\partial_{j}  \Gamma_{i,\eps}) (x-y)  (f(y) - f(x)) dy
- f(x)  \int_{\partial \mathcal{D}_0} (\partial_{i}  \Gamma ) (x-y) \nu_j (y) ds(y) .
$$
Then
$$ u_{ij}  (x) - \partial_{j} v_{i,\eps} (x) =  \int_{ B(x,2 \eps)}   \tilde{\Gamma}_{ij,\eps} (x-y)   (f(y) - f(x)) dy ,$$
with
$$\tilde{\Gamma}_{ij,\eps} (x) := \partial_{ij}  \Gamma (x) (1-   \eta ( \| x \| / \eps ))   -   \partial_{i}  \Gamma (x) \eta' ( \| x \| / \eps )  \frac{x_j }{\eps  \| x \| } .$$
Since $| \tilde{\Gamma}_{ij,\eps} (x) | \leqslant    \frac{1}{ \pi } ( \frac{1}{ \| x \|^2  } +  \frac{1}{\eps \| x \|  } )$, we obtain
$$| u_{ij}  (x) - \partial_{j} v_{i,\eps} (x) | \leqslant  6 \int_0^{2 \eps }  \frac{\mu(r)}{ r}  dr $$
which tends to $0$, since $\mu$ is a Dini modulus of continuity.
We therefore have shown that $\partial_{j} v_{i,\eps}$ converges to $ u_{ij}$ when $\eps$ tends to $0$ uniformly on the compact subsets  of  $\mathcal{D}$.
Therefore $ \Psi  \in C^2 (\mathcal{D})$ and $ \partial_{ij} \Psi =  u_{ij} $. 
It is then sufficient to use that  $ \Delta  \Gamma =  \delta_0$ and Green's identity to get  $ \Delta  \Psi  = f$ in $\mathcal{D}$.
\end{proof}

\section*{APPENDIX B}

The goal of this appendix is to provide an explicit proof of Theorem \ref{lin}.
In particular we will show how the motions of isolated point vortices can be considered as  weak solutions of the Euler equations, thanks to an appropriated  weak vorticity formulation of the Euler equations for multiply connected domains. 
Since the trajectories of the point vortices are analytic (up to the first collision) this will provide some examples of very singular solutions of the Euler equations for which the flow restricted to the finite collection of the initial positions of the vortices is analytic, despite the boundary of the domain is not analytic. We  only assume here that  the boundary  $\partial \Omega$  of  the domain $\Omega$ is $C^2$.
The study of the motion of  isolated vortices  goes back to Helmholtz, Kirchoff,  Routh, and to Lin  \cite{lin} in the case of multiply connected domains that will be considered here. 
Let us first recall the existence of  the hydrodynamic Green function.
\begin{Lemma}
\label{green}
There exists an unique function $G:(x,y) \in   \overline{ \Omega} \times  \Omega \mapsto G(x,y) \in \R$ satisfying the following properties:
\begin{enumerate}[(i).]
\item The function 
\begin{eqnarray}
\label{robin}
g (x,y) := G (x,y) - \frac{1}{2 \pi }  \log \|x-y \| 
\end{eqnarray}
 is harmonic with respect to $x$ on $ \Omega $, for any $y \in \Omega $.
\item For  $1\leq l \leq d$, for  $y \in \Omega$, 
the function $G( \cdot ,y)$ is constant when $x$ ranges over  $ C_l$.
\item[(iii).]  The function $G$ vanishes over the outer boundary $C_0$: 
 for  $x \in C_0$, for $y  \in  \Omega$,  $G(x,y) = 0$.
\item[(iv).] For  $1\leq l \leq d$, for  $y \in \Omega$,  the circulation around $C_l$ of $ \nabla^\perp G ( \cdot ,y)$ vanishes: $  \Gamma_l ( \nabla^\perp G ( \cdot ,y)) = 0$.

\end{enumerate}
Moreover $G$ satisfies the reciprocity-symmetry relation: for any $(x,y) \in  { \Omega} \times  \Omega $, 
\begin{eqnarray}
\label{sy}
G (x,y ) = G (y,x) .
\end{eqnarray}
\end{Lemma}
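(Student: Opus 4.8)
The plan is to obtain $G$ by correcting the classical Dirichlet Green function with a linear combination of the harmonic functions $\phi_l$ introduced just before Lemma \ref{charlot}, the free coefficients being tuned so that the circulation conditions (iv) hold; uniqueness and the symmetry \eqref{sy} will then both follow from the non-degeneracy of the $d\times d$ period matrix $P_{lm} := \Gamma_l(\nabla^\perp\phi_m)$.

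\textbf{Construction.} First I would take $G_0:\overline\Omega\times\Omega\to\R$ to be the Green function of the Dirichlet Laplacian in the sign convention of \eqref{robin}, i.e. $\Delta_x G_0(\cdot,y) = \delta_y$ in $\Omega$ and $G_0(\cdot,y) = 0$ on $\partial\Omega$; by classical potential theory ($\partial\Omega \in C^2$ is far more than enough) $G_0$ exists, $x\mapsto G_0(x,y) - \tfrac{1}{2\pi}\log\|x-y\|$ is harmonic, and $G_0(\cdot,y)$ is smooth up to $\partial\Omega$ away from $y$. I would then look for $G$ in the form
\[
G(x,y) = G_0(x,y) + \sum_{l=1}^d a_l(y)\,\phi_l(x).
\]
Because each $\phi_l$ is harmonic, (i) holds; because $G_0 = 0$ on $\partial\Omega$ and $\phi_l = 0$ on $C_0$, (iii) holds; and because $\phi_l = \delta_{lj}$ on $C_j$ for $1\le j\le d$, one gets $G(x,y) = a_j(y)$ for $x\in C_j$, which is (ii). Imposing (iv) is then a linear system: with $b_l(y) := \Gamma_l(\nabla^\perp G_0(\cdot,y))$ (a smooth function of $y$) it reads $P\,a(y) = -b(y)$.

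\textbf{Non-degeneracy of $P$ (the main point).} Along $C_l$ one has $\nabla^\perp\phi_m\cdot d\ell = \pm\,\partial_n\phi_m\,ds$, and $\phi_m$ vanishes on every boundary component but $C_m$, so Green's identity gives $P_{lm} = \pm\int_{\partial\Omega}\phi_l\,\partial_n\phi_m\,ds = \pm\int_\Omega\nabla\phi_l\cdot\nabla\phi_m\,dx$; thus, up to an overall sign, $P$ is the Gram matrix of $\{\nabla\phi_1,\dots,\nabla\phi_d\}$ in $L^2(\Omega)$. These gradients are independent — a relation $\sum_l c_l\nabla\phi_l = 0$ forces $\sum_l c_l\phi_l$ to be constant, hence (evaluating on $C_0$ then on each $C_l$) all $c_l = 0$ — so $P$ is invertible (one could also read this off from \eqref{charlot4}). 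Hence $a(y) := -P^{-1}b(y)$ is forced and smooth in $y$, and the resulting $G$ satisfies (i)--(iv). For uniqueness, if $G,\widetilde G$ both work then $h := G(\cdot,y) - \widetilde G(\cdot,y)$ is harmonic (the logarithmic singularities cancel), vanishes on $C_0$, is a constant $c_l(y)$ on each $C_l$, and has $\Gamma_l(\nabla^\perp h) = 0$; the unique harmonic function with these boundary values is $\sum_m c_m(y)\phi_m$, so $P\,c(y) = 0$ and $h\equiv 0$.

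\textbf{Symmetry.} Finally I would prove \eqref{sy} by Green's second identity applied to $G(\cdot,x_1)$ and $G(\cdot,x_2)$, both harmonic, on $\Omega$ with two small disjoint disks around $x_1,x_2$ removed. On $C_0$ both functions vanish; on each inner $C_l$ both are constant and $\int_{C_l}\partial_n G(\cdot,x_j)\,ds = \pm\,\Gamma_l(\nabla^\perp G(\cdot,x_j)) = 0$ by (iv) — so the whole boundary contribution on $\partial\Omega$ drops. The standard small-circle computation, using $G(\cdot,x_j) = \tfrac{1}{2\pi}\log\|\cdot-x_j\| + (\text{smooth near }x_j)$, then gives $G(x_1,x_2) = G(x_2,x_1)$ in the limit. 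The only delicate bookkeeping throughout is the orientation-dependent sign relating $\Gamma_l(\nabla^\perp\cdot)$ to the flux $\int_{C_l}\partial_n\cdot\,ds$, but every such sign either cancels or is absorbed into $P$; the genuine content is the invertibility of $P$, i.e. the mutual compatibility of the over-determined conditions (ii)--(iv).
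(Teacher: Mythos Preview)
Your construction and the paper's are essentially identical: both correct the Dirichlet Green function $G_0$ by a combination $\sum a_l(y)\phi_l(x)$ and both establish invertibility of the period matrix $P_{lm}=\Gamma_l(\nabla^\perp\phi_m)$ by recognizing it (up to sign) as the Gram matrix of the $\nabla\phi_l$. The one substantive difference is in how the symmetry \eqref{sy} is obtained. The paper goes one step further than you do and computes $b_l(y)=\Gamma_l(\nabla^\perp G_0(\cdot,y))=-\phi_l(y)$ via the same Green identity used for the Gram-matrix computation; this gives $a_l(y)=\sum_j p_{l,j}\phi_j(y)$ explicitly, so that $G(x,y)=G_0(x,y)+\sum_{i,j}p_{i,j}\phi_i(x)\phi_j(y)$ and symmetry is immediate from $G_0(x,y)=G_0(y,x)$ together with the symmetry of $P^{-1}$. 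Your route---Green's second identity on the doubly punctured domain, with the $\partial\Omega$ contribution killed by (ii)--(iv)---is the textbook argument and perfectly correct, but it is longer and requires the sign bookkeeping you flag. Similarly, for uniqueness the paper short-circuits your $Pc(y)=0$ step by observing directly that $\int_\Omega\|\nabla(G_1-G_2)\|^2=0$ (again from Green's identity and (ii)--(iv)). In short: same idea, same key lemma; the paper just cashes in the explicit form of $b(y)$ to get symmetry and uniqueness for free.
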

\begin{proof}
We will use again
 for $1\leq i \leq d$ the function $ \phi_i $ in $C^{\infty}(\Omega)$ such that 
$\Delta  \phi_i = 0$ in $\Omega$,  with $\phi_i = \delta_{i,j}$, on $C_j$, for $j = 0, \ldots, d$. 
We introduce the matrix $M := (m_{i,j})_{1\leqslant  i,j \leqslant  d}$ with $m_{i,j} :=  \Gamma_i ( \nabla^\perp \phi_j )$.
Let us also recall that
Green's identity yields for  any smooth  vector field  $f$  from $\Omega$ to $ \R^2$, 
\begin{equation}
\label{charlot33}
\int_\Omega   \phi_i \curl f  +  \Gamma_i (f)  =
 -  \int_\Omega   \nabla^\perp \phi_i \cdot f .
 \end{equation}
This yields in particular $m_{i,j} = - \int_\Omega   \nabla^\perp \phi_i \cdot  \nabla^\perp \phi_j $. Therefore the matrix $M := (m_{i,j})_{1\leqslant  i,j \leqslant  d}$ is symmetric definite negative. Let us denote $p_{i,j}$ the entries of its inverse $M^{-1}$. 
Let us denote by $G_0 (x,y) $ the Green's function associated to the Dirichlet condition.
We then set 
$$G(x,y) := G_0 (x,y) + \sum_{1 \leq i,j  \leq d} p_{i,j}  \phi_i  (x)  \phi_j  (y).$$
The conditions $(i)$,  $(ii)$, $(iii)$ and  the reciprocity-symmetry relation \eqref{sy} are therefore satisfied.
Now \eqref{charlot33} also applies to $f:= \nabla^\perp  G_0 $ and we get  for  $1\leq l \leq d$, for  $y \in \Omega$, 
$$ \Gamma_l ( \nabla^\perp G_0 (\cdot,y )) = - \phi_l  (y) , \text{ and }
 \Gamma_l (\sum_{1 \leq i,j  \leq d} p_{i,j}  \phi_i  (\cdot )  \phi_j  (y)) =
\sum_{1 \leq i,j  \leq d} p_{i,j} m_{l,i}   \phi_j  (y) =  \phi_l  (y) ,
$$
so that
$$  \Gamma_l ( \nabla^\perp G (\cdot,y )) = 0 .$$
The  condition $(iv)$ is therefore satisfied.
Let us now prove the uniqueness: assume that two functions $G_1$ and $G_2$ satisfy the properties  $(i)$,..., $(iv)$ then  for any $y \in \Omega $, Green's identity yields  that
 $ \int_\Omega  \| \nabla (G_1 - G_2 )   \|^2 = 0$, so that $G_1 = G_2$ since they both vanish on $C_0$.
 \end{proof}

We now consider $N$ vortices of respective strength $\alpha_i \in \R^*$, for $1 \leqslant  i \leqslant  N$, located at $N$ distinct points of $\Omega $ and we prescribe some real $\overline{\Gamma}_i$ as respective circulations on the inner  boundaries  $C_i$, for $1 \leqslant  i \leqslant  d$.
We deduce from Lemma \ref{green} that there exists only one corresponding stream function.
\begin{Lemma}
\label{courant}
Let be given 
$$N   \in \N^*,\  \overline{\Gamma} :=  (\overline{\Gamma}_l )_{1\leqslant  l \leqslant  d}  \in \R^d , \    \alpha := (\alpha_l )_{1\leqslant  l \leqslant  N} \in \R^N$$
 and 
 $$\overline{x} := (x_1 ,... ,x_N)   \in   \Omega_N :=  \{ (x_1  ,... ,x_N)  \in \Omega / \quad x_i \neq x_j  \text{ for } 1 \leqslant  i  \neq j \leqslant  N  \}.$$
Then there exists a unique function $\psi :\Omega \mapsto \R $ such that 
\begin{enumerate}[(i).]
\item The function 
\begin{eqnarray*}
\psi  - \sum_{i=1}^N 
 \frac{\alpha_i }{2 \pi }  \log \| \cdot - x_i \| 
\end{eqnarray*}
 is harmonic in $ \Omega $.
\item For  $1\leq l \leq d$, the function $\psi $ is constant when $x$ ranges over  $ C_l$.
\item[(iii).]  The function $\psi $ vanishes over the outer boundary $C_0$: 
 for  $x \in C_0$,  $\psi (x)  = 0$.
\item[(iv).] For  $1\leq l \leq d$,   the circulation around $C_l$ of $ \nabla^\perp \psi  $ is $  {\Gamma}_l ( \nabla^\perp  \psi ) = \overline{\Gamma}_l  $.
\end{enumerate}
Moreover 
\begin{eqnarray}
\label{psi0}
\psi := \sum_{ l =1}^N \alpha_l G ( \cdot , x_l ) + \psi_0 \text{ with }
 \psi_0 := \sum_{ 1 \leq i,j  \leq d } \overline{\Gamma}_i p_{  i,j } \phi_j .
  \end{eqnarray}
\end{Lemma}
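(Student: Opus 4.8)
The plan is to establish existence by checking directly that the function defined in \eqref{psi0} satisfies the four requirements, each of them reducing to the corresponding property of the hydrodynamic Green function $G$ from Lemma \ref{green} together with the defining properties of the harmonic functions $\phi_j$, and then to establish uniqueness by an energy argument of the same kind as in the proof of Lemma \ref{green}. So I would take $\psi := \sum_{l=1}^N \alpha_l\, G(\cdot,x_l) + \psi_0$ with $\psi_0 := \sum_{1\le i,j\le d}\overline{\Gamma}_i\,p_{i,j}\,\phi_j$, a well-defined function on $\Omega$, smooth away from the points $x_1,\dots,x_N$. Property $(i)$ holds because $\psi - \sum_l \tfrac{\alpha_l}{2\pi}\log\|\cdot-x_l\| = \sum_l \alpha_l\, g(\cdot,x_l) + \psi_0$ is harmonic in $\Omega$: each $g(\cdot,x_l)$ is harmonic by property $(i)$ of Lemma \ref{green} (recall \eqref{robin}) and $\psi_0$ is a linear combination of the harmonic functions $\phi_j$. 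Property $(ii)$ holds because each $G(\cdot,x_l)$ is constant on every $C_l$ (property $(ii)$ of Lemma \ref{green}) and each $\phi_j$ equals the constant $\delta_{j,l}$ on $C_l$. Property $(iii)$ holds because each $G(\cdot,x_l)$ vanishes on $C_0$ (property $(iii)$ of Lemma \ref{green}) and each $\phi_j$, $j\ge 1$, vanishes on $C_0$.

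The only point requiring a short computation is $(iv)$. Using $\Gamma_l(\nabla^\perp G(\cdot,x_m)) = 0$ for all $l,m$ (property $(iv)$ of Lemma \ref{green}) and the definition $m_{l,j} := \Gamma_l(\nabla^\perp\phi_j)$, one gets
\bes
\Gamma_l(\nabla^\perp\psi) = \Gamma_l(\nabla^\perp\psi_0) = \sum_{1\le i,j\le d} \overline{\Gamma}_i\, p_{i,j}\, m_{l,j}.
\ees
Since $M=(m_{i,j})$ is symmetric and $(p_{i,j}) = M^{-1}$, we have $\sum_j p_{i,j}\, m_{l,j} = \sum_j p_{i,j}\, m_{j,l} = (M^{-1}M)_{i,l} = \delta_{i,l}$, whence $\Gamma_l(\nabla^\perp\psi) = \overline{\Gamma}_l$, as desired.

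For uniqueness, I would take two functions $\psi_1,\psi_2$ satisfying $(i)$--$(iv)$ and set $w := \psi_1-\psi_2$; the logarithmic singularities cancel, so $w$ is harmonic in $\Omega$, vanishes on $C_0$, equals some constant $c_l$ on each $C_l$, and satisfies $\Gamma_l(\nabla^\perp w)=0$. Green's identity then gives $\int_\Omega \|\nabla w\|^2 = \int_{\partial\Omega} w\,\partial_n w$; the integral over $C_0$ vanishes since $w=0$ there, and the integral over each $C_l$ equals $c_l\int_{C_l}\partial_n w$, which vanishes because $\int_{C_l}\partial_n w$ coincides, up to the orientation sign, with the circulation $\Gamma_l(\nabla^\perp w)=0$. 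Hence $\nabla w\equiv 0$ and $w\equiv 0$ since $w=0$ on $C_0$. I do not expect a serious obstacle anywhere; the only delicate bookkeeping is the orientation conventions on the curves $C_l$ used to identify the flux $\int_{C_l}\partial_n w$ with a circulation of $\nabla^\perp w$, together with the observation that the inner-boundary values $c_l$ need not vanish, which is exactly why condition $(iv)$ is indispensable for uniqueness.
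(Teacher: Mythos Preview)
Your proof is correct and is exactly the deduction the paper intends: the paper does not spell out a proof of Lemma~\ref{courant} but simply says it is deduced from Lemma~\ref{green}, and your verification of (i)--(iv) from the properties of $G$ and of the $\phi_j$, together with the energy/Green's-identity argument for uniqueness (the same one used in the proof of Lemma~\ref{green}), is precisely that deduction. The computation for (iv) via the symmetry of $M$ and $(p_{i,j})=M^{-1}$ is the right one, and your care about the orientation convention linking $\int_{C_l}\partial_n w$ to $\Gamma_l(\nabla^\perp w)$ is appropriate.
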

The total kinetic energy $ \int_\Omega  \| \nabla \psi \|^2  $ of the flow is infinite (except if all the $\alpha_l$ vanish) so that no information can be derived from its conservation. Nevertheless there exists the following substitute.

%
\begin{Definition}
\label{W}
We  define the  Kirchoff-Routh-Lin function (for  $\overline{x} := (x_1 ,... ,x_N)   \in   \Omega_N$)
 by
\begin{eqnarray*}
 W ( \overline{x} ) :=  \sum_{1 \leqslant l  \leqslant N}  \alpha_l   \psi_0 (x_l)
  +  \frac{1}{2}  \sum_{1 \leqslant l  \leqslant N} \alpha_l^2 r (x_l )
 + \frac{1}{2}  \sum_{1 \leqslant l \neq m \leqslant N} \alpha_l \alpha_m G (x_l , x_m ) 
  ,
 \end{eqnarray*}
where the function $r$ is  the restriction  on its diagonal of the function $g$ appearing in  \eqref{robin}, that is the function $r$ defined on $ \Omega$ by 
$r(x) := g(x,x)$.
 The function $r$ is referred as the hydrodynamic Robin function.
\end{Definition}
Indeed  the  Kirchoff-Routh-Lin function $W $ is a renormalized energy of the system, excluding the free part (that is the one which should take place in the absence of boundaries) of the self-interaction of each vortex. The first term in the definition of $W$ corresponds to the energy created by the interaction with vortices outside $\Omega$
corresponding to the circulations on the $C_l$, the second term correspond to the part of the self-interaction of each vortex induced by the presence of boundaries (by symmetry breaking) and the third one corresponds to the interaction between any distinct pair of vortices. 

\begin{Definition}[Lin \cite{lin}]
\label{def?}
The trajectories $z(t) :=(z_1 (t),...,z_N (t))$  of $N$ point vortices of respective strength $\alpha_i \in \R^*$, for $1 \leqslant  i \leqslant  N$, located at initial time at the  $(x_1  ,... ,x_N)  \in \Omega_N $ is given by the following Hamiltonian ODE
\begin{eqnarray}
\label{motion1}
 \frac{d}{dt} z(t) = F (z(t)),
\\ \label{motion2} z(0) = (x_1 ,... ,x_N) ,
\end{eqnarray}
where $F :z:=(z_1 ,... ,z_N) \in \Omega_N \mapsto F (z) := (F_1 (z),... ,F_N (z))$, with 
 $F_i (z) :=  \frac{1}{ \alpha_i} \nabla^\perp_{x_i} W (z)$. 
  \end{Definition}
 Observing that  the vector field $F$ is  analytic  on  $\Omega_N$ we have the following.
\begin{Lemma}
There exists $T>0$  and a unique solution $z(t) $ in $C^\omega (\lbrack 0,T \rbrack)$  of \eqref{motion1}-\eqref{motion2}.
\end{Lemma}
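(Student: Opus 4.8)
The statement to prove is the last Lemma: there exists $T > 0$ and a unique solution $z(t) \in C^\omega(\lbrack 0, T \rbrack)$ of the Hamiltonian system \eqref{motion1}--\eqref{motion2}, given that the vector field $F$ is analytic on $\Omega_N$. The plan is to invoke the classical Cauchy--Kovalevskaya / Cauchy existence theorem for ODEs with analytic right-hand side. First I would observe that the initial point $(x_1, \dots, x_N)$ lies in the open set $\Omega_N \subset \Omega^N$, and that $F$, being analytic on $\Omega_N$, is in particular locally Lipschitz there; hence by the Picard--Lindel\"of theorem there is a unique maximal $C^1$ solution $z(t)$ defined on some interval $\lbrack 0, T \rbrack$ with $z(t) \in \Omega_N$ for $t$ small. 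Uniqueness in the $C^1$ (or even merely Lipschitz) class is therefore immediate, and any $C^\omega$ solution must coincide with this one.

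The second step is to upgrade the regularity of this solution from $C^1$ to $C^\omega$. This is the standard analytic bootstrap: since $z$ is $C^1$ and $F$ is analytic, the relation $\dot z(t) = F(z(t))$ shows $\dot z$ is $C^1$, hence $z \in C^2$; iterating, $z \in C^\infty$. To get genuine analyticity one uses the majorant method: the Taylor coefficients of a putative solution at $t = 0$ are determined recursively by differentiating $\dot z = F(z)$, and one bounds them by comparison with the (explicitly solvable) scalar majorant equation built from a majorant of $F$ near $z(0)$, which yields a positive radius of convergence. This is exactly the Cauchy--Kovalevskaya argument in one independent variable; alternatively one may simply cite the classical theorem that the flow of an analytic vector field is analytic in time (and in the initial condition). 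I would phrase it as: \emph{Since $F$ is analytic on $\Omega_N$, the Cauchy problem \eqref{motion1}--\eqref{motion2} admits, by the Cauchy--Kovalevskaya theorem for ordinary differential equations, a unique analytic solution on a maximal interval of existence, which is nontrivial because $(x_1, \dots, x_N) \in \Omega_N$ and $\Omega_N$ is open.}

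The only genuine point requiring care — and the place I would spend a sentence — is that the solution stays in $\Omega_N$ long enough for the statement to make sense, i.e.\ the vortices neither collide with each other nor reach the boundary $\partial\Omega$ instantaneously. Since $\Omega_N$ is open and $z(0) \in \Omega_N$, continuity of $z$ guarantees $z(t) \in \Omega_N$ for all sufficiently small $t > 0$; this is precisely what "up to the first collision" means in the surrounding discussion, and it fixes the constant $T > 0$ in the statement. I do not expect any real obstacle here: the analyticity of $F$ on $\Omega_N$ has already been established (it follows from Lemma \ref{green}, the explicit formula for $G$, the smoothness of the $\phi_i$, and the fact that on $\Omega_N$ the logarithmic singularities of $G(x_l, x_m)$ for $l \neq m$ and of the Robin function $r$ are avoided), so the lemma is a direct application of standard ODE theory and the proof is short.
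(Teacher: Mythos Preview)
Your proposal is correct and matches the paper's approach: the paper gives no explicit proof of this lemma, merely prefacing it with the observation that $F$ is analytic on $\Omega_N$ and treating existence, uniqueness and analyticity of the solution as an immediate consequence of the classical Cauchy theorem for analytic ODEs. Your write-up simply spells out this standard invocation (Picard--Lindel\"of for local existence/uniqueness, then the analytic bootstrap or Cauchy--Kovalevskaya for $C^\omega$ regularity), which is exactly what the paper is relying on implicitly.
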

Let us now introduce an appropriated  weak vorticity formulation of the Euler equations for multiply connected domains. 
\begin{Definition}
\label{WV}
Let be given $\omega_N$ in the space $ \mathcal{M} (\Omega)$ of the Radon measures on $\Omega$.
We say that $ \omega$ in $L^\infty (\R_+ , \mathcal{M} (\Omega))$ is a weak solution of the Euler equations on $ \lbrack 0,T )$ with  $\omega_0$ as initial vorticity and circulations  $\overline{\Gamma}$ if for any test function    
$\varphi \in  C^\infty_c ( \lbrack 0,T )  \times \Omega ,\R )$, 
\begin{eqnarray}
 \label{WF}
 \int_{\Omega }   \varphi(0,x)  d\omega_0 (x) + 
\int_{ \lbrack 0,T \rbrack} \int_{\Omega } L_{\varphi}  (t,x) \, d \omega (t,x)    dt +   
\int_{ \lbrack 0,T \rbrack} \int_{\Omega } \int_{\Omega } H_{\varphi} (t,x,y) \, d\omega (t,x)  d\omega (t,y)   dt  = 0,
\end{eqnarray}
 where $ L_{\varphi }  $ is the function in $C^\infty_c ( \lbrack 0,T )  \times \Omega ,\R ) $, 
 defined by
\begin{align}
 \label{WV3} L_{\varphi } (t,x) :=    \partial_t \varphi (t,x)  +  \bX_0 (x) \cdot  \nabla_x  \varphi (t,x)   ,\end{align}
and  $H_{\varphi}$ is the auxiliary function:
\begin{align}
  \label{WV4} H_{\varphi} (t,x,y) := 
  \left\{
\begin{array}{ll}
  \frac{1}{2} \Big( \nabla_x  \varphi (t,x) \cdot K (x,y) + \nabla_x  \varphi (t,y) \cdot K (y,x)    \Big) ,  &  \mbox{ for  }   x \neq y  \\
   \frac{1}{2}  \nabla_x  \varphi (t,x) \cdot  \nabla_x^\perp  r (x) , &  \mbox{ for  }   x = y ,
\end{array}
\right.
\end{align}
with
\begin{eqnarray}
K (x,y) :=  \nabla_x^\perp G (x,y) \text{ and }
 \bX_0 (x) := \nabla_x^\perp   \psi_0 (x) ,
 \end{eqnarray}
  where the function $ \psi_0 $ is the one in  \eqref{psi0}.
\end{Definition}
The three terms in  \eqref{WF} makes sense: in particular let us observe that the function 
$H_{\varphi}$ is bounded.

Let us first verify that a smooth solution of the Euler equations is also a weak solution in the sense above.
\begin{Lemma}
Let be given $u_0 \in C^\infty ( \lbrack 0,T \rbrack  \times \Omega  ) \cap C ( \lbrack 0,T \rbrack  \times \overline{\Omega }  )  $ satisfying 
$\text{div }  u_0 =0$ in $\Omega$ and $u_0 \cdot \hat{n} = 0$ on  $\partial \Omega $.
Let $u$ be the unique solution in $C^\infty ( \lbrack 0, + \infty )  \times \Omega  )$ of the Euler equations  \eqref{2DincEuler} (cf.  \cite{kato}).
Then for any $T>0$, $\omega := \curl u$ satisfies the  weak vorticity formulation of Definition \ref{WV} with  $\omega_0 := \curl u_0$ an $ \psi_0 := \sum_{ 1 \leq i,j  \leq d } {\Gamma}_i (u_0) p_{  i,j } \phi_j .$
\end{Lemma}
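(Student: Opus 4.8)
The plan is to reconstruct the velocity from its vorticity by a Biot--Savart formula built on the hydrodynamic Green function of Lemma~\ref{green}, to write the usual distributional form of the vorticity transport equation, and to recognise the quadratic term of \eqref{WF} after symmetrising the kernel $K$.

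\emph{Biot--Savart representation.} For each $t$, the field $u(t,\cdot)$ is smooth, divergence free and tangent to $\partial\Omega$, hence it admits a single-valued stream function, normalised to vanish on the outer boundary $C_0$ and constant on each inner curve $C_l$; by Kelvin's circulation theorem its circulations are $\Gamma_l(u(t,\cdot))=\Gamma_l(u_0)$ for $1\le l\le d$. On the other hand, using $\Delta_x G(x,y)=\delta(x-y)$ together with the properties (i)--(iv) of Lemma~\ref{green} and the identity $\sum_j p_{i,j}m_{l,j}=\delta_{i,l}$ for $M=(m_{i,j})=(\Gamma_i(\nabla^\perp\phi_j))$, one checks that
\begin{equation*}
\psi(t,x):=\int_\Omega G(x,y)\,\omega(t,y)\,dy+\psi_0(x),\qquad \psi_0=\sum_{1\le i,j\le d}\Gamma_i(u_0)\,p_{i,j}\,\phi_j,
\end{equation*}
obeys the same normalisation, is constant on each $C_l$, has circulations $\Gamma_l(\nabla_x^\perp\psi)=\Gamma_l(u_0)$ (the first term carrying none by (iv), the whole circulation being produced by $\psi_0$), and $\Delta\psi=\omega(t,\cdot)=\curl u(t,\cdot)$. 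By the uniqueness of such a stream function, $u(t,\cdot)=\nabla_x^\perp\psi(t,\cdot)$, that is
\begin{equation*}
u(t,x)=\bX_0(x)+\int_\Omega K(x,y)\,\omega(t,y)\,dy ,
\end{equation*}
with $K(x,y)=\nabla_x^\perp G(x,y)$ and $\bX_0=\nabla^\perp\psi_0$ exactly as in Definition~\ref{WV}; the $y$-integral is absolutely convergent since $K(x,\cdot)$ inherits from $G(x,y)=\tfrac1{2\pi}\log\|x-y\|+g(x,y)$, with $g$ smooth, only the integrable singularity $\|x-\cdot\|^{-1}$.

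\emph{Weak form and substitution.} Since $\div u=0$, the classical vorticity equation satisfied by the smooth solution reads $\partial_t\omega+\div_x(u\,\omega)=0$ in $(0,+\infty)\times\Omega$. Testing against $\varphi\in C^\infty_c([0,T)\times\Omega,\R)$ and integrating by parts in $t$ and $x$ — the spatial boundary term vanishing because $\varphi$ is compactly supported in $\Omega$, the term at $t=T$ vanishing because $\varphi$ is supported in $[0,T)$ — gives
\begin{equation*}
\int_\Omega\varphi(0,x)\,\omega_0(x)\,dx+\int_0^T\!\!\int_\Omega\big(\partial_t\varphi(t,x)+u(t,x)\cdot\nabla_x\varphi(t,x)\big)\,\omega(t,x)\,dx\,dt=0 .
\end{equation*}
Plugging in the Biot--Savart formula and using Fubini (licit because $\omega(t,\cdot)\in L^\infty$ as the transport of the bounded $\omega_0$, $\Omega$ is bounded, and $K\in L^1_{\mathrm{loc}}$), the term $u\cdot\nabla_x\varphi$ splits into $\bX_0\cdot\nabla_x\varphi$, which joins $\partial_t\varphi$ to form $L_\varphi(t,x)$, and a double integral $\int_\Omega\big(\nabla_x\varphi(t,x)\cdot K(x,y)\big)\omega(t,y)\,dy$. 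Relabelling $x\leftrightarrow y$ and averaging,
\begin{equation*}
\int_\Omega\!\!\int_\Omega \nabla_x\varphi(t,x)\cdot K(x,y)\,\omega(t,x)\omega(t,y)\,dx\,dy=\int_\Omega\!\!\int_\Omega \tfrac12\big(\nabla_x\varphi(t,x)\cdot K(x,y)+\nabla_x\varphi(t,y)\cdot K(y,x)\big)\omega(t,x)\omega(t,y)\,dx\,dy ,
\end{equation*}
and the symmetric kernel on the right agrees with $H_\varphi(t,x,y)$ away from the diagonal $\{x=y\}$, which is negligible for the measure $\omega(t,\cdot)\,dx\otimes\omega(t,\cdot)\,dy$ (absolutely continuous with respect to Lebesgue measure on $\Omega\times\Omega$ since $\omega(t,\cdot)$ is bounded, the diagonal being Lebesgue-null); so the diagonal value of $H_\varphi$ is irrelevant here and the double integral equals $\int\!\!\int H_\varphi(t,x,y)\,d\omega(t,x)\,d\omega(t,y)$. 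Integrating in $t$ and collecting the three terms is precisely \eqref{WF}.

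\emph{Main obstacle.} The delicate point is the manipulation of the double integral: one must check that the symmetrised kernel $\tfrac12\big(\nabla_x\varphi(t,x)\cdot K(x,y)+\nabla_x\varphi(t,y)\cdot K(y,x)\big)$ is not merely locally integrable but bounded across the diagonal, the leading singular parts $\tfrac1{4\pi}\big(\nabla_x\varphi(t,x)-\nabla_x\varphi(t,y)\big)\cdot\tfrac{(x-y)^\perp}{\|x-y\|^2}$ cancelling to second order in $\|x-y\|$ thanks to the Taylor expansion of $\nabla_x\varphi$ and the orthogonality $(x-y)^\perp\cdot(x-y)=0$. This is exactly the boundedness of $H_\varphi$ recorded just after Definition~\ref{WV}, and it is what makes Fubini and the passage through the symmetric form rigorous.
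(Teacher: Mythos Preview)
Your proof is correct and follows essentially the same route as the paper: derive the Biot--Savart representation $u=\bX_0+K[\omega]$, integrate the vorticity transport equation against a test function, substitute, symmetrise the kernel, and observe that the diagonal is Lebesgue-null so the prescribed diagonal value of $H_\varphi$ is immaterial. The paper packages the Biot--Savart step as a separate lemma while you derive it inline via uniqueness of the stream function, and your closing paragraph on the boundedness of the symmetrised kernel is a welcome extra justification that the paper only alludes to.
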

\begin{proof}
 We start with the vorticity formulation of the Euler equations:
\be \label{vorteq}
\partial_t \omega +  \mbox{ div } ( \omega u)= 0 .
\ee
We consider $T>0$ and we multiply by a test function    
$\varphi \in  C^\infty_c ( \lbrack 0,T )  \times \Omega ,\R )$, and integrate by parts over $ \lbrack 0,T \rbrack \times \Omega$ to get
\begin{eqnarray}
 \label{WFpo}
 \int_{\Omega }   \varphi(0,x)  \omega_0 (x) dx+ 
\int_{ \lbrack 0,T \rbrack} \int_{\Omega } \partial_t {\varphi}  (t,x) \,  \omega (t,x) dx   dt +   
\int_{ \lbrack 0,T \rbrack} \int_{\Omega } \nabla_x  {\varphi}  (t,x) \, \omega (t,x) u(t,x) dx dt  = 0 ,
\end{eqnarray}
where $\omega_0$ is the initial value of $\omega$.
Now the velocity can be recovered from the vorticity by using   Lemma \ref{green}.
More precisely we have the following.

\begin{Lemma}
Let be given $\omega \in C^\infty_c ( \Omega  )$ and some real $ \overline{\Gamma}_i$, for $ 1 \leq i  \leq d$. Then there exists a unique $u \in C^\infty( \Omega  ) \cap C ( \overline{\Omega }  )  $ such that 
\be 
\left\{
\begin{array}{ll}
\text{curl }  u =  \omega, &   \mbox{ in }  \Omega,\\
\text{div }  u =0, &   \mbox{ in }  \Omega,\\
u \cdot \hat{n} = 0,  &  \text{ on }  \partial \Omega ,\\
\Gamma_i (u) = \overline{\Gamma}_i , &   \mbox{ for }  i=1,...,d .
\end{array}
\right.
\ee
Moreover  $u =   \bX_0 + K [ \omega] $,
where $ \bX_0$ is as in 
Definition \ref{WV} and 
\begin{eqnarray}
K [ \omega] (x) :=  \int_{\Omega} K (x,y) \omega (y) dy.
\end{eqnarray}
\end{Lemma}
As a consequence we infer  that
\[\int_{\Omega} \varphi(0,x) \omega_0 (x)\, dx + 
\int_{ \lbrack 0,T \rbrack} \int_{\Omega } L_{\varphi}  (t,x) \, d \omega (t,x)    dt + 
\int_{ \lbrack 0,T \rbrack} \int_{\Omega} \nabla \varphi (t,x) \cdot K [\omega] (t,x) 
     \omega(t,x) \,dxdt = 0,
\]
Now by substituting $K [\omega]$ for its integral expression and subsequently symmetrizing the kernel in the nonlinear term above, we get  \eqref{WF} with
\begin{eqnarray}
\label{RRR}
  \frac{1}{2} \Big( \nabla_x  \varphi (t,x) \cdot K (x,y) + \nabla_x  \varphi (t,y) \cdot K (y,x)    \Big) 
  \end{eqnarray}
  instead of $H_{\varphi} (t,x,y)$.
  Since the integrand is in $L^1 (\lbrack 0,T \rbrack \times \Omega \times \Omega )$, modifying \eqref{RRR} for $H_{\varphi}$ does not modify the value of the integral, so that  for any $T>0$, $\omega := \curl u$ satisfies the  weak vorticity formulation of Definition \ref{WV}.
\end{proof}

Let us now start the proof of Theorem  \ref{lin}: we consider    the  trajectories  $z(t) :=(z_1 (t),...,z_N (t))$ on $\lbrack 0,T \rbrack$ of $N$ isolated point vortices  of respective strength $\alpha_l \in \R^*$, for $1 \leqslant  l \leqslant  N$,
given by Lemma \ref{def?}. We denote by $\omega $ the following function  with measure-values 
\begin{eqnarray*}
t \mapsto  \sum_{1 \leqslant l  \leqslant N}  \alpha_l \delta_{z_l (t) } .
\end{eqnarray*}
Let us consider a test function    
$\varphi \in  C^\infty_c ( \lbrack 0,T )  \times \Omega ,\R )$.
Thanks to the chain rule, we have for any $t \in  \lbrack 0,T \rbrack$, for any $1 \leqslant l  \leqslant N$,
\begin{eqnarray*}
\partial_t ( \varphi (t, z_l (t)) ) =  (\partial_t  \varphi ) (t, z_l (t) )  + z'_l (t) \cdot  \nabla_x  \varphi (t, z_l (t) ) .
\end{eqnarray*}
Let us observe that  \eqref{motion1} amounts to the equations
\begin{eqnarray*}
 z'_l (t) =  \bX_0  (z_l (t)) +  \frac{\alpha_l }{2}  \nabla_x^\perp r (z_l (t)) + \sum_{m \neq l} \alpha_m  \nabla_x^\perp G (z_l (t) , z_m (t)) .
\end{eqnarray*}
Therefore
\begin{eqnarray*}
\partial_t ( \varphi (t, z_l (t)) ) =  L_{\varphi}  (t, z_l (t) ) +    \frac{\alpha_l }{2} \nabla_x^\perp r (z_l (t)) \cdot  \nabla_x  \varphi (t, z_l (t) ) + 
 \sum_{m \neq l}    \alpha_m    \nabla_x^\perp G (z_l (t) , z_m (t)) 
\cdot  \nabla_x  \varphi (t, z_l (t) )  .
\end{eqnarray*}
Now integrate on $ \lbrack 0,T \rbrack$, multiply by $ \alpha_l$ and sum over $1 \leqslant l  \leqslant N$ to get 
\begin{eqnarray*}
0 =   \sum_{1 \leq l  \leqslant N}  \alpha_l \varphi (0,x_l ) 
+ \sum_{1  \leqslant  l  \leqslant N}  \alpha_l
 \int_{ \lbrack 0,T \rbrack }  L_{\varphi}  (t, z_l (t) ) dt
\\ + 
 \int_{ \lbrack 0,T \rbrack } \Big(   \frac{1}{2}  \sum_{1  \leqslant  l  \leqslant N}  \alpha_l^2  \nabla_x^\perp r    (z_l (t) ) 
 + \sum_{1  \leqslant  l \neq m  \leqslant N}  \alpha_l  \alpha_m  \nabla_x^\perp G (z_l (t) , z_m (t)) \Big) \cdot
 \nabla_x \varphi (z_l (t) ) dt ,
\end{eqnarray*}
what, after symmetrizing the last sum, amouts to say that
 $\omega$ is  
a weak solution of the Euler equation with $\omega_0 := \sum_{1 \leqslant l  \leqslant N}  \alpha_l \delta_{x_l} $ as initial data and $\overline{\Gamma}_i $ as repective  circulation  around the curve $C_i$, for $1 \leqslant  i \leqslant  d$.

\section*{APPENDIX C}

The goal of this appendix is to provide a proof of the statement below Theorem \ref{Dmodifie} about analyticity of  the flow  of  classical solutions, whose vorticity is constant in a neighborhood of the boundary, which is only assumed to be $C^2$.
To be more general we assume here that the fluid fills a bounded regular domain $\Omega \subset \R^d$, with $d=2$ or $3$. 
We denote,  for $\lambda$ in $\N$ and $r \in (0,1)$, the space ${C^{\lambda,r}_\sigma ( \Omega ) }$ of divergence free vector fields $u$ in $C^{\lambda,r}  ( \Omega  )$  tangent to the boundary. 
\begin{Theorem} 
\label{DmodifieHolder}
Assume that the boundary  $\partial \Omega$ is $C^2$. 
Assume that $u_0$ in  $C^{\lambda+1,r}_\sigma  ( \Omega  )$, where  $\lambda$ in $\N$ and $r \in (0,1)$.
Assume that $\omega_0 := \curl u_0$ is constant outside of a compact $\underline{K} \subset \Omega $.
Then for any compact $K \subset \Omega $ the flow  $ \Phi$  is in the space $C^\omega  (\lbrack 0, T  \rbrack  ,C^{\lambda + 1 ,r} ( K ))$ of real analytic functions from $\lbrack 0, T  \rbrack$  to $C^{\lambda + 1 ,r} ( K )$.
\end{Theorem}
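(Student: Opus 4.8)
\emph{Reduction and regularisation.} The plan is to reduce the statement to a uniform analytic bound on the iterated material derivatives of $u$ over a fixed compact. Differentiating~\eqref{flow} gives $\partial_t^{k+1}\Phi(t,x)=(D^ku)(t,\Phi(t,x))$ with $D:=\partial_t+u\cdot\nabla$; since each $\Phi(t,\cdot)$ is a homeomorphism of $\overline\Omega$ mapping $\partial\Omega$ onto $\partial\Omega$, the set $\tilde K:=\Phi([0,T]\times K)$ is a compact subset of $\Omega$, and Lemma~\ref{trivcomp} reduces the theorem to the following: there is $L>0$ such that $\|D^ku(t,\cdot)\|_{C^{\lambda+1,r}(\tilde K)}\leq L^{k+1}k!$ for all $t\in[0,T]$ and $k\in\N$ (this is precisely the membership $\Phi\in C^\omega([0,T];C^{\lambda+1,r}(K))$). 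As in Section~\ref{proof} I would first regularise, so as to work on a smooth flow with all bounds uniform in the regularisation parameter.

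\emph{The cut-off.} Because $\omega_0$ is constant outside $\underline K$ and $\Phi([0,T]\times\underline K)$ is a compact subset $\underline K_*\subset\Omega$, the vorticity $\omega(t,\cdot)$ is constant on $\Omega\setminus\underline K_*$ for every $t\in[0,T]$. I would fix $\chi\in C^\infty(\Omega;[0,1])$ equal to $1$ on a neighbourhood $U$ of $\underline K_*\cup\tilde K$, vanishing near $\partial\Omega$, and such that $S:=\operatorname{supp}\nabla\chi$ lies at positive distance from both $\partial\Omega$ and $\underline K_*$. Two facts will be used constantly, with $D_\chi:=\partial_t+\chi u\cdot\nabla$ as in Section~\ref{sDmodifie}: (i) $D_\chi^ju=D^ju$ on $[0,T]\times U$ for every $j$, by induction on $j$ (using that $U$ is open and $\chi\equiv1$ there); (ii) $D_\chi\curl u=(\chi-1)u\cdot\nabla\omega=0$ on all of $\Omega$, since on $\{\chi\neq1\}$ one is outside $\underline K_*$, where $\nabla\omega=0$.

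\emph{The two recursions.} I would then run the scheme of Section~\ref{sDmodifie}, keeping everything in \emph{fixed} Hölder and Sobolev scales — this is what removes the loss present in Theorem~\ref{Dmodifie}. Globally: the cut-off versions of Lemma~\ref{triv} and Lemma~\ref{triv2} (and their $d=3$ analogues) apply because $\hat n\cdot D_\chi^ku=0$ on $\partial\Omega$ — so only $\nabla\rho$ and $\nabla^2\rho$, i.e.\ the $C^2$ regularity of $\partial\Omega$, intervene — and $\Gamma_i(D_\chi^ku)=0$ for $k\geq1$; by~\eqref{t3.0}--\eqref{t3.3} together with~(ii), the quantities $\div D_\chi^ku$, $\curl D_\chi^ku$ and $\Pi D_\chi^ku$ are sums of products of the $D_\chi^ju$, $j<k$, times derivatives of $\chi$, and splitting such products with one factor in $L^p$ and the others in $L^\infty$ through $W^{1,p}\hookrightarrow L^\infty$ — for a single $p$ depending only on $d$ and $r$, with \emph{no} $p\to+\infty$ — closes a recursion for $\|D_\chi^ku(t,\cdot)\|_{W^{1,p}(\Omega)}$. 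Locally: on $U$ one has $D_\chi^ku=D^ku$ and, by Lemma~\ref{P1}, $\curl D^ku=\as\{G^k[u]\}$, $\div D^ku=\trace\{F^k[u]\}$ are products of the $\nabla D^{\alpha_i}u=\nabla D_\chi^{\alpha_i}u$ with $\alpha_i<k$; iterating Lemma~\ref{LemmeInterior} over $\lambda$ on compacts $\tilde K\subset\subset U$, bounding the base term via the global $W^{1,p}$ bound and Morrey's inequality, and handling the combinatorics with Lemma~\ref{LemmeCheminSMF}, upgrades this to a bound on $\|D^ku(t,\cdot)\|_{C^{\lambda+1,r}(\tilde K)}$. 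Since products cost nothing in Hölder norms, only the combinatorial $k!$ survives, which is exactly the analytic bound sought; the first step then closes the proof, and the argument for $d=3$ is identical.

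\emph{The main obstacle.} The delicate point is the derivatives of $\chi$ entering the global recursion: a priori they would cap the Gevrey order of the final bound at the Gevrey order of $\chi$, which can never be $1$. What saves the argument is that every $\chi$-derivative term is supported in $S=\operatorname{supp}\nabla\chi$, which — for all $t\in[0,T]$ — lies inside the region where $\omega(t,\cdot)$ is constant and at positive distance from $\partial\Omega$; there $u(t,\cdot)$, and, through the Euler equations together with interior elliptic regularity for the pressure, its iterated material derivatives, are real-analytic in $x$ with bounds uniform in $t$. Transferring the derivatives of $\chi$ onto this analytic factor — by integration by parts, or by running the recursion against a telescoping family of cut-offs of width $\sim 1/k$ supported in $S$ — restores the analytic book-keeping. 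Making this transfer quantitative, uniformly in $t$ and compatibly with the recursion on the $D_\chi^ku$, is where the real work of the proof lies.
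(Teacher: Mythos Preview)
Your proposal correctly identifies the architecture of the argument and the central obstacle, but the way you resolve that obstacle is where it diverges from the paper, and where the gap lies.

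You work with a \emph{fixed-in-time} smooth cut-off $\chi$, as in Section~\ref{sDmodifie}. You are right that this caps the attainable Gevrey order at the Gevrey order of $\chi$, and that a compactly supported $\chi$ can never be analytic. Your proposed fix --- transferring derivatives of $\chi$ onto the locally analytic factor $u$ by integration by parts or by a telescoping family of cut-offs of width $\sim 1/k$ --- is not a proof as stated. The derivatives of $\chi$ enter the recursion \emph{pointwise}, inside the very definition of $D_\chi^k u$; there is no integral against which to integrate by parts, and replacing $\chi$ by a $k$-dependent cut-off changes the meaning of $D_\chi^k u$ itself, so a telescoping scheme does not close in any obvious way. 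You concede as much in your last sentence; that concession is the gap.

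The paper avoids the obstacle altogether by a different choice of cut-off: it takes $\chi_0\in C_c^{\lambda+1,r}(\Omega)$ with $\chi_0\equiv 1$ on $\underline K\cup K$ and lets $\chi$ evolve by $D_\chi\chi=0$, $\chi|_{t=0}=\chi_0$. Because $D_\chi\chi=0$, the cut-off is \emph{never} hit by $D_\chi$ during the iteration; in the resulting formulas for $\div D_\chi^k u$ and $\curl D_\chi^k u$ (the analogues of Lemma~\ref{P1}), each factor $\chi$ carries at most one spatial gradient, and the combinatorics are exactly those of the $\chi\equiv 1$ case. No high-order smoothness of $\chi$ is needed --- $C^{\lambda+1,r}$ suffices --- and the recursion closes directly in H\"older norms with the pure $k!$ bound. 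The treatment of the curl uses, as you observed, that $D_\chi\curl u=D\curl u$ and hence $\curl D_\chi u=\as\{(\nabla((\chi-1)u))\cdot\nabla u\}$; iterating yields products in which exactly one factor carries $\chi-1$, the rest $\chi$. The key idea you are missing is thus to \emph{transport} the cut-off rather than to fix it.
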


\begin{proof}
Let us introduce a function $ \chi_0$ in $C_c^{\lambda + 1 ,r} ( \Omega )$   such that  $ \chi_0 |_{\underline{K} \cup K }= 1$. There exists $T>0$ and  only one $ \chi$ in $L^{\infty}  (  \lbrack 0, T  \rbrack  ,C_c^{\lambda + 1 ,r} ( \Omega ))$  such that 
\begin{equation*}
D_{\chi} \chi = 0 ,
\quad  \chi |_{ t=0 } =  \chi_0 , \quad  \text{ where } D_{\chi} := \partial_{t} + \chi u.\nabla .
\end{equation*}
As in the proof of Theorem \ref{start22localh}, we have,  for $k$ in $\N^*$, 
$\hat{n} \cdot D_\chi^k u = 0 $ on the boundary $ \partial \Omega$,
and  $ \Gamma_i ( D_\chi^k u) = 0$, for $1\leq i \leq d$.
Moreover we obtain, by iteration, using the identities  \eqref{t3.0}-\eqref{t3.3},  that for  $k \in \N^*$,  in $\Omega$
\begin{eqnarray*}
\div D_{\chi}^k u=\trace\left\{F_{\chi}^k [u]\right\} \text{ where } 
F_{\chi}^k [u] := \sum_{\theta   \in \mathcal{A}_{k}  } d^1_k (\theta  )  \,  f_{\chi} (\theta)  [u], 
\end{eqnarray*}
where
\begin{gather} 
f_{\chi} ( \theta)  [u] : = \nabla \chi D^{\alpha_1} u \cdot \ldots \cdot  \nabla   \chi  D^{\alpha_{s-1} } u  \cdot \nabla D^{\alpha_s} u ,
\end{gather}
and where the $d^1_k (\theta )$ are integers satisfying  $ |d^1_k ( \theta) | \leqslant \frac{k ! }{\alpha ! }$.
As in the proof of Theorem \ref{start22localh} the assumption that the vorticity is constant near the boundary is useful to tackle the $\curl D_{\chi}^k u$. For instance, we have that, in $\Omega$, 
\begin{eqnarray}
D_{\chi} \curl u = D  \curl u .
\end{eqnarray}
Moreover, using the  identities  \eqref{t3.0}-\eqref{t3.3}, we get 
\begin{eqnarray*}
\curl D_{\chi} u = D_{\chi} \curl u +  \as \left\{(\nabla  (\chi u))\cdot(\nabla u)  \right\} .
\end{eqnarray*}
Since we also that 
\begin{eqnarray*}
0 = \curl D u = D \curl u +  \as \left\{(\nabla  u)\cdot(\nabla u)  \right\} ,
\end{eqnarray*}
we infer that 
\begin{eqnarray*}
\curl D_{\chi} u =  \as \left\{(\nabla  ( (\chi - 1 )u) )\cdot( \nabla u)  \right\} .
\end{eqnarray*}
Then proceeding by iteration, and using the  identities  \eqref{t3.0}-\eqref{t3.3} and $ D_{\chi} \chi = 0$, we obtain that   $k \in \N^*$,  in $\Omega$,
\begin{eqnarray*}
\curl D_{\chi}^k u=\trace\left\{G_{\chi}^k [u]\right\} \text{ where } 
G_{\chi}^k [u] := \sum_{ \tilde{\theta}   \in \tilde{\mathcal{A}}_{k}  } d^2_k ( \tilde{\theta}   )  \,  g_{\chi} ( \tilde{\theta} )  [u], 
\end{eqnarray*}
where $\tilde{\mathcal{A}}_{k}$ denotes the set
\begin{equation*}
\tilde{\mathcal{A}}_{k} := \{ (s, \eps ,  \alpha ) /  \ 2 \leqslant s  \leqslant k+1 ,   \ \eps   \in  \{0,1 \} ^{s-1} \text{ with } | \eps |  = 1 , 
 \text{ and }
\alpha := ( \alpha_1,\ldots, \alpha_s )  \in \N^s / \ | \alpha |    = k+1 - s \},
\end{equation*}
for $\tilde{\theta} = (s, \eps ,  \alpha )   \in \tilde{\mathcal{A}}_{k}$, $g_{\chi} ( \tilde{\theta}  )  [u]$ denotes
\begin{gather}
g_{\chi} ( \tilde{\theta}  )  [u] : = \nabla \big( ( \chi - \eps_1 ) D_{\chi}^{\alpha_1} u \big) \cdot \ldots \cdot  \nabla    \big( ( \chi - \eps_{s-1} )  D_{\chi}^{\alpha_{s-1} } u )  \cdot \nabla D_{\chi}^{\alpha_s} u ,
\end{gather}
and where the $d^2_k ( \tilde{\theta}  )$ are integers satisfying  $ |d^2_k (  \tilde{\theta} ) | \leqslant \frac{k ! }{\alpha ! }$.

Then we estimate  recursively the $D_\chi^k u$, for $k$ in $\N^*$, thanks to classical elliptic estimates in H\"older spaces, as in the proof of Theorem $2$ of  \cite{ogfstt}. 

\end{proof}

\section{Acknowlegment }

I warmly thank O. Glass, J. Hulshof and J. Kelliher for useful discussions.

\end{document}